\newcommand{\PP}{\mathbb{P}}
\newcommand{\QQ}{\mathbb{Q}}
\newcommand{\NN}{\mathbb{N}}
\newcommand{\RR}{\mathbb{R}}
\newcommand{\E}{\mathbb{E}}
\newcommand{\EE}{\mathbb{E}}
\DeclareMathOperator*{\Var}{\mathrm{Var}}
\newcommand{\Adv}{\mathsf{Adv}}
\newcommand{\Corr}{\mathsf{Corr}}
\newcommand{\NCorr}{\mathsf{Corr'}}
\newcommand{\ER}{Erd\H{o}s--R\'{e}nyi }
\newcommand{\ERend}{Erd\H{o}s--R\'{e}nyi}
\newcommand{\sY}{R}
\newcommand{\tr}{\widetilde{r}}
\newcommand{\tX}{\widetilde{X}}
\newcommand{\eps}{\varepsilon}
\newcommand{\Ntri}{N^{\rm tri}}
\newcommand{\mono}{C_c}
\newtheorem{theorem}{Theorem}
\newtheorem{remark}[theorem]{Remark}
\newtheorem{lemma}[theorem]{Lemma}
\newtheorem{conjecture}[theorem]{Conjecture}
\newtheorem{definition}[theorem]{Definition}
\newtheorem{proposition}[theorem]{Proposition}
\numberwithin{theorem}{section}
\numberwithin{equation}{section}
\numberwithin{figure}{section}
\newenvironment{fminipage}%
  {\begin{Sbox}\begin{minipage}}%
  {\end{minipage}\end{Sbox}\fbox{\TheSbox}}
\newenvironment{algbox}[0]{\vskip 0.2in
\noindent 
\begin{fminipage}{6.3in}
}{
\end{fminipage}
\vskip 0.2in
}
\newcommand{\edge}{
  \begin{tikzpicture}[baseline=-0.6ex,scale=0.25]
  \tikzstyle{vertex}=[circle,fill=black, minimum size=2pt,inner sep=1pt]
  \node[vertex] (v1) at (0, -0.5){};
  \node[vertex] (v2) at (0,0.5){};
  \draw (v1)--(v2);
  \end{tikzpicture}}
\newcommand{\edgedotted}{
  \begin{tikzpicture}[baseline=-0.6ex,scale=0.25]
  \tikzstyle{vertex}=[circle,fill=black, minimum size=2pt,inner sep=1pt]
  \node[vertex] (v1) at (0, -0.5){};
  \node[vertex] (v2) at (0,0.5){};
  \draw[densely dotted, line width=0.4mm] (v1)--(v2);
  \end{tikzpicture}}
\newcommand{\trian}{
  \begin{tikzpicture}[baseline=-0.3ex,scale=0.25]
  \tikzstyle{vertex}=[circle,fill=black, minimum size=2pt,inner sep=1pt]
  \node[vertex] (v1) at (-0.5, 0){};
  \node[vertex] (v2) at (0.5,0){};
  \node[vertex] (v3) at (0,0.8){};
  \draw (v1)--(v2)--(v3)--(v1);
  \end{tikzpicture}}
\newcommand{\gbowtiecol}{
  \begin{tikzpicture}[baseline=-0.3ex,scale=0.25]
  \tikzstyle{vertex}=[circle,fill=black, minimum size=2pt,inner sep=1pt]
  \node[vertex] (vL1) at (-0.8, 1){};
  \node[vertex] (vL2) at (-0.8,0){};
  \node[vertex] (vc) at (0,0.5){};
  \node[vertex] (vR1) at (0.8,1){};
  \node[vertex] (vR2) at (0.8,0){};
  \draw[color=gray] (vc)--(vL1)--(vL2)--(vc);
  \draw[color=pink] (vc)--(vR1)--(vR2)--(vc);
  \end{tikzpicture}}
\newcommand{\hedge}{
  \begin{tikzpicture}[baseline=-0.3ex,scale=0.25]
  \tikzstyle{vertex}=[circle,fill=black, minimum size=2pt,inner sep=1pt]
  \node[vertex] (v1) at (-0.5, 0){};
  \node[vertex] (v2) at (0.5,0){};
  \draw (v1)--(v2);
  \end{tikzpicture}}
\newcommand{\dumbedge}{
  \begin{tikzpicture}[baseline=-0.3ex,scale=0.25]
  \tikzstyle{vertex}=[circle,fill=black, minimum size=2pt,inner sep=1pt]
  \node[vertex] (v1) at (-0.5, 0){};
  \node[vertex] (v2) at (0.5,0){};
  \node[vertex] (v3) at (0,0.8){};
  \draw (v1)--(v2);
  \end{tikzpicture}}
\newcommand{\dtrian}{
  \begin{tikzpicture}[baseline=-0.3ex,scale=0.25]
  \tikzstyle{vertex}=[circle,fill=black, minimum size=2pt,inner sep=1pt]
  \node[vertex] (v1) at (-0.5, 0){};
  \node[vertex] (v2) at (0.5,0){};
  \node[vertex] (v3) at (0,0.8){};
  \draw (v3) edge[bend left] (v1);
  \draw (v3) edge[bend right] (v1);
  \draw (v3)--(v2)--(v1);
  \end{tikzpicture}}
\newcommand{\dlopsidedlollipop}{
  \begin{tikzpicture}[baseline=-0.3ex,scale=0.25]
  \tikzstyle{vertex}=[circle,fill=black, minimum size=2pt,inner sep=1pt]
  \node[vertex] (v1) at (-0.5, 0.2){};
  \node[vertex] (v2) at (0.5,0.2){};
  \draw (v1) to [out=180,in=90,looseness=8] (v1);
  \draw (v2) -- (v1);
  \end{tikzpicture}}
\newcommand{\dddtriancol}{
  \begin{tikzpicture}[baseline=-0.3ex,scale=0.25]
  \tikzstyle{vertex}=[circle,fill=black, minimum size=2pt,inner sep=1pt]
  \node[vertex] (v1) at (-0.625, 0){};
  \node[vertex] (v2) at (0.625,0){}; 
  \node[vertex] (v3) at (0,1){};
  \draw (v1) edge[bend left, color=gray] (v2);
  \draw (v1) edge[bend right, color=pink] (v2);
  \draw (v1) edge[bend left, color=pink] (v3);
  \draw (v1) edge[bend right, color=gray] (v3);
  \draw (v2) edge[bend left, color=gray] (v3);
  \draw (v2) edge[bend right, color=pink] (v3);
  \end{tikzpicture}}
\newcommand{\dkitecol}{
  \begin{tikzpicture}[baseline=-0.3ex,scale=0.25]
  \tikzstyle{vertex}=[circle,fill=black, minimum size=2pt,inner sep=1pt]
  \node[vertex] (v1) at (0, 1){};
  \node[vertex] (v2) at (0,0){};
  \node[vertex] (vL) at (-0.8,0.5){};
  \node[vertex] (vR) at (0.8,0.5){};
  \draw[color=gray] (v2) -- (vL) -- (v1);
  \draw (v2) edge[bend left, color=gray] (v1);
  \draw (v2) edge[bend right, color=pink] (v1);
  \draw[color=pink] (v1) -- (vR) -- (v2);
  \end{tikzpicture}}
  \newcommand{\dcherrya}{
  \begin{tikzpicture}[baseline=-0.3ex,scale=0.25]
  \tikzstyle{vertex}=[circle,fill=black, minimum size=2pt,inner sep=1pt]
  \node[vertex] (v1) at (-0.5, 0){};
  \node[vertex] (v2) at (0.5,0){};
  \node[vertex] (v3) at (0,0.8){};
  \draw (v3) edge[bend left] (v1);
  \draw (v3)--(v2);
  \end{tikzpicture}}
  \newcommand{\dcherryb}{
  \begin{tikzpicture}[baseline=-0.3ex,scale=0.25]
  \tikzstyle{vertex}=[circle,fill=black, minimum size=2pt,inner sep=1pt]
  \node[vertex] (v1) at (-0.5, 0){};
  \node[vertex] (v2) at (0.5,0){};
  \node[vertex] (v3) at (0,0.8){};
  \draw (v3) edge[bend right] (v1);
  \draw (v3)--(v2);
  \end{tikzpicture}}
\newcommand{\cherry}{
  \begin{tikzpicture}[baseline=-0.3ex,scale=0.25]
  \tikzstyle{vertex}=[circle,fill=black, minimum size=2pt,inner sep=1pt]
  \node[vertex] (v1) at (-0.5, 0){};
  \node[vertex] (v2) at (0.5,0){};
  \node[vertex] (v3) at (0,0.8){};
  \draw (v1)--(v3)--(v2);
  \end{tikzpicture}}
\newcommand{\sidecherry}{
  \begin{tikzpicture}[baseline=-0.3ex,scale=0.25]
  \tikzstyle{vertex}=[circle,fill=black, minimum size=2pt,inner sep=1pt]
  \node[vertex] (v1) at (-0.5, 0){};
  \node[vertex] (v2) at (0.5,0){};
  \node[vertex] (v3) at (0,0.8){};
  \draw (v3)--(v1)--(v2);
  \end{tikzpicture}}
\title{ 
Is it easier to count communities than find them?\footnotemark
}
\begin{document}

\author[1]{Cynthia Rush\thanks{Email: \textit{cynthia.rush@dcolumbia.edu}. Part of this work was supported by NSF CCF $\#1849883$ and part of the work was done while visiting the Simons Institute for the Theory of Computing, supported by a Google Research Fellowship.}}

\author[2]{Fiona Skerman\thanks{Email: \textit{fiona.skerman@math.uu.se}. Partially supported by the Wallenberg AI, Autonomous Systems and Software Program WASP and the project AI4Research at Uppsala University. Part of this work was done while visiting the Simons Institute for the Theory of Computing, supported by a Simons-Berkeley Research Fellowship.}}

\author[3]{Alexander S.\ Wein\thanks{Email: \textit{aswein@ucdavis.edu}. Partially supported by a Sloan Research Fellowship and NSF CAREER Award CCF-2338091. Part of this work was done at Georgia Tech, supported by NSF grants CCF-2007443 and CCF-2106444. Part of this work was done while visiting the Simons Institute for the Theory of Computing, supported by a Simons-Berkeley Research Fellowship.}}

\author[4]{Dana Yang\thanks{Email: \textit{dana.yang@cornell.edu}. Part of this work was done while visiting the Simons Institute for the Theory of Computing, supported by a Simons-Berkeley Research Fellowship.}}
%
\affil[1]{Department of Statistics, Columbia University.}
\affil[2]{Department of Mathematics, Uppsala University.}
\affil[3]{Department of Mathematics, University of California, Davis.}
\affil[4]{Department of Statistics and Data Science, Cornell University.}

\maketitle

\footnote{This is an extended version of a conference paper in ITCS. See Section~\ref{sec:new} for the new contributions of this version.}

\begin{abstract}
Random graph models with community structure have been studied extensively in the literature. For both the problems of detecting and recovering community structure, an interesting landscape of statistical and computational phase transitions has emerged. A natural unanswered question is: might it be possible to infer properties of the community structure (for instance, the number and sizes of communities) even in situations where actually finding those communities is believed to be computationally hard? We show the answer is no. In particular, we consider certain hypothesis testing problems between models with different community structures, and we show (in the low-degree polynomial framework) that testing between two options is as hard as finding the communities.

Our methods give the first computational lower bounds for testing between two different ``planted'' distributions, whereas previous results have considered testing between a planted distribution and an i.i.d.\ ``null'' distribution. We also show a formal relationship between the low--degree frameworks for recovery in a planted model and for testing two planted models.\\
\vspace{10mm}
\end{abstract}


\needspace{3\baselineskip}
\section{Introduction}

Questions of detecting and recovering community structure in random graph models have been studied extensively in the literature. Popular models include the \emph{planted dense subgraph model}~\cite{ACV,HWX-comp}, where an \ER base graph is augmented by adding one or more ``communities'' --- subsets of vertices with a higher-than-average connection probability between them --- and the \emph{stochastic block model} (see~\cite{abbe-survey,moore-survey} for a survey). There are by now a multitude of results identifying sharp conditions based on the problem parameters, e.g.\ edge probabilities and number/sizes of communities, under which it is possible (or impossible) to recover (exactly or approximately) the hidden partition of vertices, given a realization of the graph as input. Notably, many settings are believed to exhibit a \emph{statistical-computational gap}; that is, there exists a ``possible but hard'' regime of parameters where it is \emph{statistically} possible to recover the communities (typically by brute-force search) but there is no known \emph{computationally efficient}, meaning polynomial-time, algorithm for doing so. It may be that this hardness is inherent, meaning no poly-time algorithm exists, which is suggested by a growing body of ``rigorous evidence'' including reductions from the \emph{planted clique} problem~\cite{BBH-reduction,HWX-comp} and limitations of known classes of algorithms~\cite{local-stats,sbm-phys,HS-bayesian,SW-estimation}.

Despite all this progress, one question that remains relatively unexplored is the following: in the aforementioned ``hard'' regime, even though it seems hard to recover the communities, might it still be possible to learn \emph{something} about the community structure (e.g., the number or sizes of communities)? After all, in some models it has already been established that \emph{detecting} the presence of a dense subgraph (i.e., distinguishing the planted subgraph model from an appropriate \ER ``null'' model) appears to be strictly easier than actually recovering which vertices belong to it~\cite{BBH-reduction,CX-tradeoffs,HWX-comp,SW-estimation}. Existing detection-recovery gaps of this nature often occur due to a ``trivial'' test for detection (e.g., the total edge count), and the motivation for our work is to understand more precisely which properties of the community structure can be inferred in the hard regime, and which ones cannot.

\paragraph{A simple testing problem.}
One of the simplest inference tasks on the community structure is to detect the number of communities. Let us consider a toy problem of testing between two graph models: under $\mathbb{P}$ the graph contains one community of expected size $k$, while under $\mathbb{Q}$ the graph contains two communities each of expected size $k/2$. The community membership of each vertex is independent in both models ($k/n$ under $\mathbb{P}$ and $k/(2n)$, $k/(2n)$ under $\mathbb{Q}$) and vertices cannot be members of more than one community. Suppose any pair of vertices from the same community are connected independently with probability $2q$ and $3q$ under $\mathbb{P}$ and $\mathbb{Q}$, respectively, and all the other pairs of vertices are connected independently with probability $q$ under both models. Such a parameterization matches the expected degrees of the nodes under the two distributions, so that a simple test based on the total edge count fails to distinguish between $\mathbb{P}$ and $\mathbb{Q}$. One natural test is to threshold the number of triangles. It is easy to derive that the expected number of triangles under $\mathbb{P}$ and $\mathbb{Q}$ scale as different constant multiples of $q^3 k^3$, and the variance of the number of triangles is of order $\Theta(n^3q^3)$ under both models. Thus, the simple triangle counting algorithm consistently distinguishes $\mathbb{P}$ and $\mathbb{Q}$ if $q^3k^3\gg \sqrt{n^3q^3}$, i.e.\ $qk^2/n\gg 1$.

It is intriguing that the condition for the triangle counting algorithm to succeed coincides with the conjectured computational barrier for the more difficult task of finding all members of the community under the model $\PP$~\cite{SW-estimation}. In other words, in the entire ``hard'' regime where one cannot efficiently locate the planted community, the triangle counting algorithm fails to even tell whether the graph contains one or two communities. In this paper, we show that this statement extends beyond the simple triangle counting algorithm to all low-degree tests. Our main result is given in the following (informal) theorem statement.

\begin{theorem}[Informal]
\label{thm:informal}
If $q(k^2/n\vee 1)\leq 1/\mathrm{polylog}(n)$, then no low-degree test consistently tests between the graph models with one and two planted communities.
\end{theorem}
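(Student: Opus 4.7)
The plan is to work in the low-degree polynomial framework, adapted to the planted-vs-planted setting. Take the Erd\H{o}s--R\'{e}nyi measure $\mathbb{R} = G(n,q)$ as reference and expand any degree-$D$ polynomial $f$ in the Fourier basis $\chi_H = \prod_{e\in H}(X_e - q)/\sqrt{q(1-q)}$ indexed by edge-sets $H$. Cauchy--Schwarz in $L^2(\mathbb{R})$ gives
\[
|\mathbb{E}_\mathbb{P}[f] - \mathbb{E}_\mathbb{Q}[f]|^2 \;\leq\; \mathbb{E}_\mathbb{R}[f^2] \cdot \sum_{|H|\leq D}\bigl(\mathbb{E}_\mathbb{P}[\chi_H] - \mathbb{E}_\mathbb{Q}[\chi_H]\bigr)^2,
\]
so it suffices to bound the right-hand sum. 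To convert this into a genuine degree-$D$ indistinguishability statement, I would additionally verify that the usual low-degree $\chi^2$-divergences $\chi^2_{\leq D}(\mathbb{P}\|\mathbb{R})$ and $\chi^2_{\leq D}(\mathbb{Q}\|\mathbb{R})$ are $O(1)$ in this regime, so that $\mathbb{E}_\mathbb{R}[f^2]$ is comparable to $\mathrm{Var}_\mathbb{P}(f)$ and $\mathrm{Var}_\mathbb{Q}(f)$ for low-degree $f$.

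The central computation is the Fourier coefficients themselves. Conditioning on the community labels $\pi$, edges inside a community contribute centered mean $q$ under $\mathbb{P}$ and $2q$ under $\mathbb{Q}$ (and zero outside), so a direct calculation yields
\[
\mathbb{E}_{\mathbb{P}}[\chi_H] \;=\; (k/n)^{v(H)}\bigl(q/\sqrt{q(1-q)}\bigr)^{|H|}, \qquad \mathbb{E}_{\mathbb{Q}}[\chi_H] \;=\; 2^{b(H)}\,\mathbb{E}_{\mathbb{P}}[\chi_H],
\]
where $v(H)$ counts the vertices touched by $H$, $c(H)$ counts its connected components, and $b(H) := |H| - v(H) + c(H)$ is the cyclomatic number. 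The critical cancellation is that every \emph{forest} $H$ has $b(H)=0$, so its contribution to the sum vanishes identically. This is the rigorous manifestation of the intuition that the parameters have been tuned precisely so that all tree-like statistics --- in particular edge marginals and path and star counts --- agree between the two models, leaving only cyclic observables such as triangles to separate $\mathbb{P}$ from $\mathbb{Q}$.

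After the forest cancellation, the proof reduces to the combinatorial estimate
\[
\sum_{H:\,b(H)\geq 1,\,|H|\leq D}\,4^{b(H)}\,(k/n)^{2v(H)}\,(q/(1-q))^{|H|}\cdot \#\{\text{embeddings of }H\text{ into }K_n\} \;=\; O(1).
\]
I would enumerate shape by shape and component by component: fixing a connected shape on $v$ vertices with $e = v-1+s$ edges for excess $s\geq 1$, there are at most $n^v$ embeddings and at most $v^{2e}$ edge-set choices, giving per-shape contribution $\lesssim (k^2/n)^v (qv^2)^{v-1}(4qv^2)^{s}$. Summing over $v$ and $s$, and taking products over components (each forest component contributing through its vertex factor $(k^2/n)^v$ alone), yields geometric series that converge whenever $q(k^2/n \vee 1)\cdot D^2 \ll 1$, which the $1/\mathrm{polylog}(n)$ slack in the hypothesis absorbs for $D = \mathrm{polylog}(n)$.

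The main obstacle I anticipate is this last combinatorial bookkeeping --- in particular handling both the tree-dominated regime $k^2/n \leq 1$ (where the $\vee 1$ in the hypothesis becomes active and each added vertex no longer comes with a small factor) and the cycle-dominated regime $k^2/n \geq 1$, while checking that the polylog degree budget is consumed cleanly. A secondary obstacle is formalizing the planted-vs-planted reduction itself: existing low-degree lower bounds compare a planted model to an i.i.d.\ null, and turning the $L^2(\mathbb{R})$-style Cauchy--Schwarz bound into a true statement about tests for $\mathbb{P}$ vs.\ $\mathbb{Q}$ is where the new framework (advertised in the abstract) must be built.
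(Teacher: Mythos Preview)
Your Fourier-coefficient computation and the forest cancellation $\E_\PP[\chi_H]=\E_\QQ[\chi_H]$ when $b(H)=0$ are correct and capture one of the paper's key ideas. However, the approach has two genuine gaps.

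First, the planted-versus-planted reduction you propose does not go through. Your Cauchy--Schwarz controls $|\E_\PP f - \E_\QQ f|$ in terms of $\sqrt{\E_{G(n,q)}[f^2]}$, but separation is measured against $\Var_\PP$ and $\Var_\QQ$, and your proposed fix --- that $\chi^2_{\le D}(\PP\|G(n,q))$ and $\chi^2_{\le D}(\QQ\|G(n,q))$ are $O(1)$ --- is false in the regime of interest. For example with $k=n^{0.6}$, $q=n^{-0.3}$ one has $qk^2/n = n^{-0.1}\to 0$, yet the degree-$1$ edge contribution to $\chi^2_{\le D}(\PP\|G(n,q))$ already scales like $k^4q/n^2 = n^{0.1}\to\infty$. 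The paper stresses that its result holds even when $\PP$ and $\QQ$ are \emph{easy} to distinguish from Erd\H{o}s--R\'enyi, so no argument routed through the null can recover the full statement. What the paper does instead is bound $\Adv_{\le D}(\PP,\QQ)=\sup_f \E_\PP[f]/\sqrt{\E_\QQ[f^2]}$ directly, via recursively defined quantities $r_\alpha = \E_\PP[X^\alpha] - \sum_{\beta\lneq\alpha}\binom{\alpha}{\beta}r_\beta\,\E_\QQ[X^{\alpha-\beta}]$, which encode a Gram--Schmidt orthogonalization against $\QQ$ itself rather than against the null.

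Second, even setting the framework issue aside, your combinatorial sum diverges in part of the claimed regime. Your cancellation eliminates $H$ only when the \emph{entire} graph is a forest; a triangle together with a disjoint edge has $b(H)=1$ and survives, contributing on the order of $(k^2/n)^5 q^4$ after summing over embeddings. With $k=n^{0.9}$, $q=n^{-0.85}$ (so $qk^2/n = n^{-0.05}\to 0$) this is $n^{0.6}$, and each further disjoint edge multiplies the contribution by $(k^2/n)^2 q = n^{0.75}$, so the sum blows up. The paper's $r_\alpha$ enjoy a strictly stronger property: they \emph{factor} over connected components (Lemma~\ref{lem:multiply}), so $r_H=0$ whenever \emph{any} component of $H$ is a tree. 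This multiplicativity is precisely what orthogonalizing against $\QQ$ buys over orthogonalizing against the null, and it is what makes the combinatorics close.
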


Moreover, the informal result of Theorem~\ref{thm:informal} extends to a much wider class of testing problems than those for which it is stated. We find that, whenever recovery is computationally hard, all low-degree tests fail to distinguish models with different numbers of planted communities of possibly different sizes.
In other words, inferring the community structure is just as hard as finding members of the planted communities themselves. We show a similar phenomenon for graphs with Gaussian weights. See Theorems~\ref{thm:Gaussian} and~\ref{thm:binary} for the formal statements. It is important to note that our results apply even in regimes where it is easy to distinguish $\PP$ (or $\QQ$) from an \ER graph; that is, one cannot recover our results simply by arguing that both $\PP$ and $\QQ$ are hard to distinguish from \ERend.

We additionally give a few other related results.


\paragraph{Connections between detection and recovery in the low-degree framework.}

We show a connection between detection and recovery. For a given recovery problem in a planted model there is an equivalent testing problem where one tests between two planted models. In the other direction, for a testing problem between two planted models there is an equivalent recovery problem if the likelihood ratio of the signals exists.  This equivalence is in a strong sense: one is low-degree hard if and only if the other one is low-degree hard. We will see that 
testing between planted distributions generalizes recovery in the low-degree framework. See Section~\ref{sec:formal_equiv} for a full statement of these results.

\paragraph{Alternative proof strategy for hardness of recovery.}

We give a reduction showing that if there were an algorithm that successfully \emph{recovers} a planted community, one could turn this into an algorithm for testing one community versus two. Therefore, the ``hard'' regime for recovery contains the ``hard'' regime for testing community structure. We prove this in Theorem~\ref{thm:reduction_from1vs2} --- see Section~\ref{sec:reduction_from1vs2}. 

Although the reduction seems straightforward, there are some technical challenges: we suppose an algorithm recovers the planted community in the one-community case, but we cannot control how the algorithm behaves when there are two planted communities. 

Our reduction provides an alternative method for establishing detection-recovery gaps, rather than studying the recovery problem directly. For problems where recovery of the planted structure is strictly harder than detecting its presence, it is not viable to deduce optimal hardness of recovery from a planted-versus-null testing problem. However, we demonstrate that it is possible to attain the sharp recovery threshold via reduction from a planted-versus-planted problem, as long as the two planted distributions are appropriately chosen.

\subsection{Related Work and Open Problems}

\paragraph{The low-degree testing framework.}

Unfortunately, it seems to be beyond the current reach of computational complexity theory to prove that no polynomial-time algorithm can distinguish two random graph models, even under an assumption like $\mathrm{P} \ne \mathrm{NP}$. Nonetheless, a popular heuristic --- the \emph{low-degree testing framework}~\cite{pcal,hopkins-thesis,sos-detecting,HS-bayesian} (see~\cite{ld-notes} for a survey) --- gives us a rigorous basis on which to form conjectures about hardness of such problems. Specifically, we will study the power of \emph{low-degree tests}, a class of methods that includes tests based on edge counts, triangle counts, and other small subgraph counts. Strikingly, low-degree tests tend to be as powerful as all known polynomial-time algorithms for testing problems that are (informally speaking) of the flavor that we consider in this paper; see~\cite{hopkins-thesis,ld-notes} for discussion. In this paper, we will prove \emph{low-degree hardness}, meaning failure of all low-degree tests (to be defined formally in Section~\ref{sec:ld-testing}), for certain testing problems; this can be viewed as an apparent barrier to fast algorithms that we believe is unlikely to be overcome by known techniques, and perhaps indicates fundamental computational hardness.

\paragraph{Planted-versus-planted testing.}

We emphasize that there is a key difference between our work and existing hardness results for high-dimensional testing. The testing problems we consider are between two different ``planted'' distributions, each with a different type of planted structure. In contrast, previous low-degree hardness results for testing (e.g.,~\cite{hopkins-thesis,sos-detecting,HS-bayesian,ld-notes} and many others) have always considered testing between ``planted'' and ``null,'' where the null distribution has i.i.d.\ or at least independent entries. On a technical level, planted-versus-null problems are more tractable to analyze because we can explicitly construct a basis of orthogonal polynomials for the null distribution, which enables easy representation of the so-called `advantage' (see \eqref{eq:adv}), the main quantity to bound when using the low-degree polynomial approach, but this strategy seems more difficult to implement for planted-versus-planted problems.

The idea of planted-versus-null testing goes beyond the low-degree framework. Other forms of average-case lower bounds typically also, either explicitly or implicitly, leverage an easy-to-analyze null distribution; this includes reductions from planted clique (e.g.,~\cite{BR-reduction,BBH-reduction}), sum-of-squares lower bounds (e.g.,~\cite{pcal,KMOW}), and statistical query lower bounds (e.g.,~\cite{DKS-sq,FGRVX-sq}). In fact, these frameworks seem to struggle in settings where there is not a simple null distribution in the hypothesis test.

Our work overcomes this barrier that has limited the use of the above methods: we demonstrate for the first time that low-degree hardness results can be proven for planted-versus-planted problems. We give some general-purpose formulas (Propositions~\ref{prop:adv-gauss} and~\ref{prop:adv-binary}) that can be used to analyze a wide variety of such problems in random graphs or random matrices, not limited to just the specific models studied in this paper. The proof techniques are inspired by~\cite{SW-estimation}, which studies estimation problems rather than testing. On a technical level, the core challenge in our analysis is to bound certain recursively-defined quantities called $r_\alpha$ (defined in~\eqref{eq:r-def}). These are analogous to the cumulants that appear in~\cite{SW-estimation}, and while the $r_\alpha$ are not cumulants, they enjoy a number of similar convenient properties (see Section~\ref{sec:r_algebra}) that are important for the analysis.

\paragraph{Open problems.}

A natural next step is to investigate whether our method yields sharp computational thresholds for other problems that exhibit detection-recovery gaps. For example, the problem of parameter estimation in sparse high-dimensional linear regression likely has a detection-recovery gap (see~\cite{fp}) and can potentially be related to a testing problem between two planted models, e.g.\ between a sparse linear regression and a mixture of two sparse linear regressions.

Another open question is whether our computational hardness result can be shown in ways beyond the low-degree testing framework, such as by using the sum-of-squares framework, statistical query framework, or reduction from the planted clique problem. In particular, if the problem of testing community structure can be reduced from planted clique, this would yield a reduction from planted clique to planted dense subgraph \emph{recovery}, which is an open problem (see~\cite{BBH-reduction}).

\needspace{3\baselineskip}  
\section{Main results}

\subsection{Low-degree testing}
\label{sec:ld-testing}

We begin by explaining what it means for a low-degree test to distinguish two high-dimensional distributions.

\begin{definition}\label{def.separate}
Suppose $\PP_n$ and $\QQ_n$ are distributions on $\RR^N$ for some $N = N_n$. A \emph{degree-$D$ test} is a multivariate polynomial $f_n: \RR^N \to \RR$ of degree at most $D$ (really, a sequence of polynomials, one for each problem size $n$). Such a test $f$ is said to \emph{strongly separate} $\PP$ and $\QQ$ if, in the limit $n \to \infty$,
\[ \sqrt{\max\left\{\Var_\QQ[f], \Var_\PP[f]\right\}} = o\left(\left|\EE_\PP[f] - \EE_\QQ[f]\right|\right), \]
and \emph{weakly separate} $\PP$ and $\QQ$ if
\[ \sqrt{\max\left\{\Var_\QQ[f], \Var_\PP[f]\right\}} = O\left(\left|\EE_\PP[f] - \EE_\QQ[f]\right|\right). \]\end{definition}
Strong separation is a natural sufficient condition for success of a polynomial-based test because it implies (by Chebyshev's inequality) that $\PP$ and $\QQ$ can be distinguished by thresholding $f$'s output, with both type I and II errors $o(1)$. Weak separation also implies non-trivial testing, i.e.\ better than a random guess; see~\cite[Prop.\ 6.1]{fp}. In this paper, we characterize the limits of low-degree tests. For upper bounds, in the ``easy'' regime, we show that a constant-degree test achieves strong separation, implying a poly-time algorithm for testing with $o(1)$ error probability. For lower bounds, in the ``hard'' regime, we show that for some $D = \omega(\log n)$, no degree-$D$ test can achieve even weak separation. Because many known algorithms can be implemented as degree-$O(\log n)$ polynomials (e.g., spectral methods; see Section~4.2.3 of~\cite{ld-notes}), we treat this as ``evidence'' that no polynomial-time algorithm achieves non-trivial testing power, i.e.\ better than a random guess. Our results, in fact, often rule out much higher degree tests (e.g., $D = n^{\Omega(1)}$), depending on how far the parameters lie from the critical threshold.

\subsection{Model formulation}

We consider the problem of testing between two random graph models, both of which contain planted communities but with different community structures. We focus on testing between two \emph{additive Gaussian models} where the edge weights are Gaussian, and between two \emph{binary observation models} where the edges are unweighted and the diagonal is set to zero to ensure no self-loops in the graph. 

\begin{definition} [Additive Gaussian model]\label{def:Gaussian_comm}
Given the number of vertices $n$, total community size $k$, signal strength $\lambda>0$, number of communities $M$, and vector of community proportions $x \in [0,1]^{M}$ with $\sum_{\ell=1}^M x_\ell=1$, define the additive Gaussian model $\mathbb{P}=\mathbb{P}_{\rm Gaussian}(n,k,\lambda,M,x)$ as follows. 
Under $\mathbb{P}$, independently for each $i\in [n] := \{1,2,\ldots,n\}$, the community label $\sigma_i$ is sampled such that $\sigma_i = \ell$ with probability $x_\ell k/n$ for each $\ell \in [M]$ and $\sigma_i = \star$ (a symbol indicating membership in none of the communities) with probability $1-k/n$.
For each $i,j\in [n]$ with $i\leq j$, the edge weight $Y_{ij}$ is sampled from
\[
Y_{ij}\sim
\begin{cases}
\mathcal{N}\left(\frac{\lambda}{x_\ell}, 1\right), &\quad \sigma_i=\sigma_j=\ell\;\;\text{for some }\ell\in [M],\\
\mathcal{N}(0, 1), &\quad \text{otherwise}.
\end{cases}
\]
For $i>j$, the edge weight $Y_{ij}$ is defined to be $Y_{ji}$.\label{def:model1}\end{definition}
Notice that with the above definition, each community $\ell \in  \{1, 2, \ldots, M\}$ is expected to be of size $x_{\ell} k$ and the expected number of vertices which do not belong to any community is $n-k$. The choice of mean $\lambda/x_\ell$ ensures that on average, the vertices in one community have the same weighted degree (row sum of $Y$) as the vertices in any other community.

\begin{definition} [Binary observation model]
Given the number of vertices $n$, total community size $k$, edge probability parameters $q,s \ge 0$, number of communities $M$, and vector of community proportions $x\in \mathbb{R}^{M}$ with $\sum_{\ell=1}^M x_\ell=1$, define the Binary observation model $\mathbb{P}=\mathbb{P}_{\rm Binary}(n,k,q,s,M,x)$ as follows. 
The community labels $\{\sigma_i\}_{i \in [n]}$ are sampled the same way as in the additive Gaussian model. Given the community labels, for each pair of vertices $i,j\in [n]$ with $i < j$, the edge weight $Y_{ij}$ is sampled from
\[
Y_{ij}\sim
\begin{cases}
\mathrm{Bernoulli}\left(q+\frac{s}{x_\ell}\right), &\quad \sigma_i=\sigma_j=\ell\;\;\text{for some }\ell\in [M],\\
\mathrm{Bernoulli}\,(q), &\quad \text{otherwise}.
\end{cases}
\]
For $i>j$, the edge weight $Y_{ij}$ is defined to be $Y_{ji}$ and the diagonal entries set to zero $Y_{ii}=0$.\label{def:model2}\end{definition}
For example, if we want to model two communities  of equal sizes, we can choose $M=2$ and $x_1 = x_2 = \frac{1}{2}$. The communities are then both expected to be of size $k/2$. If we also set $s=q$ we have an in-community connection probability of $3q$ and every other pair of nodes is connected with probability $q$ as in the toy model discussed in the Introduction.

The two models introduced in Definitions~\ref{def:model1} and~\ref{def:model2} only differ in the edge weight distributions, as the community labels follow the same distribution under both models. Alternatively, we can write $S_\ell$ for the set of vertices in community $\ell$, so that $\sigma_i=\ell$ if and only if $i\in S_\ell$. Note that by definition, each vertex $i$ can belong to at most one community. In other words, the communities $\{S_\ell\}_{\ell \in [M]}$ are disjoint. 

With the other parameters fixed, we consider testing between model $\mathbb{P}$ with $M$ planted communities and community proportions $x \in [0,1]^M$, and the model $\mathbb{Q}$ with $M'$ planted communities and community proportions $x'\in [0,1]^{M'}$ for some $M'\neq M$. 
In short, for both Gaussian and Bernoulli edge weight models, we establish a ``hard'' regime where the distributions $\mathbb{P}$ and $\mathbb{Q}$ cannot be weakly separated by low-degree tests. We consider the regime $n \to \infty$ and allow all the parameters $k,\lambda,M,x$ to depend on $n$; thus, our results can apply to a growing number of communities, although our main focus is on the case where $M,M'$ are fixed so that our upper and lower bounds match.

\begin{theorem} [Additive Gaussian model]
\label{thm:Gaussian}
Given parameters $n, k, \lambda, M, M', x, x'$,
define distributions $\mathbb{P}=\mathbb{P}_{\rm Gaussian}(n,k,\lambda,M,x)$ and $\mathbb{Q}=\mathbb{P}_{\rm Gaussian}(n,k,\lambda, M',x')$.
Assume that $M \min_\ell x_\ell \ge C$ and $M' \min_\ell x'_\ell  \ge C$ for some constant $C>0$. Write $\widetilde{M}=|M-M'|$ and $\widehat{M}=\max\{M, M'\}$.
We have:
\begin{itemize}
\item If $D^5\widehat{M}^2\lambda^2(k^2/n\vee 1)=o(1)$, then no degree-$D$ test weakly separates $\mathbb{P}$ and $\mathbb{Q}$.
\item If $\widetilde{M}^2 \lambda^2 k^2/n=\omega(1)$ and $\widetilde{M}^2k/\widehat{M}^2=\omega(1)$, then there exists a degree-1 test that strongly separates $\mathbb{P}$ and $\mathbb{Q}$.
\end{itemize}
\end{theorem}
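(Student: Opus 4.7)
My plan is to handle the two directions of Theorem~\ref{thm:Gaussian} separately: the upper bound is a short first/second-moment calculation on a specific degree-1 test, while the lower bound is where the main machinery of the paper (Proposition~\ref{prop:adv-gauss} and the algebra of the $r_\alpha$) is brought to bear.

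For the upper bound, I would use the degree-1 diagonal statistic $f(Y) = \sum_{i=1}^n Y_{ii}$. Conditional on $\sigma_i = \ell$, $Y_{ii} \sim \mathcal{N}(\lambda/x_\ell, 1)$, and $\PP(\sigma_i = \ell) = x_\ell k/n$, so $\EE_\PP[f] = Mk\lambda$ and $\EE_\QQ[f] = M'k\lambda$, giving $|\EE_\PP[f] - \EE_\QQ[f]| = \widetilde{M}\,k\lambda$. Since the pairs $(\sigma_i, Y_{ii})$ are independent across $i$, the law of total variance gives
\[ \Var_\PP[f] \;=\; n \;+\; k\lambda^2 \sum_{\ell=1}^M \frac{1}{x_\ell} \;-\; \frac{M^2 k^2\lambda^2}{n}, \]
and the balance assumption $M\min_\ell x_\ell \ge C$ implies $\sum_\ell x_\ell^{-1} = O(M^2)$; the analogous bound holds for $\QQ$. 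Hence $\max\{\Var_\PP[f],\Var_\QQ[f]\} = O(n + \widehat{M}^2 k\lambda^2)$, and strong separation $(\widetilde{M} k\lambda)^2 = \omega(n + \widehat{M}^2 k\lambda^2)$ is exactly the conjunction of the two hypotheses $\widetilde{M}^2\lambda^2 k^2/n = \omega(1)$ and $\widetilde{M}^2 k/\widehat{M}^2 = \omega(1)$.

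For the lower bound, the plan is to invoke the planted-versus-planted Gaussian advantage formula (Proposition~\ref{prop:adv-gauss}) to reduce degree-$D$ weak separation to bounding a sum of the form $\sum_{\alpha:\,|\alpha|\le D} r_\alpha(\PP)\,r_\alpha(\QQ)$, indexed by multigraphs $\alpha$ on $[n]$, where $r_\alpha$ is the recursively-defined quantity in~\eqref{eq:r-def}. The goal is to show this sum is $o(1)$ under the hypothesis $D^5\widehat{M}^2\lambda^2(k^2/n\vee 1) = o(1)$. I would proceed by first establishing (or invoking) the algebraic properties of $r_\alpha$ catalogued in Section~\ref{sec:r_algebra} --- in particular multiplicativity across connected components of $\alpha$ and vanishing on tree-like $\alpha$ --- and then deriving a per-multigraph bound for the planted-community distributions: for a connected multigraph $\alpha$ on $v$ vertices with $e$ edges, conditioning on the community labels and using the balance condition should yield an estimate of the form $|r_\alpha(\PP)| \lesssim \widehat{M}^{O(v)}\lambda^e(k/n)^{v-1}$ (and similarly under $\QQ$). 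Assembling the advantage bound then amounts to enumerating multigraphs of total edge-weight $\le D$ by $(v,e)$ and component structure using a $(nD)^{O(v+e)}$-type count; tree components drop out, components with $e \ge v$ contribute a geometric series in $e$ controlled by $\widehat{M}^2\lambda^2(k^2/n\vee 1)$, and the additional $D^{O(1)}$ factors in the hypothesis are absorbed as combinatorial slack from vertex counting and the sum over component structures.

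The main obstacle, as the introduction already flags, is the per-multigraph bound on $r_\alpha$. Because $r_\alpha$ is defined only implicitly through the recursion in~\eqref{eq:r-def} and both relevant distributions are planted rather than i.i.d., the orthogonal-polynomial computation that drives the usual planted-versus-null low-degree argument is unavailable; one must instead unwind the recursion directly and verify the cumulant-like identities of Section~\ref{sec:r_algebra} in order to extract the clean exponential-in-$e$ bound above. Once that step is in hand, the assembly into an $o(1)$ bound on the advantage is comparatively routine combinatorics.
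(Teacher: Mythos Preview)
Your upper-bound argument is essentially the paper's: the diagonal sum $\sum_i Y_{ii}$, the same moment computations, and the same use of $M\min_\ell x_\ell\ge C$ to get $\sum_\ell x_\ell^{-1}=O(M^2)$. That part is fine.

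The lower-bound outline is also broadly the paper's (Proposition~\ref{prop:adv-gauss}, then multiplicativity over components, vanishing on trees, a per-$\alpha$ bound, and a $(v,e)$ enumeration), but two slips in your plan would derail the argument if carried out as written.

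First, there is no ``$r_\alpha(\PP)$'' and ``$r_\alpha(\QQ)$'' separately: the recursion~\eqref{eq:r-def} already mixes $\PP$ and $\QQ$, so $r_\alpha$ is a single quantity depending on the \emph{pair}. Proposition~\ref{prop:adv-gauss} bounds $\Adv_{\le D}^2$ by $\sum_{|\alpha|\le D} r_\alpha^2/\alpha!$, not by a bilinear $\sum r_\alpha(\PP)r_\alpha(\QQ)$, and the target is $1+o(1)$ (the $\alpha=0$ term contributes $r_\varnothing^2=1$), not $o(1)$. This matters because the inductive bound on $r_\alpha$ (Lemma~\ref{lem:r.bound}) has to be proved by controlling the full recursion involving both $\EE_\PP[X^\alpha]$ and $\EE_\QQ[X^{\alpha-\beta}]$ simultaneously; you cannot bound a $\PP$-only object and a $\QQ$-only object and combine them.

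Second, your proposed per-multigraph estimate $|r_\alpha|\lesssim \widehat{M}^{O(v)}\lambda^e(k/n)^{v-1}$ is too weak. The paper obtains $(k/n)^{|V(\alpha)|}=(k/n)^v$ (Lemma~\ref{lem:r.bound}), and that extra factor of $k/n$ is essential: already for a single self-loop ($v=e=1$), your bound gives $O(1)$ while there are $n$ such $\alpha$, so the sum would be $\Theta(n)$ rather than $o(1)$. The correct exponent $v$ comes directly from $\EE_\PP[X^\alpha]=\lambda^{|\alpha|}(k/n)^{|V(\alpha)|}\sum_\ell x_\ell^{|V(\alpha)|-|\alpha|}$ (Lemma~\ref{lem:integralgenx}): every vertex of a connected $\alpha$, including the first, must land in the same community, contributing one factor of $k/n$ each. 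Once you fix these two points, the rest of your assembly (drop trees, use $e\ge v$, count $|\mathcal G_{d,v}|\le n^v v^{2d}$, sum a geometric series) is exactly what the paper does.
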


In the regime $k^2 \geq n$, $\widehat{M} = O(1)$, and $D \le \mathrm{polylog}(n)$, Theorem~\ref{thm:Gaussian} precisely characterizes (up to logarithmic factors) the computational threshold for low-degree testing. This threshold coincides with the conjectured computational threshold for \emph{recovering} a single planted community, which has been established in the low-degree polynomial framework~\cite[Theorem 2.5]{SW-estimation}. We focus on the $k^2\geq n$ regime in this paper, as this is where there is a conjectured detection-recovery gap, but we suspect that when $\widehat{M}$ is constant, $\lambda^2(k^2/n\vee 1)\sim 1$ is the computational threshold across the entire parameter regime. The optimal test when $k^2 < n$ should be based on the maximum diagonal entry, and while this is not a polynomial, it should be possible to approximate it by one (similar to Section~4.1.1 of~\cite{SW-estimation}).

\begin{theorem} [Binary observation model]
\label{thm:binary}
Given parameters $n, k, q, s, M, M', x, x'$, define distributions $\mathbb{P}=\mathbb{P}_{\rm Binary}(n,k,q,s,M,x)$ and $\mathbb{Q}=\mathbb{P}_{\rm Binary}(n,k,q,s, M',x')$. 
Assume that $M \min_\ell x_\ell \ge C$ and $M' \min_\ell x'_\ell  \ge C$ for some constant $C>0$ and that $q + s/(\min_\ell x_\ell) \le \tau_1$ for some constant $\tau_1 < 1$. Write $\widetilde{M}=|M-M'|$ and $\widehat{M}=\max\{M, M'\}$. We have:
\begin{itemize}
\item If $D^5 \widehat{M}^2 (s^2/q) (k^2/n\vee 1)=o(1)$, then no degree-$D$ test weakly separates $\mathbb{P}$ and $\mathbb{Q}$.
\item If $\widetilde{M}^{2/3} (s^2/q) k^2/n=\omega(1)$, $\widehat{M}^{-1/3}sk=\omega(1)$ and $\widetilde{M}^2k/\widehat{M}^2=\omega(1)$ then there exists a degree-3 test that strongly separates $\mathbb{P}$ and $\mathbb{Q}$.
\end{itemize}
\end{theorem}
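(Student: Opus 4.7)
The theorem has two directions, proved by very different techniques. For the upper bound, the natural degree-3 test is the centered triangle count $T(Y) = \sum_{\{i,j,k\} \subset [n]} (Y_{ij}-q)(Y_{jk}-q)(Y_{ik}-q)$. Expanding $\mathbb{E}_\mathbb{P}[T]$ by cases on whether the three labels $\sigma_i, \sigma_j, \sigma_k$ coincide, only within-community triples contribute an excess $\sum_\ell \binom{x_\ell k}{3}\bigl[(q+s/x_\ell)^3 - q^3\bigr]$. Using $\sum_\ell x_\ell = 1$ and the lower bound $M\min_\ell x_\ell \ge C$ (so $\sum_\ell x_\ell^p \asymp M^{1-p}$), expanding in powers of $s/x_\ell$ produces a $q^2 s k^3/M$ piece and an $s^3 k^3 M$ piece, while the middle $qs^2 k^3$ piece is the same under $\mathbb{P}$ and $\mathbb{Q}$ and cancels. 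Hence $|\mathbb{E}_\mathbb{P}[T]-\mathbb{E}_\mathbb{Q}[T]|$ has scale $q^2 s k^3 \widetilde M/\widehat M^2 + s^3 k^3 \widetilde M$. The variance decomposes by overlap type of triangle pairs into an $n^3 q^3$ contribution from disjoint triples plus smaller edge- and vertex-shared contributions. Matching the squared gap against each variance term in turn reproduces the three stated sufficient conditions and yields strong separation.

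For the lower bound the plan is to invoke the general planted-versus-planted formula (Proposition~\ref{prop:adv-binary}) stated earlier, which expresses the degree-$D$ squared advantage as a sum over multigraphs $\alpha$ on $[n]$ of edge-weight at most $D$ of coefficients built from the recursively-defined quantities $r_\alpha$ of~\eqref{eq:r-def}. Because both hypotheses are planted, the standard strategy of expanding against an orthogonal basis of the null fails; instead, the \ERend{} graph with edge probability $q$ serves as a reference, and $r_\alpha$ records the ``extra'' moment information contributed by each planted model. The task is then to show that $\sum_\alpha (\text{combinatorial weight})\cdot (r^{\mathbb P}_\alpha - r^{\mathbb Q}_\alpha)^2 = o(1)$ under the assumption $D^5\widehat M^2(s^2/q)(k^2/n\vee 1)=o(1)$.

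The main obstacle is controlling the $r_\alpha$. Since they are not true cumulants, the usual multilinear toolbox is not directly available, and every identity must be re-derived. The first step is to develop the formal algebra of $r_\alpha$ in the spirit of Section~\ref{sec:r_algebra}: multiplicativity across connected components of $\alpha$, a vanishing/reduction property that kills pendant edges and bridges (restricting contributing $\alpha$ to the 2-edge-connected ones), and Leibniz-style recursions that accommodate the nontrivial higher Bernoulli moments. Here the bound $q+s/x_\ell\le\tau_1<1$ is what keeps those higher moments uniformly controlled, producing the factor $s^2/q$ per edge in the final estimate (as opposed to $\lambda^2$ in the Gaussian case). These properties yield a per-multigraph bound of the shape $|r_\alpha^{\mathbb P}-r_\alpha^{\mathbb Q}|\lesssim \widehat M^{\,|V(\alpha)|}(s/\sqrt{q})^{|E(\alpha)|}(k/n)^{|V(\alpha)|}$. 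Combined with the standard count of connected multigraphs of given size on $[n]$, the total sum collapses into a geometric series whose ratio is a constant multiple of $D^5\widehat M^2(s^2/q)(k^2/n\vee 1)$; this is $o(1)$ by hypothesis, and weak separation by any degree-$D$ test is ruled out. The bulk of the work is in establishing the cumulant-like identities for $r_\alpha$ and carrying the constants $C$, $\tau_1$, and $\widehat M$ cleanly through the per-multigraph bound.
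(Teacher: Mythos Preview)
Your easy-regime plan matches the paper's: both threshold the signed triangle count $T=\sum_{i<j<k}(Y_{ij}-q)(Y_{jk}-q)(Y_{ik}-q)$. One computational slip: for the \emph{centered} statistic, when $i,j,k$ share community $\ell$ the three factors are independent with mean $s/x_\ell$, so the contribution is $(s/x_\ell)^3$, not $(q+s/x_\ell)^3-q^3$. Hence (Lemma~\ref{lem:signtricalcs}) $\mathbb{E}_\mathbb{P}[T]=\tfrac13 Ms^3k^3(1+o(1))$ with no $q^2s$ or $qs^2$ pieces to track; the gap is simply $\tfrac13\widetilde{M}s^3k^3$, and the variance decomposition into the overlap types $\gbowtie,\dkite,\dddtrian$ gives the three stated conditions.

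For the hard regime there is a genuine gap in your understanding of Proposition~\ref{prop:adv-binary}, and you also miss the paper's main shortcut. First, there are not two quantities $r_\alpha^{\mathbb{P}},r_\alpha^{\mathbb{Q}}$ relative to an \ER reference; the recursion~\eqref{eq:r-def} defines a \emph{single} $r_\alpha$ in which $\mathbb{P}$ supplies the leading moment and $\mathbb{Q}$ the correction terms, and the bound in Proposition~\ref{prop:adv-binary} is $\sum_\alpha r_\alpha^2/(\tau_0(1-\tau_1))^{|\alpha|}$, not a sum of squared differences. Second, the paper does \emph{not} re-derive the $r_\alpha$ combinatorics in the Bernoulli setting, nor does it need any pendant-edge or 2-edge-connectedness reductions (only the forest-vanishing Lemma~\ref{lem:forest} is used). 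Instead it observes that the binary signal satisfies $X_{ij}^{(q,s)}=(s/\lambda)X_{ij}^{(\lambda)}+q$ for the Gaussian signal $X^{(\lambda)}$ with $\lambda:=s/\sqrt{q(1-\tau_1)}$, and then invokes the shift- and scale-invariance lemmas (Lemmas~\ref{lem:constantshift},~\ref{lem:scalar}) to get $r_\alpha(X^{(q,s)})=(s/\lambda)^{|\alpha|}r_\alpha(X^{(\lambda)})$. Plugging into Proposition~\ref{prop:adv-binary} with $\tau_0=q$, the factors $(s/\lambda)^{2|\alpha|}$ and $(q(1-\tau_1))^{-|\alpha|}$ cancel exactly, collapsing the whole bound to the already-proved Gaussian sum $\sum_\alpha r_\alpha(X^{(\lambda)})^2/\alpha!$. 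Thus the binary lower bound is a three-line corollary of Theorem~\ref{thm:Gaussian}, which explains why the hypothesis is literally the Gaussian one with $\lambda^2$ replaced by $s^2/q$ (up to the harmless constant $1-\tau_1$). Your direct route might be made to work, but it is substantially more laborious and the confusion about what $r_\alpha$ is would need to be resolved first.
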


The upper and lower bounds match (up to log factors) provided $k^2 \ge n$, $\widehat{M} = O(1)$, $D \le \mathrm{polylog}(n)$, and $q \ge 1/n$. The condition $q \ge 1/n$ is natural since without it there will be isolated vertices. The regime $k^2 < n$ is more complicated, and some open questions remain here even for simpler testing and recovery problems than those we study here; see Section~2.4.1 of~\cite{SW-estimation} for discussion.

\subsection{Formal relation to low-degree recovery}\label{sec:formal_equiv}


While there has been much work on studying the low degree polynomial framework for measuring the computational hardness of problems when one wishes to \emph{detect} the presence of hidden structures, a recent line of work~\cite{SW-estimation} has extended the framework to the setting where one wishes to \emph{recover} a planted structure from noisy data. We use this recent work to show a connection between the general low degree frameworks for detection and recovery. Namely, for any given recovery problem where one wishes to estimate some scalar quantity $g(X) \in \mathbb{R}$ of the planted distribution (e.g.\ $g(X)=X_1$) from some observation $Y \in \mathbb{R}^N$, 
there is a specific testing problem with the same computational hardness regimes and, in the other direction, for any testing problem $H_0: Y \sim \mathbb{P}$ versus $H_1: Y \sim \mathbb{Q}$, as long as $\mathbb{P}_X \ll \mathbb{Q}_X$ so that the likelihood ratio of the signal $d \mathbb{P}_X/ d \mathbb{Q}_X$ exists, there is a corresponding recovery problem with the same computational hardness regimes.

To show these equivalences, we compare the object $\Corr$ from~\cite[Eq.~(2)]{SW-estimation} to our notion of advantage $\Adv$. In particular, we have
\begin{equation}
\Corr_{\leq D}(g(X),\widetilde{\QQ}) := \sup_{f \text{ deg } D 
}  \frac{\E_{\widetilde{\QQ}}[g(X) f(Y)]}{ \sqrt{\E_{\widetilde{\QQ}}[f(Y)^2] }},
\end{equation}
where we have written $g(X)$ to indicate that the scalar quantity we are estimating depends on the underlying unknown signal $X$, and we treat $\widetilde{\QQ}$ as the joint distribution of $(X, Y)$. We also define $\NCorr$ that renormalizes $\Corr$ by dividing through by $\E_{\widetilde{\QQ}}[g(X)]$:
\begin{equation}\label{eq:NCorr}
\NCorr_{\leq D}(g(X),\widetilde{\QQ}) := \frac{1}{\E_{\widetilde{\QQ}}[g(X)]} \sup_{f \text{ deg } D 
} \frac{\E_{\widetilde{\QQ}}[g(X) f(Y)]}{ \sqrt{\E_{\widetilde{\QQ}}[f(Y)^2] }}.
\end{equation}
We compare $\NCorr$ to our advantage $\Adv$ defined in \eqref{eq:adv},
\begin{equation}\label{eq:adv_repeated}
\Adv_{\le D}(\PP,\QQ) := \sup_{f \text{ deg } D} \frac{\E_\PP[f(Y)]}{\sqrt{\E_\QQ[f(Y)^2]}},
\end{equation}
and, loosely, we notice that $\Adv_{\le D}(\PP,\QQ)$ equals $\NCorr_{\leq D}(g(X),\widetilde{\QQ})$ when $\QQ$ equals $\widetilde{\QQ}$ and $\PP$ is defined such that $\E_\PP[f(Y)] = \E_{\widetilde{\QQ}}[g(X) f(Y)]/{\E_{\widetilde{\QQ}}[g(X)]}$. This gives rise to the following proposition.

In the testing and recovery problems respectively, the conjectured-hard region, i.e.\ where low-degree algorithms fail, is that where $\Adv$ and $\Corr'$ respectively are {$1+o(1)$ which for testing rules out $f$ which weakly distinguish $f$ in the testing problem (and $O(1)$ which rules out $f$ which strongly distinguish).  

For the case of recovery the notion of failure relates to the low-degree minimum mean squared error ${\rm MMSE}_{\leq D}:=\inf_{f \text{ deg } D}\mathbb{E}_{\widetilde{\QQ}}[(f(Y) - g(X))^2]$. Note the trivial estimator $f(Y)=\mathbb{E}_{\widetilde{\QQ}}[g(X)]$, a constant polynomial, achieves ${\rm MSE}$ of $\mathbb{E}_{\widetilde{\QQ}}[g(X)^2] - \mathbb{E}_{\widetilde{\QQ}}[g(X)]^2$ and by Fact~1.1 of~\cite{SW-estimation},
\[  
{\rm MMSE}_{\leq D} = \E_{\widetilde{\QQ}}[g(X)^2] - \E_{\widetilde{\QQ}}[g(X)]^2 \NCorr^2_{\leq D}(g(X),\widetilde{\QQ}).
\]
Thus $\NCorr=1+o(1)$ rules out $f$ which do substantially better than the trivial estimator.

%
\begin{proposition}\label{prop.equivs} Suppose all probability distributions are either discrete or have a density function.
    \phantom{ } 
    \begin{enumerate}
        \item \label{recovery_has_testing_equiv}
        Given any recovery problem to estimate non-negative $g(X)$ given $Y$ with joint distribution $(X,Y)\sim \QQ$ and $0<\E_\QQ[g(X)]<\infty$, there exists a joint distribution $(X,Y) \sim \PP$ such that for the planted testing problem 
        $H_0: \; Y \sim \QQ$ vs $H_1:\; Y \sim \PP$,
        \[ \Adv_{\le D}(\PP,\QQ) =  \NCorr_{\leq D}(g(X),\QQ). \]
        \item\label{testing_with_likelihood_ratio_has_recovery}
        Given a testing problem $H_0: Y\sim \QQ$ and $H_1: Y\sim \PP$ between two planted distributions $ (X,Y)\sim \QQ$ and $(X,Y)\sim \PP$, if $\ell(X):={d\PP_X}/{d\QQ_X}$ exists and $Y|X$ is the same under $\PP$ and~$\QQ$, then for the estimation problem $g(X)=\ell(X)$ in $\QQ$,        
        %
        \[ \Adv_{\le D}(\PP,\QQ) =  \NCorr_{\leq D}(g(X), \QQ). \]
        %
    \end{enumerate}
\end{proposition}

The proof of Proposition~\ref{prop.equivs} is included in Section~\ref{sec:recoversSWrecovery}, where we also show that our Propositions~\ref{prop:adv-gauss} and~\ref{prop:adv-binary} generalize the cumulant upper bounds on $\Corr$ shown in~\cite{SW-estimation}.

Consider the important special case when we estimate the indicator of an event $A$ on the signal, meaning that $g(X) = \mathbb{I}\{A\}$. In this case, Proposition~\ref{prop.equivs} shows that the equivalent testing problem is one where we test between $H_1: X \sim \mathbb{P}$ versus $H_0: X \sim \mathbb{Q}$ where $\mathbb{Q}$ is the measure describing the randomness in the observed $Y$, and $\mathbb{P}$ is the measure of $Y$ conditional on the occurrence of event~$A$.

In particular, for recovery in the planted dense submatrix (PDS) model one estimates the indicator that $v_1$ is in the single planted community -- the problem originally considered in~\cite{SW-estimation}. The equivalent testing problem is to test between PDS and PDS conditioned on the event that $v_1$ is in the planted community.

Note that for the particular testing problem considered in the rest of this paper, that of testing between $M$ and $M'\neq M$ communities the proposition does not clearly give an equivalent recovery problem. (For signal $X$ and observation $Y$ as in Definition~\ref{def:Gaussian_comm} the supports of $\PP_X$ and $\QQ_X$ are disjoint.)

\needspace{3\baselineskip}
\subsection{Proof overview}\label{sec:proof.overview}

\paragraph{Main quantity to bound: advantage.}

In order to rule out weak separation between distributions $\PP = \PP_n$ and $\QQ = \QQ_n$ on $\RR^{N_n}$, it will suffice to bound the degree-$D$ ``advantage,'' named as such to emphasize that it measures the ability of low-degree polynomials to outperform random guessing. The degree-$D$ advantage, $\Adv_{\le D}$, is defined as
\begin{equation}
\label{eq:adv}
\Adv_{\le D}(\PP,\QQ) := \sup_{f \text{ deg } D} \frac{\E_\PP[f]}{\sqrt{\E_\QQ[f^2]}},
\end{equation}
where $f$ ranges over polynomials $\RR^N \to \RR$ of degree at most $D$. The quantity $\Adv_{\le D}$ is also the \emph{norm of the degree-$D$ likelihood ratio} (see~\cite{hopkins-thesis,ld-notes}), but we will not use this interpretation here as the likelihood ratio is difficult to work with in our setting. We note that while the notion of separation is symmetric between $\PP$ and $\QQ$, the notion of advantage is not; for our purposes, we could just as easily work with $\Adv_{\le D}(\QQ,\PP)$ instead of $\Adv_{\le D}(\PP,\QQ)$. The following basic fact connects $\Adv_{\le D}$ with strong/weak separation.

\begin{lemma}\label{lem:adv-sep}
Fix a sequence $D = D_n$.
\begin{itemize}
\item If $\Adv_{\le D}(\PP,\QQ) = O(1)$ then no degree-$D$ test strongly separates $\PP$ and $\QQ$.
\item If $\Adv_{\le D}(\PP,\QQ) = 1+o(1)$ then no degree-$D$ test weakly separates $\PP$ and $\QQ$.
\end{itemize}
\end{lemma}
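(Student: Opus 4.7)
The plan is to deduce both bullets from a single sharpened inequality, valid for every degree-$D$ polynomial $f$:
\[
(\E_\PP[f]-\E_\QQ[f])^{2} \;\le\; \bigl(\Adv_{\le D}(\PP,\QQ)^{2}-1\bigr)\,\Var_\QQ[f].
\]
The ``$-1$'' on the right is essential: it is what lets me turn $\Adv_{\le D}=1+o(1)$ into $(\E_\PP[f]-\E_\QQ[f])^{2} = o(\Var_\QQ[f])$, which is needed to rule out weak separation. A naive approach---plug $f - \E_\QQ[f]$ into the definition of $\Adv_{\le D}$---gives only $(\E_\PP[f]-\E_\QQ[f])^{2} \le \Adv_{\le D}^{2}\,\Var_\QQ[f]$, which is too weak for the second bullet.

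To get the sharper bound I will exploit the fact that the supremum in the definition of $\Adv_{\le D}$ is over \emph{all} degree-$D$ polynomials, including constants, so one can tune an additive constant to improve any candidate. Concretely, given $f$ of degree at most $D$, set $g := f - \E_\QQ[f]$ and consider the one-parameter family $f_{c} := c + g$. Since $g$ is orthogonal to the constants in $L^{2}(\QQ)$, one has $\E_\QQ[f_{c}^{2}] = c^{2} + \Var_\QQ[f]$, while $\E_\PP[f_{c}] = c + (\E_\PP[f]-\E_\QQ[f])$. Writing $a := \E_\PP[f]-\E_\QQ[f]$ and $b^{2} := \Var_\QQ[f]$, optimizing $(c+a)^{2}/(c^{2}+b^{2})$ in $c$ (one-line calculus, optimum at $c = b^{2}/a$) yields the value $1 + a^{2}/b^{2}$. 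So $\Adv_{\le D}(\PP,\QQ)^{2} \ge 1 + a^{2}/b^{2}$, which rearranges to the displayed inequality.

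With that inequality in hand, both bullets are immediate. If $\Adv_{\le D}(\PP,\QQ)=O(1)$, then $(\E_\PP[f]-\E_\QQ[f])^{2} = O(\Var_\QQ[f]) \le O\bigl(\max\{\Var_\PP[f],\Var_\QQ[f]\}\bigr)$, which contradicts the defining condition of strong separation. If $\Adv_{\le D}(\PP,\QQ) = 1+o(1)$, then $\Adv_{\le D}^{2}-1 = o(1)$, so $(\E_\PP[f]-\E_\QQ[f])^{2} = o(\Var_\QQ[f]) \le o\bigl(\max\{\Var_\PP[f],\Var_\QQ[f]\}\bigr)$, contradicting weak separation. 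There is no real obstacle: the whole argument is one identity plus a calculus exercise. The only easy-to-miss point is the constant-shift step, without which one cannot distinguish ``$\Adv_{\le D}$ bounded'' from ``$\Adv_{\le D}\to 1$'', since the constant $f\equiv 1$ already forces $\Adv_{\le D}(\PP,\QQ)\ge 1$.
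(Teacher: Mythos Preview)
Your proof is correct and rests on the same device as the paper's: shifting the centered test by a constant to lower-bound $\Adv_{\le D}$. The paper argues by contradiction, normalizing a putative weakly-separating test so that $\E_\QQ[g]=0$, $\E_\PP[g]=1$, $\Var_\QQ[g]\le C$, and then plugging in the \emph{specific} shift $f=g+C$ to obtain $\Adv_{\le D}\ge (1+C)/\sqrt{C+C^2}=\sqrt{(1+C)/C}>1$. You instead optimize over the shift, which yields the sharp inequality $(\E_\PP f-\E_\QQ f)^2\le(\Adv_{\le D}^2-1)\Var_\QQ[f]$ and handles both bullets at once. The content is the same (indeed, in the paper's normalization your optimal shift is $c=b^2/a=\Var_\QQ[g]$, and the paper's choice $c=C$ is just the upper bound on that); your packaging is cleaner and gives a reusable inequality.
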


\noindent The proof of Lemma~\ref{lem:adv-sep}, along with the proofs of all facts in this section, can be found in Section~\ref{sec:additional-proofs}. In light of Lemma~\ref{lem:adv-sep}, it remains to bound $\Adv_{\le D}$. We will provide a few general-purpose bounds, one for Gaussian problems and one for binary-valued problems. Both will involve recursively-defined quantities $r_\alpha$, introduced in what follows.

\paragraph{Recursive definition for $r_\alpha$.}

Suppose $X$ is a random variable taking values in $\RR^N$, which may have a different distribution under $\PP$ and $\QQ$. For $\alpha,\beta \in \NN^N$ where $\NN = \{0,1,2,\ldots\}$, define 
$$|\alpha| := \sum_i \alpha_i, \qquad \alpha! := \prod_i \alpha_i!, \qquad \binom{\alpha}{\beta} := \prod_i \binom{\alpha_i}{\beta_i}, \qquad \text{ and } \qquad X^\alpha := \prod_i X_i^{\alpha_i}.$$ 
Also define $\beta \le \alpha$ to mean ``$\beta_i \le \alpha_i$ for all $i$'' and define $\beta \lneq \alpha$ to mean ``$\beta_i \le \alpha_i$ for all $i$ and for some $i$ the inequality is strict: $\beta_i < \alpha_i$.'' With this notation in hand, define $r_\alpha = r_\alpha(X) \in \RR$ for $\alpha \in \NN^N$ recursively by
\begin{equation}\label{eq:r-def}
r_\alpha = \EE_\mathbb{P}\left[X^\alpha\right] - \sum_{0 \le \beta\lneq \alpha}r_\beta \, \binom{\alpha}{\beta} \, \EE_\mathbb{Q}\left[X^{\alpha-\beta}\right].
\end{equation}

\paragraph{Bounds on advantage.}

We have the following general-purpose bounds on $\Adv_{\le D}$ in terms of $r_\alpha$ defined in \eqref{eq:r-def}. The proofs are inspired by~\cite{SW-estimation} and can be found in Section~\ref{sec:additional-proofs}.

\begin{proposition}[General additive Gaussian model]
\label{prop:adv-gauss}
Suppose $\PP$ and $\QQ$ take the following form: to sample $Y \sim \PP$ (or $Y \sim \QQ$, respectively), first sample $X \in \RR^N$ from an arbitrary prior $\PP_X$ (or $\QQ_X$, resp.), then sample $Z \sim \mathcal{N}(0,I_N)$, and set $Y = X + Z$. Define $r_\alpha$ as in~\eqref{eq:r-def}. Then
\[ \Adv_{\le D}(\PP,\QQ) \le \sqrt{\sum_{\alpha \in \NN^N, \, |\alpha| \le D} \frac{r_\alpha^2}{\alpha!}}. \]
\end{proposition}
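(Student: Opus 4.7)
The plan is to expand $f$ in the multivariate Hermite basis $\{h_\alpha\}$ (orthogonal under $\mathcal{N}(0,I_N)$) and use standard Hermite identities to rewrite both $\EE_\PP[f(Y)]$ and $\EE_\QQ[f(Y)^2]$ in terms of the same family of auxiliary polynomials $G_\gamma$; a single Cauchy-Schwarz step then closes the argument. I would start by writing $f(y) = \sum_{|\alpha| \le D} \hat f(\alpha)\, h_\alpha(y)$. The identity $\EE_Z[h_\alpha(X+Z)] = X^\alpha$ gives $\EE_\PP[f(Y)] = \sum_\alpha \hat f(\alpha)\, \EE_\PP[X^\alpha]$, and then the recursion~\eqref{eq:r-def}, rearranged as $\EE_\PP[X^\alpha] = \sum_{\beta \le \alpha} r_\beta \binom{\alpha}{\beta} \EE_\QQ[X^{\alpha-\beta}]$, lets me swap the order of summation to obtain
\[ \EE_\PP[f(Y)] = \sum_\beta r_\beta\, \EE_\QQ[G_\beta(X)], \qquad G_\beta(x) := \sum_{\alpha \ge \beta} \hat f(\alpha) \binom{\alpha}{\beta}\, x^{\alpha - \beta}. \]

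For the denominator, I would invoke the Hermite translation formula $h_\alpha(X+Z) = \sum_{\gamma \le \alpha} \binom{\alpha}{\gamma} X^{\alpha-\gamma}\, h_\gamma(Z)$ to rearrange $f(X+Z) = \sum_\gamma G_\gamma(X)\, h_\gamma(Z)$, with \emph{exactly the same} polynomials $G_\gamma$ as above. Squaring, taking $\EE_Z$ using $\EE[h_\gamma(Z) h_{\gamma'}(Z)] = \gamma!\,\delta_{\gamma\gamma'}$, and averaging over $X \sim \QQ_X$ yields $\EE_\QQ[f(Y)^2] = \sum_\gamma \gamma!\, \EE_\QQ[G_\gamma(X)^2]$. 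With both quantities expressed in terms of the $G_\gamma$, I would split $r_\beta \EE_\QQ[G_\beta(X)] = (r_\beta/\sqrt{\beta!}) \cdot (\sqrt{\beta!}\, \EE_\QQ[G_\beta(X)])$ and apply Cauchy-Schwarz, restricting the outer sum to $|\beta| \le D$ (which is legitimate since $\hat f(\alpha) = 0$ for $|\alpha| > D$ forces $G_\beta \equiv 0$ whenever $|\beta| > D$), followed by Jensen's inequality $\EE_\QQ[G_\beta(X)]^2 \le \EE_\QQ[G_\beta(X)^2]$, to conclude
\[ \EE_\PP[f(Y)]^2 \le \Bigl(\sum_{|\beta| \le D} \frac{r_\beta^2}{\beta!}\Bigr)\, \EE_\QQ[f(Y)^2]. \]
Dividing and taking the supremum over $f$ then gives the stated bound on $\Adv_{\le D}(\PP,\QQ)$.

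The conceptual heart of the argument---and the step I expect to be the main obstacle---is engineering both expansions so that the \emph{same} auxiliary polynomials $G_\gamma$ appear on both sides. This is not automatic, but rests on the pleasant coincidence that the binomial coefficients $\binom{\alpha}{\beta}$ produced by the recursion for $r_\alpha$ match those produced by the Hermite translation identity; it is this matching that lets Cauchy-Schwarz be applied in the \emph{same} basis that diagonalizes $\EE_\QQ[f^2]$. Once that alignment is in place, the rest is a one-line Cauchy-Schwarz plus Jensen. The only remaining technical care is multi-index bookkeeping---verifying that the three Hermite identities I rely on (the mean formula, the translation formula, and orthogonality) extend coordinatewise from one dimension without extraneous combinatorial factors, and that the ranges of summation behave correctly under the swap.
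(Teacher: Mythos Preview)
Your argument is correct and coincides with the paper's approach (inherited from~\cite{SW-estimation}): there the bound is phrased in matrix language as $c^\top M^{-1} c \le \|w\|^2$ via a PSD comparison $M \succeq U^\top U$ and a triangular solve $U^\top w = c$, which are precisely your Jensen step $\EE_\QQ[G_\beta(X)]^2 \le \EE_\QQ[G_\beta(X)^2]$ and the $r_\alpha$ recursion, respectively. You have simply unpacked the same linear algebra into explicit Hermite-polynomial form.
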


\begin{proposition}[General binary observation model]
\label{prop:adv-binary}
Suppose $\PP$ and $\QQ$ each take the following form. To sample $Y \sim \PP$ (or $Y \sim \QQ$, respectively), first sample $X \in \RR^N$ from an arbitrary prior $\PP_X$ (or $\QQ_X$, resp.) supported on $X \in [\tau_0,\tau_1]^N$ with $0 < \tau_0 \le \tau_1 < 1$, then sample $Y \in \{0,1\}^N$ with entries conditionally independent given $X$ and $\EE[Y_i | X] = X_i$. Define $r_\alpha$ as in~\eqref{eq:r-def}. Then
\[ \Adv_{\le D}(\PP,\QQ) \le \sqrt{\sum_{\alpha \in \{0,1\}^N, \, |\alpha| \le D} \frac{r_\alpha^2}{(\tau_0(1-\tau_1))^{|\alpha|}}}. \]
\end{proposition}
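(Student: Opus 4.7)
Since $Y\in\{0,1\}^N$, every polynomial of degree $\le D$ agrees, as a function on $\{0,1\}^N$, with a \emph{multilinear} polynomial of the same degree. I may therefore assume $f(Y)=\sum_{\alpha\in\{0,1\}^N,\,|\alpha|\le D} c_\alpha Y^\alpha$. The plan is to prove a deterministic-in-$f$ Cauchy--Schwarz inequality
\[ (\EE_\PP[f])^2 \;\le\; \Bigl(\,\sum_{\alpha\in\{0,1\}^N,\,|\alpha|\le D} \frac{r_\alpha^2}{(\tau_0(1-\tau_1))^{|\alpha|}}\Bigr)\;\EE_\QQ[f^2], \]
from which dividing by $\EE_\QQ[f^2]$ and taking a supremum over $f$ gives the desired bound on $\Adv_{\le D}(\PP,\QQ)$.

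The key device I would introduce is the conditionally-orthogonal basis $g_\beta(Y,X) := \prod_{i\in\beta}(Y_i-X_i)$ for $\beta\in\{0,1\}^N$. Given $X$, the $Y_i$ are independent Bernoullis with variances $X_i(1-X_i)$, so $\EE[g_\beta g_{\beta'}\mid X] = \mathbf{1}\{\beta=\beta'\}\prod_{i\in\beta}X_i(1-X_i)$, and this last product is bounded below by $(\tau_0(1-\tau_1))^{|\beta|}$ by the support assumption. Expanding each factor $Y_i$ in $Y^\alpha$ as $X_i+(Y_i-X_i)$ gives the clean decomposition $f(Y) = \sum_\beta g_\beta(Y,X)\,\widetilde f_\beta(X)$ with $\widetilde f_\beta(X) := \sum_{\alpha\ge\beta} c_\alpha X^{\alpha-\beta}$. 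Conditional orthogonality plus Jensen's inequality then yield
\[ \EE_\QQ[f(Y)^2] \;\ge\; \sum_{|\beta|\le D}(\tau_0(1-\tau_1))^{|\beta|}\,\bigl(\EE_{\QQ_X}[\widetilde f_\beta(X)]\bigr)^2. \]

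For the left-hand side, I will use~\eqref{eq:r-def}, which in the multilinear case (all binomials equal to $1$) rearranges to $\EE_\PP[X^\alpha] = \sum_{\beta\le\alpha} r_\beta\,\EE_\QQ[X^{\alpha-\beta}]$. Substituting into $\EE_\PP[f(Y)] = \sum_\alpha c_\alpha\,\EE_\PP[X^\alpha]$ and swapping the order of summation produces $\EE_\PP[f(Y)] = \sum_\beta r_\beta\,\EE_{\QQ_X}[\widetilde f_\beta(X)]$ in exactly the $\widetilde f_\beta$ that appeared above. A weighted Cauchy--Schwarz with weights $(\tau_0(1-\tau_1))^{|\beta|}$ now matches the two sides and gives the claimed inequality.

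The main obstacle, and the part whose resolution drives the whole argument, is identifying the basis $g_\beta$: it is defined in terms of both $Y$ and $X$, yet is the right object because it simultaneously (i) diagonalises the conditional second moment used to lower-bound $\EE_\QQ[f^2]$ and (ii) pairs up with the recursion~\eqref{eq:r-def} so that $\EE_\PP[f]$ factors through the $r_\beta$ against the very same $\widetilde f_\beta(X)$. The role of $X_i\in[\tau_0,\tau_1]$ is solely to enforce $X_i(1-X_i)\ge \tau_0(1-\tau_1)$, and the factor $(\tau_0(1-\tau_1))^{|\alpha|}$ in the statement descends directly from this uniform lower bound; I do not expect any slack beyond this single estimate.
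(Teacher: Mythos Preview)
Your argument is correct. The paper takes a different route: it defers to the proof of Theorem~2.7 in~\cite{SW-estimation} after an affine change of variables $\widetilde X_i=(\mu+1/\mu)X_i-1/\mu$ with $\mu=\sqrt{(1-\tau_1)/\tau_0}$, obtains a bound in terms of $r_\alpha(\widetilde X)$, and then invokes the shift and scale lemmas (Lemmas~\ref{lem:constantshift} and~\ref{lem:scalar}) to rewrite $r_\alpha(\widetilde X)=(\mu+1/\mu)^{|\alpha|}r_\alpha(X)$; the factor $(\tau_0(1-\tau_1))^{-|\alpha|}$ then drops out of the identity $(\mu+1/\mu)=(1+\tau_0-\tau_1)/\sqrt{\tau_0(1-\tau_1)}$. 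Your approach is more direct and fully self-contained: working with the conditionally centered basis $g_\beta(Y,X)=\prod_{i\in\beta}(Y_i-X_i)$ you never change variables, so you need neither the shift/scale lemmas nor the external reference, and the pairing between the recursion~\eqref{eq:r-def} and the conditional orthogonality of the $g_\beta$ is made completely explicit. The paper's route is more modular (it recycles machinery built for the estimation problem in~\cite{SW-estimation}); yours exposes the mechanism in a single page and shows exactly where the single lossy step $X_i(1-X_i)\ge\tau_0(1-\tau_1)$ enters.
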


\paragraph{Combinatorial properties of the $r_\alpha$.}
The upshot of the two propositions above is that to show hardness of distinguishing $\mathbb{P}$ versus $\mathbb{Q}$, it suffices to bound the recursively defined $r_\alpha$. This task is made easier by indentifying combinatorial properties that the $r_\alpha$ enjoy. In Section~\ref{sec:r_algebra}, we show general results for how properties of the probability spaces transfer to behaviour of the $r_\alpha$.
We may consider $\alpha \in \mathbb{N}^N$ as a multigraph (see Section~\ref{subsec:alpha_as_graph}), and we word the results in this language. Loosely speaking, the results we present are as follows:
\begin{itemize}
\item If $\mathbb{P}$ and $\mathbb{Q}$ are multiplicative for disjoint graphs $\alpha$ and $\beta$ then $r_{\alpha\cup\beta}=r_\alpha r_\beta$ (Lemma~\ref{lem:multiply}).

\item If $\mathbb{E}_\mathbb{P}[X^\tau]=\mathbb{E}_\mathbb{Q}[X^\tau]$ for all trees $\tau$, then the $r$ value is zero on trees (Lemma~\ref{lem:forest}).

\item The $r$-values are indifferent to constant shifts to $X$ (Lemma~\ref{lem:constantshift}).

\item If we scale $X$ by a constant factor $c$ to construct $\widetilde{r}$, then $\widetilde{r}_\alpha=c^{|\alpha|}r_\alpha$ (Lemma~\ref{lem:scalar}).
\end{itemize}

\paragraph{Putting it all together.}

In Section~\ref{sec:proof}, we pivot back to considering our particular probability spaces $\mathbb{P}$ and $\mathbb{Q}$ and calculate the expected value of $X^\alpha$ as a function of properties of the graph $\alpha$ (see Lemma~\ref{lem:integralgenx}). This, together with the multiplicative and tree results for the $r_\alpha$, allow us to bound  $\Adv$ in the Gaussian case. The scaling and shifting properties of the $r_\alpha$ are used to show we can deduce the graph case from the Gaussian case.

\subsection{New contributions in the extended version of this paper.}\label{sec:new}
This is an extended version of a paper accepted to Innovations in Theoretical Computer Science (ITCS). In that paper, we introduced a low-degree framework for testing \emph{between two planted models} and, in this extended version of the paper, we establish a formal connection of that framework to low degree \emph{recovery}. See Sections~\ref{sec:formal_equiv} and~\ref{sec:recoversSWrecovery} for discussion and proofs, respectively. Proposition~\ref{prop.equivs} establishes that any low-degree recovery problem is formally equivalent to a low-degree problem of testing between two planted models. Thus, our framework for testing between planted distributions extends that of recovery. 

The non-recursive definition for the $r_\alpha$ in~\eqref{eq:r_non_rec} is also new, as well as their relation to the $\kappa_\alpha$ introduced in~\cite{SW-estimation} -- see Remark~\ref{rem:recoverSWrecovery}. 

In a very different connection of our results to recovery, in Theorem~\ref{thm:reduction_from1vs2}, we prove a reduction from weak recovery for a planted dense submatrix (PDS) to testing between the 1 or 2 community models. The reduction holds for problem parameters under which we proved hardness for testing between 1 and~2 communities; hence, giving an alternate indication of hardness for weak recovery in the PDS problem. Recall that a well established detection--recovery gap has been shown for the PDS model. Therefore, a reduction to the testing problem between a PDS and the standard null distribution (with independent mean zero Gaussian entries) is not sufficient to show hardness.

\needspace{3\baselineskip}
\section{The algebra of planted vs planted}
\label{sec:r_algebra}

The recursively defined $r_\alpha$ play a central role in our proof. By  Proposition~\ref{prop:adv-gauss}, the `advantage' $\Adv$ is bounded above by a sum of squares of the $r_\alpha$; therefore, to show low-degree hardness for a distinguishing problem, it is enough to control the size of this sum of squares. In this section, we explore the combinatorial behaviour of these $r_\alpha$ and show that they exhibit very nice properties under only mild assumptions on the probability spaces $P$ and $Q$. (We will write $P$ and $Q$ for the probability spaces in this section, both to emphasize that these results hold for general probability spaces and to ease the notational burden.) We assume throughout that both $P$ and $Q$ are symmetric, i.e.\ they are supported on $X$ for which $X_{ij} = X_{ji}$.

\subsection{The graph interpretation}\label{subsec:alpha_as_graph}
As in \cite{SW-estimation}, it will be convenient to think of $\alpha \in \mathbb{N}^N$ as a multigraph, possibly with self-loops, on the vertex set $[n]$, where $N = n(n+1)/2$ and we take $\alpha_{ij}$, for each $i \leq j$, to be the number of edges between vertices $i$ and $j$. For example, for $n=3$, $N=6$ and if we fix the order to be $\alpha=(\alpha_{11}, \alpha_{12}, \alpha_{22}, \alpha_{13}, \alpha_{23}, \alpha_{33})$ then $(0,2,0,1,1,0)$ is the graph $\dtrian$ and, for $n=2$, $N=3$, $(1,1,0)$ is the graph $\dlopsidedlollipop$ a single edge with a loop at one vertex.
For graphs $\alpha$ and~$\beta$, we consider $\beta$ to be a subgraph of $\alpha$, denoted $\beta \subseteq \alpha$, if the labelled edge set of $\beta$ is a subset of the labelled edge set of $\alpha$. For example, for the graph $\alpha=\dtrian$, the graphs $\beta=\dcherrya$ and $\beta'=\dcherryb$ are distinct subgraphs. For graph $\alpha$ and subgraph $\beta\subseteq \alpha$, define $\alpha \backslash \beta$ to be the graph obtained from $\alpha$ by first removing the labelled edges in $\beta$ and then removing isolated vertices - e.g.\ $\trian \backslash \cherry = \hedge$ (not $\dumbedge$). Similarly, let $\alpha \cap \beta$ denote the graph obtained by first taking the intersection of both graphs and then deleting any isolated vertices. 

The usefulness of considering graphs with labelled edge sets is that it simplifies the expression for the recursion in the definition of $r_\alpha$. To avoid confusion, write $v_\alpha$ for the vector that maps to the graph $\alpha$. Note, first, that if $v_\beta \leq v_\alpha$, then $\beta \subseteq \alpha$ and vice-versa. However, the counts are different. For fixed $\alpha$ and $\beta$, with $v_\beta \leq v_\alpha$, there are $\binom{v_\alpha}{v_\beta}$ many distinct edge-labelled graphs $\beta'$ such that $\beta'\subseteq \alpha$ and $v_{\beta'}=v_\beta$.
Hence, for edge-labelled graphs, the equivalent recursive definition to \eqref{eq:r-def} is as follows, where the sum is over edge-labelled subgraphs. 
For graph $\alpha$, the term $r_\alpha$ is defined recursively by 
\begin{equation}\label{eq:r_def_graphs}
r_\alpha = \E_P\left[X^\alpha\right] - \sum_{\varnothing \subseteq \beta \subsetneq \alpha} r_{\beta} \, \E_Q \left[X^{\alpha \backslash \beta}\right],
\end{equation}
starting from the base case of the empty graph $r_\varnothing=1$.
For example, if the probability spaces are exchangeable (i.e.\ $\beta=\cherry$ and $\beta'=\sidecherry$ etc.\ have the same expectations under both $P$ and $Q$) then 
\[ r_{\trian} = \E_P\left[X^{\trian}\right] - \E_Q\left[X^{\trian}\right]  -3r_{\edge}\, \E_Q\left[X^{\cherry}\right] -3r_{\cherry}\, \E_Q\left[X^{\edge}\right].\] 
{We may also define the $r_\alpha$ non-recursively as follows --- see Remark~\ref{rem:non_recursive} and the end of this section where we show the equivalence of the two definitions. For any set $\alpha$ let $\mathcal{P}(\alpha)$ be the set of partitions of $\alpha$. For partition $\tau$ let $|\tau|$ denote the number of parts and $\gamma \in \tau$ indicates that the set $\gamma$ is a part in the partition~$\tau$. (For example $\mathcal{P}(\{1,2,3\})$ is the set with elements $\{1,2,3\}$, $\{1\}\{2,3\}$,  $\{2\}\{1,3\}$,  $\{1,2\}\{3\}$ and $\{1\}\{2\}\{3\}$ and if $\tau=\{2\}\{1,3\}$ then $|\tau|=2$ and $\{2\},\{1,3\} \in \tau$.)
\begin{equation}\label{eq:r_non_rec} r_\alpha 
= \sum_{\varnothing \subseteq \delta \subseteq \alpha} \E_{P}[X^\delta] \sum_{\tau \in \mathcal{P}(\alpha\backslash \delta)} (-1)^{|\tau|} \;|\tau|! \prod_{\gamma \in \tau}\E_Q[X^{\gamma}] 
\end{equation}
}

We will use the notion of edge-labelled subgraphs, denoted $\subseteq$, to aid the proofs, but for the rest of the paper we consider $\alpha \in \mathbb{N}^N$, or equivalently $\alpha$ a graph without edge labels, and denote by~$\leq$ the non-labelled subset or subgraph relation.

\subsection{Combinatorial properties of the $r$-values} 
We will be interested in how properties of the probability spaces transfer to the behaviour of $r_\alpha$. We will see that the $r_\alpha$ behave multiplicatively over taking disjoint unions if the following property holds for $P$ and $Q$. Let $\alpha \cup \beta$ denote the disjoint union of $\alpha$ and $\beta$. We say the probability space $A$ is multiplicative over disjoint unions if \eqref{disconnect_indep} holds:
\begin{equation}\label{disconnect_indep}\E_A\left[X^{\alpha\cup\beta}\right]= \E_A\left[X^{\alpha}\right]\E_A\left[X^{\beta}\right] \mbox{ for any graphs $\alpha$ and $\beta$.}
\end{equation} 

\begin{lemma}\label{lem:multiply} Suppose $P$ and $Q$ are symmetric and multiplicative over disjoint unions, i.e.\ they satisfy \eqref{disconnect_indep}, then for any $\alpha$ and $\beta$, we have $r_{\alpha \cup \beta}=r_\alpha \, r_\beta.$
\end{lemma}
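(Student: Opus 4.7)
The plan is to induct on $|\alpha|+|\beta|$, the total number of edges, working throughout with the edge-labelled form of the recursion given in \eqref{eq:r_def_graphs} (rather than the vector form \eqref{eq:r-def}). The base case, where $\beta=\varnothing$ (or symmetrically $\alpha=\varnothing$), is immediate: $\alpha\cup\varnothing=\alpha$ and $r_\varnothing=1$, so $r_{\alpha\cup\varnothing}=r_\alpha=r_\alpha\,r_\varnothing$.

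For the inductive step, I would apply \eqref{eq:r_def_graphs} to $\alpha\cup\beta$ and exploit the key combinatorial observation that, since $\alpha$ and $\beta$ sit on disjoint vertex sets, every edge-labelled subgraph $\gamma\subseteq\alpha\cup\beta$ decomposes \emph{uniquely} as $\gamma=\gamma_1\cup\gamma_2$ with $\gamma_1\subseteq\alpha$ and $\gamma_2\subseteq\beta$. Moreover $(\alpha\cup\beta)\setminus\gamma=(\alpha\setminus\gamma_1)\cup(\beta\setminus\gamma_2)$, and the strict inclusion $\gamma\subsetneq\alpha\cup\beta$ is exactly the condition $(\gamma_1,\gamma_2)\neq(\alpha,\beta)$. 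Multiplicativity \eqref{disconnect_indep} applied to $P$ and $Q$ then factors every expectation in sight: $\E_P[X^{\alpha\cup\beta}]=\E_P[X^\alpha]\E_P[X^\beta]$ and $\E_Q[X^{(\alpha\cup\beta)\setminus\gamma}]=\E_Q[X^{\alpha\setminus\gamma_1}]\E_Q[X^{\beta\setminus\gamma_2}]$.

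The induction hypothesis applied to each proper subgraph $\gamma$ (for which $|\gamma_1|+|\gamma_2|<|\alpha|+|\beta|$) replaces $r_\gamma$ by $r_{\gamma_1}r_{\gamma_2}$, so the entire sum in \eqref{eq:r_def_graphs} factors. Now \eqref{eq:r_def_graphs} rearranged separately for $\alpha$ and $\beta$ gives $\sum_{\gamma_1\subseteq\alpha}r_{\gamma_1}\,\E_Q[X^{\alpha\setminus\gamma_1}]=\E_P[X^\alpha]$, where the $\gamma_1=\alpha$ term contributes $r_\alpha\cdot\E_Q[X^\varnothing]=r_\alpha$, and similarly for $\beta$. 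Hence the complete factored sum over \emph{all} pairs $(\gamma_1,\gamma_2)$ equals $\E_P[X^\alpha]\E_P[X^\beta]$, and subtracting only the missing pair $(\alpha,\beta)$ yields
$$r_{\alpha\cup\beta}=\E_P[X^\alpha]\E_P[X^\beta]-\bigl(\E_P[X^\alpha]\E_P[X^\beta]-r_\alpha r_\beta\bigr)=r_\alpha r_\beta,$$
as required.

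The main obstacle I anticipate is purely bookkeeping: verifying cleanly the bijection between subgraphs of $\alpha\cup\beta$ and pairs of subgraphs of the parts, and checking that the removal operation $\cdot\setminus\cdot$ (which also deletes isolated vertices) interacts correctly with disjoint unions. This is precisely where the edge-labelled formulation \eqref{eq:r_def_graphs} pays off: in the vector form \eqref{eq:r-def} one would additionally have to check that $\binom{v_{\alpha\cup\beta}}{v_\gamma}$ splits as $\binom{v_\alpha}{v_{\gamma_1}}\binom{v_\beta}{v_{\gamma_2}}$, whereas in the edge-labelled framework this splitting is a tautology.
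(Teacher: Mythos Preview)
Your proof is correct and uses the same essential ingredients as the paper's: induction on total edge count, the bijection between edge-labelled subgraphs $\gamma\subseteq\alpha\cup\beta$ and pairs $(\gamma_1,\gamma_2)$ with $\gamma_i$ sitting in the respective parts, multiplicativity \eqref{disconnect_indep} to factor the expectations, and the inductive hypothesis to factor $r_\gamma=r_{\gamma_1}r_{\gamma_2}$.

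The organization differs, and yours is arguably tidier. The paper introduces the auxiliary quantity $z_\gamma:=\E_P[X^\gamma]-r_\gamma$ and proves the identity $z_{\alpha\cup\beta}=r_\alpha z_\beta+r_\beta z_\alpha+z_\alpha z_\beta$ by partitioning the sum over $\gamma\subsetneq\alpha\cup\beta$ into three pieces according to whether $\gamma_1=\alpha$, $\gamma_2=\beta$, or neither. You instead rewrite the recursion once and for all as the closed identity
\[
\E_P[X^\alpha]=\sum_{\gamma\subseteq\alpha}r_\gamma\,\E_Q[X^{\alpha\setminus\gamma}],
\]
then observe that the double sum over all pairs $(\gamma_1,\gamma_2)$ factors exactly into $\E_P[X^\alpha]\E_P[X^\beta]$, so that subtracting the single missing term $(\alpha,\beta)$ leaves $r_\alpha r_\beta$. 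This ``complete the sum'' manoeuvre avoids the auxiliary $z$ and the three-way case split, and your base case $\beta=\varnothing$ is simpler than the paper's two-disjoint-edges computation. The paper's $z$-formulation, on the other hand, makes the structure $r_{\alpha\cup\beta}-r_\alpha r_\beta = -(z_{\alpha\cup\beta}-r_\alpha z_\beta-r_\beta z_\alpha-z_\alpha z_\beta)$ visible, which some readers may find conceptually helpful. Either way, the mathematics is the same.
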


\begin{proof} We proceed by induction on the number of edges. \noindent\emph{Base case.} Suppose $\alpha$ consists of two disjoint edges, denoted by $\edge$ and $\edgedotted$. Then from \eqref{eq:r_def_graphs}, \[r_{\edge  \; \edgedotted} = \E_P\left[X^{\edge}\right]\E_P\left[X^{\edgedotted}\right]-r_\varnothing \, \E_Q\left[X^{\edge}\right]\E_Q\left[X^{\edgedotted}\right]-r_{\edge} \, \E_Q\left[X^{\edgedotted}\right]-r_{\edgedotted} \, \E_Q\left[X^{\edge}\right], \] 
where we used the multiplicative property of \eqref{disconnect_indep} to deduce $\E_P\left[X^{\edge \; \edgedotted}\right]=\E_P\left[X^{\edge}\right]\E_P\left[X^{\edgedotted}\right]$ and, similarly, $\E_Q\left[X^{\edge \; \edgedotted}\right]=\E_Q\left[X^{\edge}\right]\E_Q\left[X^{\edgedotted}\right]$. Now substituting $r_\varnothing=1$ along with $r_{\edge}=\E_P\left[X^{\edge}\right]-\E_Q\left[X^{\edge}\right]$ and the corresponding expression for $r_{\edgedotted}$, we get \[r_{\edge  \; \edgedotted} = \E_P\left[X^{\edge}\right]\E_P\left[X^{\edgedotted}\right]-\E_P\left[X^{\edge}\right] \E_Q\left[X^{\edgedotted}\right]-\E_P\left[X^{\edgedotted}\right] \E_Q\left[X^{\edge}\right]+\E_Q\left[X^{\edge}\right]\E_Q\left[X^{\edgedotted}\right]=r_{\edge} \, r_{\edgedotted}.\]
\noindent\emph{Inductive step.} Fix $\tau=\alpha\cup\beta$ (where $\alpha, \beta$ aare disjoint) and assume the factorization of $r$ holds for graphs with fewer than $|\tau|=|\alpha|+|\beta|$ edges. For any graph $\gamma$ define $z_\gamma$ by $z_\gamma:=\E_P\left[X^\gamma\right]- r_\gamma$. Then, first note
\[ z_\alpha \, z_\beta = \E_P\left[X^\alpha\right]\E_P\left[X^\beta\right] - r_\alpha \, \E_P\left[X^\beta\right] - r_\beta\, \E_P\left[X^\alpha \right] + r_\alpha \, r_\beta, \]
and that because $\alpha$ and $\beta$ are disjoint and $P$ satisfies \eqref{disconnect_indep}, we have $\E_P\left[X^\alpha\right]\E_P\left[X^\beta\right]=\E_P\left[X^{\alpha \cup \beta}\right]$. Hence,
\[ z_\alpha \, z_\beta = \E_P\left[X^{\alpha \cup \beta}\right] - r_\alpha \, \E_P\left[X^\beta\right] - r_\beta \, \E_P\left[X^\alpha \right] + r_\alpha \,r_\beta, \]
and now (back) substituting $\E_P\left[X^\alpha\right]=r_\alpha+z_\alpha$ and $\E_P\left[X^\beta\right]=r_\beta+z_\beta$ we find
\[ r_\alpha \, r_\beta= \E_P\left[X^{\alpha \cup \beta}\right]-r_\alpha \, z_\beta - r_\beta \, z_\alpha - z_\alpha \, z_\beta. \]
By definition, $r_{\alpha \cup \beta}=\E_P\left[X^{\alpha \cup \beta}\right]-z_{\alpha \cup \beta}$; therefore, to complete the proof, it suffices to show the identity 
\begin{equation}\label{eq:identity}
    z_{\alpha \cup \beta} = z_\beta \, r_\alpha +z_\alpha \, r_\beta +z_\alpha \, z_\beta.
\end{equation}
Again, write $\tau=\alpha \cup \beta$ and note that by the definitions of $r_\tau$ and $z_\tau$, we have
\begin{equation}\label{eq:zG}
z_\tau=  \E_P\left[X^\tau\right] - r_\tau = \sum_{\varnothing \subseteq \gamma \subsetneq \tau} r_{\gamma} \E_Q\left[X^{\tau\backslash \gamma}\right]. 
\end{equation}
Now observe that for any $\gamma \subsetneq \tau$, because $\tau=\alpha \cup \beta$, we have $\gamma=\gamma_\alpha \cup \gamma_\beta$ where $\gamma_\alpha=\gamma \cap \alpha$ and $\gamma_\beta=\gamma \cap \beta$. Thus, $\gamma$ is a disjoint union of $\gamma_\alpha$ and $\gamma_\beta$ with strictly fewer total edges than $\alpha \cup \beta$. Therefore, $(\alpha \cup \beta) \backslash \gamma = (\alpha \backslash \gamma_\alpha) \cup (\beta \backslash \gamma_\beta)$ where $\alpha \backslash \gamma_\alpha$ and $\beta \backslash \gamma_\beta$ are disjoint, so by assumption, $\E_Q\left[X^{(\alpha \cup \beta) \backslash \gamma}\right] = \E_Q\left[X^{\alpha \backslash \gamma_\alpha}\right] \E_Q\left[X^{\beta \backslash \gamma_\beta}\right]$ and  by the inductive hypothesis $r_{\gamma}=r_{\gamma_\alpha}r_{\gamma_\beta}$. Hence, for any fixed $\gamma \subsetneq \alpha \cup \beta$,
\begin{equation}\label{eq:w1} r_\gamma \, \E_Q\left[X^{(\alpha \cup \beta) \backslash \gamma}\right] = r_{\gamma_\alpha} \, \E_Q\left[X^{\alpha \backslash \gamma_\alpha}\right] \, r_{\gamma_\beta} \, \E_Q\left[X^{\beta \backslash \gamma_\beta}\right].
\end{equation}
There are two special cases for \eqref{eq:w1}. If $\gamma_\alpha=\alpha$, then 
\begin{equation}
\label{eq:w1a}
r_{\gamma} \, \E_Q\left[X^{(\alpha \cup \beta)\backslash \gamma}\right]=r_{\alpha}\, r_{\gamma_\beta} \,\E_Q\left[X^{\beta \backslash \gamma_\beta}\right],
\end{equation}
and symmetrically for the case $\gamma_\beta=\beta$. Note that because $\gamma$ is a strict subgraph of $\alpha \cup \beta$ either of $\gamma_\alpha=\alpha$ or $\gamma_\beta=\beta$ may hold but not both. 

In the expression for $z_{\tau} = z_{\alpha \cup \beta}$ in \eqref{eq:zG} we take the sum over $\{ \gamma : \varnothing \subseteq \gamma \subsetneq \alpha \cup \beta \}$  and partition it into sums over the sets $S_1=\{ \gamma : \gamma_\alpha = \alpha,\; \varnothing \subseteq \gamma_\beta \subsetneq \beta \}$, $S_2=\{ \gamma : \varnothing \subseteq \gamma_\alpha \subsetneq \alpha,\; \gamma_\beta = \beta \}$ and $S_3=\{ \gamma : \varnothing \subseteq \gamma_\alpha \subsetneq \alpha, \; \varnothing \subseteq \gamma_\beta \subsetneq \beta \}$. We begin with $S_1$. By \eqref{eq:w1a},
\begin{equation}
\label{eq:razb}
\sum_{\gamma \in S_1} r_{\gamma} \, \E_Q\left[X^{(\alpha \cup \beta)\backslash \gamma}\right] = \sum_{ \varnothing \subseteq \gamma_\beta \subsetneq \beta } r_\alpha\,  r_{\gamma_\beta} \, \E_Q\left[X^{\beta \backslash \gamma_\beta}\right] = r_\alpha \, z_\beta.
\end{equation}
The final step uses that $z_\beta = \sum_{\varnothing \subseteq \gamma_\beta \subsetneq \beta } r_{\gamma_\beta} \, \E_Q\left[X^{\beta \backslash \gamma_\beta}\right]$, similar to \eqref{eq:zG}.
Next, taking the sum over $S_2$ yields $r_\beta \, z_\alpha$ in the same way as in \eqref{eq:razb}. Lastly, the sum over $S_3$ is given by
\begin{equation}
\label{eq:zazb}
\sum_{\gamma \in S_3} r_{\gamma } \, \E_Q\left[X^{(\alpha \cup \beta)\backslash \gamma}\right] = \sum_{\varnothing \subseteq \gamma_\alpha \subsetneq \alpha, \, \varnothing \subseteq \gamma_\beta \subsetneq \beta}  r_{\gamma_\alpha} \, \E_Q\left[X^{\alpha \backslash \gamma_\alpha}\right] \, r_{\gamma_\beta} \, \E_Q\left[X^{\beta \backslash \gamma_\beta}\right] = z_\alpha \, z_\beta.
\end{equation}
By \eqref{eq:zG}, we see that $z_{\tau} = z_{\alpha \cup \beta}$ can be obtained as the following sum:
\begin{align*}
z_{\alpha \cup \beta} = z_\tau= \sum_{\varnothing \subseteq \gamma \subsetneq \tau} r_{\gamma} \E_Q\left[X^{\tau\backslash \gamma}\right] &= \sum_{\gamma \in S_1} r_{\gamma} \E_Q\left[X^{\tau\backslash \gamma}\right] + \sum_{\gamma \in S_2} r_{\gamma} \E_Q\left[X^{\tau\backslash \gamma}\right] + \sum_{\gamma \in S_3} r_{\gamma} \E_Q\left[X^{\tau\backslash \gamma}\right] \\
&= r_\alpha \, z_\beta+ r_\beta \, z_\alpha+z_\alpha \, z_\beta.
\end{align*}
where in the final step above we have used results \eqref{eq:razb}-\eqref{eq:zazb}. Thus, we find \eqref{eq:identity}, confirming the identity as required.
\end{proof}

\begin{lemma}\label{lem:forest} For all $\tau$ where $\tau$ is a forest, meaning a graph with no cycles, suppose that $P$ and $Q$ satisfy $\E_P\left[X^\tau\right]=\E_Q\left[X^\tau\right].$ Then, $r_\alpha=0$ for any forest graph $\alpha$.\end{lemma}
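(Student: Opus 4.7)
The plan is to induct on the number of edges $|\alpha|$, with the understanding that the lemma concerns forests with at least one edge (the empty graph satisfies $r_\varnothing = 1$ by convention and is trivially excluded).

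For the base case $|\alpha| = 1$, take $\alpha = \edge$. The recursion \eqref{eq:r_def_graphs} has only the empty subgraph in its sum, so it collapses to $r_{\edge} = \E_P[X^{\edge}] - r_\varnothing\, \E_Q[X^{\edge}]$. Using $r_\varnothing = 1$ together with the hypothesis applied to the (one-edge) forest $\tau = \edge$, this equals zero.

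For the inductive step, I would fix a forest $\alpha$ with $|\alpha| \geq 2$ and assume $r_\beta = 0$ for every non-empty forest $\beta$ with $|\beta| < |\alpha|$. The key observation is that every edge-labelled subgraph $\beta \subseteq \alpha$ is itself a forest, since deleting edges from a forest cannot produce a cycle. Hence the inductive hypothesis applies to every non-empty proper subgraph appearing in the recursion. Pulling the $\beta = \varnothing$ term out of \eqref{eq:r_def_graphs} gives
\[
r_\alpha \;=\; \E_P[X^\alpha] \;-\; \E_Q[X^\alpha] \;-\; \sum_{\varnothing \neq \beta \subsetneq \alpha} r_\beta \, \E_Q[X^{\alpha \backslash \beta}].
\]
Each summand on the right vanishes by the inductive hypothesis, while the first two terms cancel by the hypothesis of the lemma applied to the forest $\tau = \alpha$. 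Therefore $r_\alpha = 0$.

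I do not anticipate any genuine obstacle here. The argument is in fact simpler than that of Lemma~\ref{lem:multiply}: it only exploits the fact that the property ``is a forest'' is closed under edge deletion, which immediately makes the inductive hypothesis available for every subgraph appearing in the recursion. The sole bookkeeping point is to separate the $\beta = \varnothing$ contribution from the sum so that the cancellation furnished by the equality $\E_P[X^\alpha] = \E_Q[X^\alpha]$ becomes explicit.
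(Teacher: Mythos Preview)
Your proposal is correct and follows essentially the same induction on $|\alpha|$ as the paper's proof. If anything, you are slightly more careful: the paper's inductive step asserts $r_\beta=0$ for all $\beta\subsetneq\alpha$ without explicitly exempting $\beta=\varnothing$, whereas you correctly separate that term and use $r_\varnothing=1$ to produce the $\E_Q[X^\alpha]$ needed for the cancellation.
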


\begin{proof}
The proof is almost immediate by induction on the number of edges. For the base case we note $r_{\edge}=  \E_P\left[X^{\edge}\right]-\E_Q\left[X^{\edge}\right]=0$. For any fixed forest $\alpha$ and $\beta \subsetneq \alpha$, the graph $\beta$ is a forest on strictly fewer edges and so by induction $r_{\beta}=0$, but then $r_\alpha=\E_P\left[X^\alpha\right]-\E_Q\left[X^\alpha\right]=0$. 
\end{proof}

We also show that one can add a constant shift to the distribution without changing the values of the $r_\alpha$. The proof is somewhat technical, so we relegate it to Section~\ref{sec:additional-proofs}.
\begin{lemma}\label{lem:constantshift} 
Let $\widetilde{X}$ be defined by $\widetilde{X}_{ij}=X_{ij}+y_{ij}$ where $y_{ij} \in \mathbb{R}$ is non-random for each pair $i,j$. For any probability spaces $P$ and $Q$, let $r_\alpha=r_\alpha(P,Q,X)$ and $\widetilde{r}_\beta=\widetilde{r}_\beta(P,Q,\widetilde{X})$. Then, for all $\gamma$, we have that $r_\gamma=\widetilde{r}_\gamma.$
\end{lemma}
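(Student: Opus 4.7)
The plan is to prove this by strong induction on $|\gamma| = \sum_i \gamma_i$, showing that replacing $X$ by $\widetilde{X}=X+y$ preserves the recursion \eqref{eq:r-def}. The base case $\gamma = 0$ is immediate since the sum in \eqref{eq:r-def} is empty and $X^0 = \widetilde{X}^0 = 1$, giving $r_0 = \widetilde{r}_0 = 1$.

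For the inductive step, fix $\gamma$ and assume $\widetilde{r}_\beta = r_\beta$ for every $\beta \lneq \gamma$. The first step is to expand $\widetilde{X}^\gamma = \prod_i (X_i + y_i)^{\gamma_i}$ via the multivariate binomial theorem and apply linearity of expectation:
\[\E_P[\widetilde{X}^\gamma] = \sum_{0 \le \delta \le \gamma} \binom{\gamma}{\delta} \, y^{\gamma-\delta} \, \E_P[X^\delta],\]
and analogously for $\E_Q[\widetilde{X}^{\gamma-\beta}]$. The second step is to rewrite the original recursion in the convenient form $\E_P[X^\delta] = \sum_{0 \le \beta \le \delta} r_\beta \binom{\delta}{\beta} \E_Q[X^{\delta-\beta}]$ (the $\beta = \delta$ term contributes exactly $r_\delta$, so this absorbs \eqref{eq:r-def} into a single clean sum) and substitute this into the expansion of $\E_P[\widetilde{X}^\gamma]$.

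The key manipulation is then to swap the order of summation over $(\delta, \beta)$, apply the coordinate-wise identity $\binom{\gamma}{\delta}\binom{\delta}{\beta} = \binom{\gamma}{\beta}\binom{\gamma-\beta}{\delta-\beta}$, and re-index the inner sum by $\mu = \delta-\beta$. The inner sum over $\mu$ reassembles (by the binomial theorem run in reverse) into $\E_Q[\widetilde{X}^{\gamma-\beta}]$, yielding
\[\E_P[\widetilde{X}^\gamma] = \sum_{0 \le \beta \le \gamma} r_\beta \binom{\gamma}{\beta} \E_Q[\widetilde{X}^{\gamma-\beta}].\]
Isolating the $\beta = \gamma$ term gives $\E_P[\widetilde{X}^\gamma] - \sum_{0 \le \beta \lneq \gamma} r_\beta \binom{\gamma}{\beta} \E_Q[\widetilde{X}^{\gamma-\beta}] = r_\gamma$. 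By the inductive hypothesis $r_\beta = \widetilde{r}_\beta$ for $\beta \lneq \gamma$, the left-hand side is exactly the defining recursion for $\widetilde{r}_\gamma$, so $\widetilde{r}_\gamma = r_\gamma$.

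The argument uses no probabilistic input beyond linearity of expectation; it is purely bookkeeping with multi-index binomial identities. The only potential obstacle is keeping the multi-index notation clean during the swap-and-reindex manoeuvre, but since all the identities involved hold coordinate-wise this reduces to unfolding definitions in the correct order.
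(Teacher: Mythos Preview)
Your proof is correct and follows essentially the same approach as the paper's: strong induction on $|\gamma|$, binomial expansion of $\widetilde{X}^\gamma$, a swap of the order of summation, and the inductive hypothesis to identify the result with the defining recursion for $\widetilde{r}_\gamma$. Your packaging is slightly cleaner than the paper's---you absorb the recursion into the single identity $\E_P[X^\delta] = \sum_{\beta \le \delta} r_\beta \binom{\delta}{\beta} \E_Q[X^{\delta-\beta}]$ and use the trinomial-coefficient identity $\binom{\gamma}{\delta}\binom{\delta}{\beta} = \binom{\gamma}{\beta}\binom{\gamma-\beta}{\delta-\beta}$ directly, whereas the paper works in the edge-labelled graph language and carries the $\E_P-\E_Q$ difference separately---but the underlying argument is the same.
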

The following lemma concerns the effect on $r$ of scaling.
\begin{lemma}\label{lem:scalar} Fix $a \in \mathbb{R}$ and $a \neq 0$. Let $\widetilde{X}$ be defined by $\widetilde{X}_{ij}=aX_{ij}$. Then for any probability spaces $P$ and $Q$, for $r_\alpha=r_\alpha(P,Q,X)$ and $\widetilde{r}_\beta=\widetilde{r}_\beta(P,Q,\widetilde{X})$, and for all $\gamma$, we have that $\widetilde{r}_\gamma = a^{|\gamma|} \, r_\gamma,$
where $|\gamma|$ equals the number of edges in the graph $\gamma$, i.e.\ $|\gamma|= |E(\gamma)|$.
\end{lemma}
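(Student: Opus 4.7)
The plan is to proceed by induction on $|\gamma|$, the number of edges in $\gamma$, using the edge-labelled recursive formula \eqref{eq:r_def_graphs}. The key observation is the simple multiplicative behavior of the monomial $X^\alpha$ under the rescaling: since $\widetilde{X}_{ij} = a X_{ij}$, we have
\[ \widetilde{X}^\alpha \;=\; \prod_{i\le j} \widetilde{X}_{ij}^{\alpha_{ij}} \;=\; \prod_{i\le j} a^{\alpha_{ij}} X_{ij}^{\alpha_{ij}} \;=\; a^{|\alpha|} X^\alpha, \]
and therefore $\EE_P[\widetilde{X}^\alpha] = a^{|\alpha|}\EE_P[X^\alpha]$, and likewise $\EE_Q[\widetilde{X}^{\alpha\setminus\beta}] = a^{|\alpha|-|\beta|}\EE_Q[X^{\alpha\setminus\beta}]$ for $\beta\subseteq\alpha$.

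For the base case $\gamma = \varnothing$, both $r_\varnothing$ and $\widetilde{r}_\varnothing$ equal $1$, matching $a^{|\varnothing|}=a^0=1$. For the inductive step, fix $\alpha$ with $|\alpha|\ge 1$ and assume $\widetilde{r}_\beta = a^{|\beta|} r_\beta$ for every edge-labelled $\beta \subsetneq \alpha$. Applying the recursive definition \eqref{eq:r_def_graphs} to $\widetilde{X}$ and substituting the identities above gives
\[ \widetilde{r}_\alpha \;=\; a^{|\alpha|}\EE_P[X^\alpha] \;-\; \sum_{\varnothing\subseteq\beta\subsetneq\alpha} a^{|\beta|} r_\beta \cdot a^{|\alpha|-|\beta|} \EE_Q[X^{\alpha\setminus\beta}] \;=\; a^{|\alpha|}\!\left(\EE_P[X^\alpha] - \sum_{\varnothing\subseteq\beta\subsetneq\alpha} r_\beta \,\EE_Q[X^{\alpha\setminus\beta}]\right) \;=\; a^{|\alpha|} r_\alpha, \]
which completes the induction.

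There is no real obstacle here: the scaling is purely monomial, and the exponents in the two factors $a^{|\beta|}$ (from the inductive hypothesis) and $a^{|\alpha|-|\beta|}$ (from the $\EE_Q$ term) combine cleanly to the uniform prefactor $a^{|\alpha|}$, so it can be pulled outside the recursion. The assumption $a\ne 0$ is not even needed for the argument, though it is presumably included in the statement so that scaling is invertible for downstream applications. Since the result holds for every edge-labelled $\gamma$, it descends to the unlabelled graph $\gamma\in\NN^N$ used in the rest of the paper.
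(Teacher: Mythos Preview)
Your proof is correct and essentially identical to the paper's own argument: both proceed by induction on $|\alpha|$, use $\widetilde{r}_\varnothing = r_\varnothing = 1$ as the base case, and then combine the inductive hypothesis with the monomial scaling $\EE[\widetilde{X}^\gamma] = a^{|\gamma|}\EE[X^\gamma]$ so that the factors $a^{|\beta|}$ and $a^{|\alpha|-|\beta|}$ merge into $a^{|\alpha|}$. The only cosmetic difference is that the paper writes the recursion with the summation variable complemented (using $\widetilde{r}_{\alpha\setminus\beta}\,\EE_Q[\widetilde{X}^\beta]$ rather than $\widetilde{r}_\beta\,\EE_Q[\widetilde{X}^{\alpha\setminus\beta}]$), which is just a relabeling.
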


\begin{proof}
This proof is a simple induction on $|\alpha|$. The base case is easy as $\tr_\varnothing=r_\varnothing=1$ as required. Now, fix $\alpha$ for some $|\alpha|>1$, and assume we have proven the result for $|\beta| < |\alpha|$. Notice, that by definition, we have $\E_P\left[\tX^\alpha\right] =  a^{|\alpha|}\E_P\left[X^\alpha\right]$ and $\E_Q\left[\tX^{\beta}\right] =  a^{|\beta|} \E_Q\left[X^{\beta}\right]$. Therefore,
\[
\tr_\alpha= \E_P\left[\tX^\alpha\right]
-\sum_{\varnothing \subseteq \beta \subsetneq \alpha} \tr_{\alpha\backslash\beta} \, \E_Q\left[\tX^{\beta}\right] = a^{|\alpha|}\E_P\left[X^\alpha\right]
-\sum_{\varnothing \subseteq \beta \subsetneq \alpha} \tr_{\alpha\backslash\beta} \, a^{|\beta|} \E_Q\left[X^{\beta}\right]. \]
By the inductive hypothesis, $\tr_{\alpha\backslash\beta} = a^{|\alpha \backslash \beta|} \, r_{\alpha\backslash\beta}$ and so by the equation above we are done, as $a^{|\beta|} a^{|\alpha \backslash \beta|} = a^{|\alpha|}$.
\end{proof}

\begin{remark}\label{rem:non_recursive} 
We show that the non-recursive formula for the $r_\alpha$ given in~\eqref{eq:r_non_rec} is equivalent to the recursive one in~\eqref{eq:r_def_graphs}. For convenience we restate~\eqref{eq:r_non_rec} below:
\begin{equation}\label{eq:r_non_rec_restate} r_\alpha(P, Q) = \sum_{\varnothing \subseteq \delta \subseteq \alpha} \E_{P}[X^\delta] \sum_{\tau \in \mathcal{P}(\alpha\backslash \delta)} (-1)^{|\tau|} \;|\tau|! \prod_{\gamma \in \tau}\E_Q[X^{\gamma}].
\end{equation}
This follows by induction. Small cases may be checked. For the inductive step, assume the formula~\eqref{eq:r_non_rec_restate} holds for all sets of size at most $k$ and let $\alpha$ be any set of size $k+1$. Then by~\eqref{eq:r_def_graphs}, and noting the induction assumption holds for each~$\beta$ we have the following, writing $a_\delta$ for $\E_P[X^\delta]$ and $b_\gamma$ for $\E_Q[X^\gamma]$,

\[r_\alpha(P,Q) = a_{\alpha} - \sum_{\varnothing \subseteq \beta \subsetneq \alpha } r_\beta b_{\alpha \backslash \beta} = a_\alpha - \sum_{\varnothing \subseteq \beta \subsetneq \alpha }  b_{\alpha \backslash \beta} \sum_{\varnothing\subseteq \delta \subseteq \beta} a_{\delta} \sum_{\tau \in \mathcal{P}(\beta \backslash \delta) } (-1)^{|\tau|} \; |\tau|! \prod_{\gamma \in \tau} b_\gamma.\]
We may now swap the order of summation to yield

\[r_\alpha(P,Q) = a_\alpha -\sum_{\varnothing\subseteq \delta \subsetneq \alpha} a_{\delta} \sum_{\varnothing \subseteq \beta \subseteq \alpha\backslash \delta }  b_{\alpha \backslash \beta}  \sum_{\tau \in \mathcal{P}(\beta \backslash \delta) } (-1)^{|\tau|} \; |\tau|! \prod_{\gamma \in \tau} b_\gamma.\]
For $\tau$ a partition of $\beta \backslash \delta$, let $\tau'$ be the partition constructed from $\tau$ by adding the part $\alpha \backslash \beta$ and note that $\tau'$ is a partition of $\alpha \backslash \delta$. Note each $\tau'$ appears $|\tau'|=|\tau|+1$ times in the sum above, and hence 

\[r_\alpha(P,Q) = a_\alpha -\sum_{\varnothing\subseteq \delta \subsetneq \alpha} a_{\delta}   \sum_{\tau' \in \mathcal{P}(\alpha \backslash \delta) } (-1)^{|\tau'|-1} \; |\tau'|! \prod_{\gamma \in \tau'} b_\gamma\]
which matches the non-recursive expression for $r$ as claimed.
\end{remark}

\needspace{3\baselineskip}
\section{Proof of Theorems~\ref{thm:Gaussian} and~\ref{thm:binary}}\label{sec:proof} 

In this section we give the full proofs of the main results, Theorems~\ref{thm:Gaussian} and~\ref{thm:binary}.

\begin{proof}[Proof of Theorem~\ref{thm:Gaussian}]

\emph{Hard regime.} We start by proving the computational lower bound.
By definition of the Additive Gaussian model, we can write the observed edge weights $Y=\{Y_{ij}\}_{i\leq j}$ as $Y=X+Z$, where $Z$ consists of i.i.d.\ $\mathcal{N}(0,1)$ entries, and
\begin{equation}
\label{eq:X_def}
X_{ij}=
\begin{cases}
\lambda/x_\ell & \qquad  \sigma_i=\sigma_j=\ell\;\;  \text{for some }\ell\in [M],\\
0 & \qquad  \text{otherwise}.
\end{cases}
\end{equation}
Recall the sequence $r_\alpha$, defined recursively via
\[
r_\alpha = \EE_\mathbb{P}\left[X^\alpha\right] - \sum_{0 \leq \beta\lneq \alpha}r_\beta \, \binom{\alpha}{\beta} \, \EE_\mathbb{Q}\left[X^{\alpha-\beta}\right].
\]
By Lemma~\ref{lem:adv-sep} and Proposition~\ref{prop:adv-gauss}, we have that if
\begin{equation}\label{eq:r.sum.bound}
\sum_{\alpha \,:\, |\alpha|\leq D}\frac{r_\alpha^2}{\alpha!}=1+o(1),
\end{equation}
then no degree-$D$ test can weakly separate $\mathbb{P}$ and $\mathbb{Q}$. Thus, to prove the computational lower bound, it suffices to show that~\eqref{eq:r.sum.bound} holds for $D^5\lambda^2M^2(k^2/n\vee 1)=o(1)$.

We will demonstrate~\eqref{eq:r.sum.bound} by proving the following three facts. (We consider the sets $\alpha$ as graphs and write $V(\alpha)$ for the vertex set and $C(\alpha)$ for the set of connected components, see Section~\ref{subsec:alpha_as_graph} for details.)

\begin{enumerate}[(i)]
\item\label{fact:factorization} For all $\alpha$, the term $r_\alpha$ factorizes over the connected components of $\alpha$. That is,
\[
r_\alpha = \prod_{\beta\in \mathcal{C}(\alpha)} r_\beta.
\]
\item\label{fact:tree} If at least one connected component of $\alpha$ is a tree, then $r_\alpha=0$.
\item\label{fact:r.bound} For all $\alpha$, where $|\alpha| = |E(\alpha)|$ counts the edges in the graph $\alpha$,
\[
\left|r_\alpha\right| \leq \left(|\alpha|+1\right)^{|\alpha|}\left(\frac{\widehat{M}\lambda}{C}\right)^{|\alpha|}\left(\frac{k}{n}\right)^{|V(\alpha)|}.
\]
\end{enumerate}
Fact~(\ref{fact:factorization}) follows directly from Lemma~\ref{lem:multiply}. To see Fact~(\ref{fact:tree}), we note that by Lemma~\ref{lem:forest} it suffices to show that for any $\tau$ a tree, we have $\E_\mathbb{P}\left[X^\tau\right]=\E_\mathbb{Q}\left[X^\tau\right]$. But recall that for a tree, the number of edges is one less than the number of vertices, i.e.\ $|\tau|=|V(\tau)|-1$ and $\tau$ consists of one connected component so that $|C(\tau)|=1$. Thus, by Lemma~\ref{lem:integralgenx} we are done. Fact~(\ref{fact:r.bound}) follows from Lemma~\ref{lem:r.bound}. We will state and prove Lemmas~\ref{lem:integralgenx} and~\ref{lem:r.bound} at the end of this section.

Next, we argue that the three facts combined yield~\eqref{eq:r.sum.bound}. From fact (\ref{fact:r.bound}), we have for each $\alpha$,
\begin{align*}
r_\alpha^2 \leq & \left(|\alpha|+1\right)^{2|\alpha|}\left(\frac{\widehat{M}\lambda}{C}\right)^{2|\alpha|}\left(\frac{k}{n}\right)^{2|V(\alpha)|}\\
= & \left(|\alpha|+1\right)^{2|\alpha|} \left(\frac{\widehat{M}^2\lambda^2 k^2}{C^2n}\right)^{|E(\alpha)|}\left(\frac{k^2}{n}\right)^{|V(\alpha)| - |E(\alpha)|} n^{-|V(\alpha)|}.
\end{align*}
From (\ref{fact:tree}), we know that $r_\alpha$ is nonzero only when all connected components of $\alpha$ contain at least one cycle. Denote
\[
\mathcal{G}_{d,v}=\left\{\alpha: \left|E(\alpha)\right|=d; \, \left|V(\alpha)\right|=v; \text{ for all }\beta\in \mathcal{C}(\alpha), \, \beta \text{ is not a tree}\right\}.
\]
Note that for all $d,v$ such that $v>d$, we have that $\mathcal{G}_{d,v}=\varnothing$ because if all connected components of $\alpha$ contains at least one cycle, we must have $|\alpha|\geq |V(\alpha)|$. Thus for $k^2\geq n$, we have shown that for all $d,v$ with $\alpha\in \mathcal{G}_{d,v}$,
\[
r_\alpha^2 \leq \left(d+1\right)^{2d} \left(\frac{\widehat{M}^2\lambda^2 k^2}{C^2n}\right)^{d} n^{-v}.
\]
On the other hand, for $k^2<n$, we have
\[
r_\alpha^2 \leq \left(d+1\right)^{2d}\left(\frac{\widehat{M}\lambda}{C}\right)^{2d}\left(\frac{k}{n}\right)^{2v}\leq \left(d+1\right)^{2d} \left(\frac{\widehat{M}\lambda}{C}\right)^{2d}n^{-v}.
\]
Combined with the bound on $r_\alpha^2$ for $k^2\geq n$, we have shown that
\[
r_\alpha^2 \leq \left(d+1\right)^{2d} \left(\left(\frac{\widehat{M}\lambda}{C}\right)^2\left(\frac{k^2}{n}\vee 1\right)\right)^{d} n^{-v}.
\]
Next, we bound the size of $\mathcal{G}_{d,v}$ by counting the number of graphs with exactly $d$ edges and $v$ vertices:
\begin{equation}
\label{eq:comb.bound}
\left|\mathcal{G}_{d,v}\right| \leq \binom{n}{v}\binom{v}{2}^d \leq n^v v^{2d}.
\end{equation}
where the factor $\binom{n}{v}$ enumerates the possibilities for the vertex set in $\alpha$; the $\binom{v}{2}^d$ factor counts the allocation of the $d$ edges, allowing for edge multiplicity. Combining~\eqref{eq:w.bound} (see Lemma~\ref{lem:r.bound} below) and~\eqref{eq:comb.bound} yields
\begin{align*}
\sum_{\alpha \,:\, |\alpha|\leq D}\frac{r_\alpha^2}{\alpha!}
&\leq  r_0^2+\sum_{d=1}^D\sum_{v=1}^d\sum_{\alpha\in \mathcal{G}_{d,v}}\frac{r_\alpha^2}{\alpha!}\\
&\leq  1+\sum_{d=1}^D\sum_{v=1}^d \left|\mathcal{G}_{d,v}\right|\cdot (d+1)^{2d} \left(\left(\frac{\widehat{M}\lambda}{C}\right)^2\left(\frac{k^2}{n}\vee 1\right)\right)^{d} n^{-v}\\
&\leq  1+\sum_{d=1}^D\sum_{v=1}^d n^v v^{2d}(d+1)^{2d} \left(\left(\frac{\widehat{M}\lambda}{C}\right)^2\left(\frac{k^2}{n}\vee 1\right)\right)^{d} n^{-v}\\
&=  1+\sum_{d=1}^D \left((d+1)^2\left(\frac{\widehat{M}\lambda}{C}\right)^2\left(\frac{k^2}{n}\vee 1\right)\right)^{d} \, \sum_{v=1}^d  v^{2d}\\
&\leq   1+ D\sum_{d=1}^D \left((D+1)^2 D^{2}\left(\frac{\widehat{M}\lambda}{C}\right)^2\left(\frac{k^2}{n}\vee 1\right)\right)^{d} \\
&\leq  1+ D\sum_{d=1}^D \left(\left(2D^2\frac{\widehat{M}\lambda}{C}\right)^2\left(\frac{k^2}{n}\vee 1\right)\right)^{d} \\
&=  1+(1+o(1)) \, 4D^5\left(\frac{\widehat{M}\lambda}{C}\right)^2\left(\frac{k^2}{n}\vee 1\right)=1+o(1),
\end{align*}
where the last two equalities follow by the condition $D^5(\widehat{M}\lambda)^2(k^2/n\vee 1)=o(1)$, under which the summation over $d$ is a geometrically decreasing sequence, dominated by the first term.

\bigskip

\noindent\emph{Easy regime.} Next, we show that in the ``easy'' regime $\lambda^2\widetilde{M}^2(k^2/n\vee 1)=\omega(1)$ and $\widetilde{M}^2k=\omega(\widehat{M}^2)$, there is a low-degree test that strongly separates $\mathbb{P}$ and $\mathbb{Q}$. When $\lambda^2\widehat{M}^2k^2/n=\omega(1)$, consider the algorithm that uses $\widehat{T}=\sum_{i}Y_{ii}$, the sum of the diagonal elements, as the test statistic. We can compute the first and second moments of $\widehat{T}$ under the two models using \eqref{eq:X_def} to note that under $\mathbb{P}$, we have $Y_{ii} = \frac{\lambda}{x_\ell} + \mathcal{N}(0,1)$ if $ \sigma_i=\sigma_j=\ell$ for some $\ell\in [M]$ and $Y_{ii} = \mathcal{N}(0,1)$ otherwise, where each community label $\ell$ is selected with probability $\frac{x_{\ell} k}{n}$ and no label is selected with probability $1-\frac{k}{n}$. Under  $\mathbb{Q}$, we replace $x$ and $M$ with $x'$ and $M'$.
\begin{align*}
\mathbb{E}_\mathbb{P} \left[\widehat{T}\right] &=  n \, \mathbb{E}_\mathbb{P} \left[Y_{11}\right] = n\left[\sum_{\ell \in [M]}\frac{\lambda}{x_\ell}\cdot\mathbb{P}\{\sigma_1\!=\!\ell\}+0\cdot \mathbb{P}\{\sigma_1\!=\!\star\}\right]  = n\!\!\sum_{\ell \in [M]} \frac{k\lambda}{n}=Mk\lambda,\\
\mathbb{E}_\mathbb{Q} \left[\widehat{T}\right] &=  n \, \mathbb{E}_\mathbb{Q} \left[Y_{11}\right] = M'k\lambda,\\
\Var_\PP\left[\widehat{T}\right] &\leq  n \, \EE_\PP\left[Y_{11}^2\right] = n\left[\sum_{\ell \in [M]}\left(\frac{\lambda^2}{x_\ell^2}+1\right) \cdot  \mathbb{P}\{\sigma_1\!=\!\ell\} + 1 \cdot \mathbb{P}\{\sigma_1\!=\!\star\}\right] = n+\!\!\sum_{\ell\in [M]}\frac{k\lambda^2}{x_\ell},\\
\Var_\QQ\left[\widehat{T}\right] &\leq  n \, \EE_\QQ\left[Y_{11}^2\right] =  n+\!\!\sum_{\ell\in [M']}\frac{k\lambda^2}{x'_\ell}.
\end{align*}
Note also that $M \min_\ell x_\ell \ge C$ implies $\max_\ell 1/x_\ell < M/C$; thus, $\sum_{\ell\leq M}\frac{1}{x_\ell}\leq M^2/C$. Hence, when~$\widetilde{M}^2k/\widehat{M}^2  = \omega(1)$ and $\widetilde{M}^2\lambda^2k^2/n=\omega(1)$,
\[
\sqrt{\max\left\{\Var_\QQ\left[\widehat{T}\right], \Var_\PP\left[\widehat{T}\right]\right\}} = o\left(\left|\EE_\PP\left[\widehat{T}\right] - \EE_\QQ \left[\widehat{T}\right]\right|\right).
\]
Thus, thresholding $\widehat{T}$ strongly separates $\PP$ and $\QQ$.
\end{proof}

\begin{proof}[Proof of Theorem~\ref{thm:binary}] \emph{Hard regime.} The proof proceeds by comparison to a corresponding Gaussian model, so that we can reuse the calculations in the proof of Theorem~\ref{thm:Gaussian}. Our starting point is Proposition~\ref{prop:adv-binary}. Define $X = X^{(q,s)}$ appropriately for our binary testing problem, i.e., $X_{ij}^{(q,s)} = q + s/x_\ell$ if $\sigma_i = \sigma_j = \ell$, and $X_{ij}^{(q,s)} = q$ otherwise. Let $\tau_0 = q$, and recall that we have a valid constant $\tau_1 < 1$ by assumption. Consider the additive Gaussian testing problem (as in Theorem~\ref{thm:Gaussian}) with the same parameters $n,k,M,x$ as our binary model, and with $\lambda := s/\sqrt{q(1-\tau_1)}$. Let $X^{(\lambda)}$ denote the corresponding $X$ as per Proposition~\ref{prop:adv-gauss}, i.e., $X_{ij}^{(\lambda)} = \lambda/x_\ell$ if $\sigma_i = \sigma_j = \ell$, and $X_{ij}^{(\lambda)} = 0$ otherwise. Note $X_{ij}^{(q,s)} = (s/\lambda)X_{ij}^{(\lambda)} + q$ and so by Lemmas~\ref{lem:constantshift} and~\ref{lem:scalar} we have, $r_\alpha(X^{(q,s)}) = (s/\lambda)^{|\alpha|} r_\alpha(X^{(\lambda)})$. By Proposition~\ref{prop:adv-binary},
\begin{align*}
\Adv_{\le D}(\PP,\QQ) &\le \sqrt{\sum_{\alpha \in \{0,1\}^N, \, |\alpha| \le D} \frac{r_\alpha\left(X^{(q,s)}\right)^2}{\left(q(1-\tau_1)\right)^{|\alpha|}}}
= \sqrt{\sum_{\alpha \in \{0,1\}^N, \, |\alpha| \le D} r_\alpha\left(X^{(\lambda)}\right)^2}\\
&\le \sqrt{\sum_{\alpha \in \NN^N, \, |\alpha| \le D} \frac{r_\alpha\left(X^{(\lambda)}\right)^2}{\alpha!}}.
\end{align*}
In other words, we have related the conclusion of Proposition~\ref{prop:adv-binary} to the conclusion of Proposition~\ref{prop:adv-gauss} but with $s/\sqrt{q(1-\tau_1)}$ in place of $\lambda$. The result now follows by the proof of Theorem~\ref{thm:Gaussian}.

\bigskip

\noindent\emph{Easy regime.}
We now consider a signed triangle count $\widehat{R}$ as our test statistic. Let 
\begin{equation}\label{eq:signedtridefn} \widehat{R}= \sum_{i < j < k} \sY_{ij}\sY_{ik}\sY_{jk} \qquad \:\:\:\:\mbox{where } \sY_{ij} = Y_{ij} - q.\end{equation}
Expectation and variance calculations for $\widehat{R}$ are computed in Lemma~\ref{lem:signtricalcs} of Section~\ref{subsec:trianglecounts}. Denote by $\widehat{M}$ the maximum of~$M$ and $M'$. Then,
\[
\left|\mathbb{E}_\mathbb{P}\left[\widehat{R}\right]-\mathbb{E}_\mathbb{Q}\left[\widehat{R}\right]\right| \:  = \: \: \tfrac{1}{3}\left|M-M'\right| \, s^3 \, k^3 \left(1+O(n^{-1})\right),\]
and
\begin{align}\notag &\!\!\!\max\left\{\Var_\PP  \left[\widehat{R}\right],  \Var_\QQ\left[\widehat{R}\right]\right\} \: \\ 
\label{eq:varTri} & \leq  \frac{1}{C}\widehat{M}^2 k^5 s^6  + \widehat{M} k^4s^4 q +   \frac{1}{C}\widehat{M}^2k^4s^5  + \tfrac{1}{3}n^3q^3 + n k^2 sq^2  + k^3q^2s + k^3qs^2 + \tfrac{1}{3}\widehat{M}k^3s^3
\end{align}
where $C$ is the constant from the assumption that $M \min_\ell x_\ell, M' \min_\ell x'_\ell > C$. Writing $\widetilde{M}=|M-M'|$, notice that to prove strong separation it suffices to show that each term in \eqref{eq:varTri} is $o(\widetilde{M}^2 s^6 k^6)$. For the fourth term to be $o(\widetilde{M}^2 s^6 k^6)$ is equivalent to $\widetilde{M}^{2/3}s^2k^2/(nq)=\omega(1)$, one of our assumptions. Similarly, for the first term to be $o(\widetilde{M}^2 s^6 k^6)$ is equivalent to $\widetilde{M}^{2}k\widehat{M}^{2}=\omega(1)$ another of our assumptions.
For the last term to be $o(\widetilde{M}^2 s^6 k^6)$ is equivalent to $\widetilde{M}^{2/3}\widehat{M}^{-1/3}sk=\omega(1)$, which is implied by our assumption $\widehat{M}^{-1/3}sk=\omega(1)$. All other terms follow also due to these assumptions.
\end{proof}

\begin{lemma}\label{lem:integralgenx}
For each $\alpha \in \NN^N$, and for each $x=(x_1, \ldots, x_c)$ with $\sum_\ell x_\ell=1$,
\begin{equation*}
\mathbb{E}_\mathbb{P} \left[X^\alpha\right] = \lambda^{|\alpha|}\left(\frac{k}{n}\right)^{|V(\alpha)|}\prod_{\beta \in C(\alpha)} \, \sum_{\ell=1}^c x_\ell^{|V(\beta)|-|\beta|}.
\end{equation*}
\end{lemma}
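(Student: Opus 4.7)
The plan is to exploit the explicit form of $X$ given in \eqref{eq:X_def} together with the independence of the community labels $\{\sigma_i\}_{i\in[n]}$. Writing $X^\alpha=\prod_{i\leq j}X_{ij}^{\alpha_{ij}}$, each factor with $\alpha_{ij}\geq 1$ equals $\lambda/x_\ell$ if $\sigma_i=\sigma_j=\ell\in[M]$ and is zero otherwise. Hence $X^\alpha$ is nonzero only on the event that, for every edge $ij$ present in $\alpha$, both endpoints are assigned a common non-$\star$ community label. Equivalently, every connected component $\beta\in C(\alpha)$ must have all of its vertices $V(\beta)$ land in a single community $\ell_\beta\in[M]$.

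First I would condition on an assignment $(\ell_\beta)_{\beta\in C(\alpha)}$ of community labels to components. On this event, the contribution of the edges inside a component $\beta$ is $(\lambda/x_{\ell_\beta})^{|\beta|}$, where $|\beta|=|E(\beta)|$ counts edges with multiplicity. Since the $\sigma_i$ are independent across vertices, and components have disjoint vertex sets, the probability that \emph{every} vertex of $V(\beta)$ lies in community $\ell_\beta$ is $(x_{\ell_\beta}k/n)^{|V(\beta)|}$, and these events are independent across $\beta\in C(\alpha)$.

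Next I would sum over all label assignments and interchange sum and product. This gives
\begin{align*}
\mathbb{E}_\mathbb{P}[X^\alpha]
&=\sum_{(\ell_\beta)}\prod_{\beta\in C(\alpha)}\!\left(\frac{\lambda}{x_{\ell_\beta}}\right)^{|\beta|}\!\left(\frac{x_{\ell_\beta}k}{n}\right)^{|V(\beta)|}
=\prod_{\beta\in C(\alpha)}\sum_{\ell=1}^c\lambda^{|\beta|}\left(\frac{k}{n}\right)^{|V(\beta)|}x_\ell^{|V(\beta)|-|\beta|}.
\end{align*}
Finally, pulling out the $\lambda$ and $k/n$ factors and using $\sum_{\beta\in C(\alpha)}|\beta|=|\alpha|$ together with $\sum_{\beta\in C(\alpha)}|V(\beta)|=|V(\alpha)|$ (the components partition the edges and vertices of $\alpha$) yields the stated identity.

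There is no real obstacle: the argument is a clean case analysis driven by (i) the fact that $X$ takes the value $0$ outside same-community pairs, so only monochromatic labellings of each component contribute, and (ii) the independence of the vertex-level labels, which makes the joint probability factor over components. The only minor points worth checking carefully are that self-loops $\alpha_{ii}>0$ cause no issue (the factor is still $(\lambda/x_\ell)^{\alpha_{ii}}$ on the event $\sigma_i=\ell$), and that the $\star$ label does not enter the sum over $\ell$ precisely because $X_{ij}=0$ on $\{\sigma_i=\star\}$, so the sum runs over $\ell\in[M]$ only.
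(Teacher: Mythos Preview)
Your proposal is correct and follows essentially the same approach as the paper: both arguments use that $X^\alpha$ is nonzero only when every connected component is monochromatic in a non-$\star$ label, then exploit independence of the vertex labels to factor the expectation over components. The paper's version first computes $\mathbb{E}_\mathbb{P}[X^\beta]$ for a single connected $\beta$ and then invokes independence across components, whereas you sum over label assignments $(\ell_\beta)_\beta$ and interchange sum and product---these are the same computation organised in opposite orders.
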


\begin{proof} First consider $\beta$ a connected graph. Note that for $(i,j)\in \beta$, if it is not the case that $i,j \in S_\ell$ for some $\ell$ then $X^{(i,j)}\sim \mathcal{N}(0,1)$ and so $\mathbb{E}_\mathbb{P}\left[X^{(i,j)}\right]=0$ (here, we have used that our $S_\ell$'s do not overlap). Hence, for $\beta$ connected,
\[
\mathbb{E}_{\mathbb{P}} \left[X^\beta\right] =   \sum_{\ell=1}^c \mathbb{P} \left( V(\beta) \in S_\ell \right) \left( \frac{\lambda}{x_\ell} \right)^{|\beta|}   = \sum_{\ell=1}^c   \left(\frac{x_\ell \, k}{n}\right)^{|V(\beta)|}  \left( \frac{\lambda}{x_\ell} \right)^{|\beta|}.
\]
Notice, it is now enough to show that the $X^\beta$'s are independent for $\beta$'s connected components of $\alpha$, as this would imply that $\mathbb{E}_{\mathbb{P}} \left[X^\alpha\right] = \prod_{\beta \in C(\alpha)} \mathbb{E}_{\mathbb{P}} \left[X^\beta\right]$ and we have the result.

This independence follows because $X^{(i,j)}$ depends only on the events $[i \in S_\ell]$, $[j \in S_{\ell'}]$ for each $\ell, \ell'$; thus, $X^\beta$ and $X^{\beta'}$ are independent as long as their vertex sets $V(\beta)$ and $V(\beta')$ do not overlap.  As the vertex sets of connected components are mutually non-overlapping, we have finished the proof.
\end{proof}

\begin{lemma}\label{lem:r.bound}
Suppose $Mx_{(1)}\geq C$ and $M'x'_{(1)} \geq C$ where $x_{(1)} := \min_\ell x_\ell$ for some constant $C>0$. Then there exists 
$n_0 \in \mathbb{N}$ such that for $n>n_0$, for all $\alpha$, we have that $r_\alpha$ satisfies
\begin{equation}\label{eq:w.bound}
|r_\alpha|\leq \left(|\alpha+1|\right)^{|\alpha|}\left(\frac{\widehat{M}\lambda}{C}\right)^{|\alpha|}\left(\frac{k}{n}\right)^{|V(\alpha)|}.
\end{equation}
\end{lemma}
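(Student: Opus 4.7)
The plan is to prove the bound by induction on $|\alpha|$, the total number of edges, using the edge-labelled form of the recursion from Section~\ref{subsec:alpha_as_graph}:
\[
r_\alpha = \E_{\mathbb{P}}[X^\alpha] \;-\; \sum_{\varnothing \subseteq \beta \subsetneq \alpha} r_\beta \, \E_{\mathbb{Q}}[X^{\alpha \setminus \beta}].
\]
The base case $|\alpha|=0$ is immediate, since $r_\varnothing = 1$ matches the bound $(0+1)^0 = 1$. The first ingredient for the inductive step is a uniform moment bound: for any graph $\tau$,
\[
|\E_{\mathbb{P}}[X^\tau]|,\; |\E_{\mathbb{Q}}[X^\tau]| \;\le\; (\widehat{M}\lambda/C)^{|\tau|}(k/n)^{|V(\tau)|}.
\]
By Lemma~\ref{lem:integralgenx}, $|\E_{\mathbb{P}}[X^\tau]| = \lambda^{|\tau|}(k/n)^{|V(\tau)|}\prod_{\gamma \in C(\tau)}\sum_\ell x_\ell^{|V(\gamma)|-|\gamma|}$. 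For each connected component $\gamma$, if $\gamma$ is a tree the factor equals $\sum_\ell x_\ell = 1$, and otherwise $|V(\gamma)| \le |\gamma|$ so the factor is at most $M \, x_{(1)}^{|V(\gamma)|-|\gamma|} \le (\widehat{M}/C)^{|\gamma|-|V(\gamma)|+1}$, using $x_{(1)} \ge C/M$ together with $\widehat{M} \ge M$ and $C \le 1$. Taking the product over components yields an exponent of $\widehat{M}/C$ equal to $|\tau| - |V(\tau)| + |C(\tau)| \le |\tau|$, since each component has at least one vertex; the same estimate applies under $\mathbb{Q}$.

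Applying the triangle inequality to the recursion, combining this moment bound on $\E_{\mathbb{Q}}[X^{\alpha\setminus\beta}]$ with the inductive hypothesis on each $r_\beta$ for $\beta\subsetneq\alpha$, yields
\[
|r_\alpha| \;\le\; (\widehat{M}\lambda/C)^{|\alpha|}\left[(k/n)^{|V(\alpha)|} + \sum_{\varnothing\subseteq\beta\subsetneq\alpha}(|\beta|+1)^{|\beta|}(k/n)^{|V(\beta)|+|V(\alpha\setminus\beta)|}\right].
\]
The key combinatorial observation is $|V(\beta)| + |V(\alpha\setminus\beta)| \ge |V(\alpha)|$: every vertex incident to an edge of $\alpha$ is incident to an edge of $\beta$ or of $\alpha\setminus\beta$, so it lies in $V(\beta) \cup V(\alpha\setminus\beta)$. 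Together with $k/n \le 1$ (valid for $n > n_0$), this gives $(k/n)^{|V(\beta)|+|V(\alpha\setminus\beta)|} \le (k/n)^{|V(\alpha)|}$. Since there are $\binom{|\alpha|}{d}$ edge-labelled subgraphs of $\alpha$ with exactly $d$ edges, the inductive step reduces to the purely combinatorial inequality
\[
T_n := 1 + \sum_{d=0}^{n-1}\binom{n}{d}(d+1)^d \;\le\; (n+1)^n, \qquad n = |\alpha|.
\]

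The main obstacle is this combinatorial bound, which I would prove by a second induction on $n$. Using Pascal's identity, $T_{n+1} = T_n + (n+1)^n + \sum_{d=0}^{n-1}\binom{n}{d}(d+2)^{d+1}$. Applying the estimate $(d+2)^{d+1} \le (n+1)(n+1)^d$ (valid for $d \le n-1$), the binomial identity $\sum_{d=0}^{n-1}\binom{n}{d}(n+1)^d = (n+2)^n - (n+1)^n$, and the inner inductive hypothesis $T_n \le (n+1)^n$ yields $T_{n+1} \le (n+1)(n+2)^n - (n-1)(n+1)^n \le (n+2)^{n+1}$, which closes the induction and completes the proof of Lemma~\ref{lem:r.bound}.
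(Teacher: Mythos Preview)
Your proof is correct, and it follows a more streamlined route than the paper's. The paper's induction splits into three cases: for disconnected $\alpha$ it invokes the factorization lemma (Lemma~\ref{lem:multiply}); for trees it uses the vanishing fact (Lemma~\ref{lem:forest}); and for connected non-tree $\alpha$ it performs a component-by-component analysis of $\E_\QQ[X^{\alpha\setminus\beta}]$, showing for each $\gamma\in\mathcal{C}(\alpha\setminus\beta)$ that $(\widehat{M}/C)^{-|\gamma|}\sum_\ell(x'_\ell)^{|V(\gamma)|-|\gamma|}\le 1$. You bypass all of this by proving a single uniform moment bound $|\E[X^\tau]|\le (\widehat{M}\lambda/C)^{|\tau|}(k/n)^{|V(\tau)|}$ valid under both $\PP$ and $\QQ$, and then using the elementary inclusion $V(\alpha)\subseteq V(\beta)\cup V(\alpha\setminus\beta)$ together with $k/n\le 1$ to control the vertex exponent. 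This is genuinely simpler: it does not require Lemmas~\ref{lem:multiply} or~\ref{lem:forest} as inputs, and it treats all $\alpha$ uniformly. The cost is that you need to verify the combinatorial inequality $T_n\le(n+1)^n$ carefully (the paper asserts the same inequality without proof), but your inductive argument for it is sound. Both proofs ultimately land on the same combinatorial bound, so neither yields a sharper constant; your approach simply gets there with fewer case distinctions.
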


\begin{proof}
We will argue by induction on $|\alpha|$. A graph $\alpha$ with $|\alpha|=1$ is either a tree with two vertices and one edge, or a self-loop with one vertex and one edge. If $\alpha$ is a tree, then $r_\alpha=0$ and~\eqref{eq:w.bound} trivially holds. Recall $\widehat{M}=\max\{M, M'\}$. If $\alpha$ is a self-loop, we have by Lemma~\ref{lem:integralgenx},
\[ r_\alpha = \mathbb{E}_\mathbb{P}\left[X^\alpha\right] - \mathbb{E}_\mathbb{Q}\left[X^\alpha\right] =  M\lambda \left(\frac{k}{n}\right) - M'\lambda \left(\frac{k}{n}\right) \leq  \left(|\alpha|+1\right)^{|\alpha|}\left(\frac{\widehat{M}\lambda}{C}\right)^{|\alpha|}\left(\frac{k}{n}\right)^{|V(\alpha)|}, \]
where the last inequality is because $C\leq M x_{(1)}\leq 1$. We have shown~\eqref{eq:w.bound} for $|\alpha|=1$. Suppose~\eqref{eq:w.bound} holds for all $\alpha$ with $|\alpha|\leq d-1$; next, we show it also holds for $|\alpha|=d$. 

If $\alpha$ is not connected, then each connected component $\beta\in \mathcal{C}(\alpha)$ has $|\beta|<d$. Thus, from the factorization lemma and the induction hypothesis, we have
\[
\left|r_\alpha\right|= \prod_{\beta\in \mathcal{C}(\alpha)}\left|r_\beta\right| \leq \prod_{\beta\in \mathcal{C}(\alpha)}\left(|\beta|+1\right)^{|\beta|}\left(\frac{\lambda \widehat{M}}{C}\right)^{|\beta|}\left(\frac{k}{n}\right)^{|V(\beta)|}\leq \left(|\alpha|+1\right)^{|\alpha|}\left(\frac{\lambda \widehat{M}}{C}\right)^{|\alpha|}\left(\frac{k}{n}\right)^{|V(\alpha)|}.
\]
Thus~\eqref{eq:w.bound} holds. Next, we show~\eqref{eq:w.bound} for $\alpha$ connected. If $\alpha$ is a tree, then by Fact~(\ref{fact:tree}) we have $r_\alpha=0$ and~\eqref{eq:w.bound} holds. Therefore, it suffices to consider $\alpha$ that is not a tree. Recall that
\begin{equation}\label{eq:w.expand}
r_\alpha = \mathbb{E}_\mathbb{P}\left[X^\alpha\right] - \mathbb{E}_\mathbb{Q}\left[X^\alpha\right]
- \sum_{0<\beta\lneq\alpha}r_\beta\binom{\alpha}{\beta} \mathbb{E}_\mathbb{Q}\left[X^{\alpha\backslash\beta}\right].
\end{equation}
For the first term in~\eqref{eq:w.expand}, we can apply Lemma~\ref{lem:integralgenx} for connected $\alpha$:
\begin{align*}
\mathbb{E}_\mathbb{P}\left[X^\alpha\right] &=  \lambda^{|\alpha|}\left(\frac{k}{n}\right)^{|V(\alpha)|}\sum_{\ell \in [M]} x_\ell^{|V(\alpha)|-|\alpha|}\\
& \leq  \lambda^{|\alpha|}\left(\frac{k}{n}\right)^{|V(\alpha)|} Mx_{(1)}^{|V(\alpha)|-|\alpha|}\\
& = \lambda^{|\alpha|} \left(\frac{k}{n}\right)^{|V(\alpha)|} \left(Mx_{(1)}\right)^{|V(\alpha)| - |\alpha|} M^{1-|V(\alpha)|+|\alpha|}\\
& \leq  \left(\frac{\lambda M}{C}\right)^{|\alpha|}\left(\frac{k}{n}\right)^{|V(\alpha)|} M^{1-|V(\alpha)|}\\
& \leq \left(\frac{\lambda M}{C}\right)^{|\alpha|}\left(\frac{k}{n}\right)^{|V(\alpha)|} 
\end{align*}
for large enough $n$. The first inequality is because assuming $\alpha$ is not a tree, $|V(\alpha)|\leq |\alpha|$ and $x_\ell\leq 1$; the second inequality is because $1\geq Mx_{(1)}\geq C$, thus $(Mx_{(1)})^{|V(\alpha)|-|\alpha|}\leq C^{|V(\alpha)|-|\alpha|}\leq C^{-|\alpha|}$; the last inequality holds since $|V(\alpha)|\geq 1$.

Next, we bound the third term in~\eqref{eq:w.expand}. For each $\beta\lneq \alpha$ that is nonempty, $|\beta|<|\alpha|=d$. From the induction hypothesis, we have
\[
\left|r_\beta\right| \leq \left(|\beta|+1\right)^{|\beta|}\left(\frac{\widehat{M}\lambda}{C}\right)^{|\beta|}\left(\frac{k}{n}\right)^{|V(\beta)|}.
\]
Thus,
\begin{align*}
    & \left|r_\beta \mathbb{E}_{\mathbb{Q}}X^{\alpha\backslash\beta}\right|
    \leq \left(|\beta|+1\right)^{|\beta|}\left(\frac{\widehat{M}\lambda}{C}\right)^{|\beta|}\left(\frac{k}{n}\right)^{|V(\beta)|}\cdot \lambda^{|\alpha\backslash\beta|}\left(\frac{k}{n}\right)^{|V(\alpha\backslash\beta)|}\prod_{\gamma\in \mathcal{C}(\alpha\backslash\beta)}\,\sum_{\ell \in [M']}(x'_\ell)^{|V(\gamma)|-|\gamma|}\\
    & = \left(|\beta|+1\right)^{|\beta|}\left(\frac{\widehat{M}\lambda}{C}\right)^{|\alpha|}\left(\frac{k}{n}\right)^{|V(\alpha)|} \left(\frac{\widehat{M}}{C}\right)^{-|\alpha\backslash\beta|}\left(\frac{k}{n}\right)^{-|V(\beta)\cap V(\alpha\backslash\beta)|} \prod_{\gamma\in \mathcal{C}(\alpha\backslash\beta)}\,\sum_{\ell\in [M']}(x'_\ell)^{|V(\gamma)|-|\gamma|}\\
    &\leq 
     \left(|\beta|+1\right)^{|\beta|}\left(\frac{\widehat{M}\lambda}{C}\right)^{|\alpha|}\left(\frac{k}{n}\right)^{|V(\alpha)|} \prod_{\gamma\in \mathcal{C}(\alpha\backslash\beta)}\left(\frac{\widehat{M}}{C}\right)^{-|\gamma|}\sum_{\ell\in [M']}(x'_\ell)^{|V(\gamma)|-|\gamma|}.
\end{align*}
Next, we show that for all $\gamma\in \mathcal{C}(\alpha\backslash\beta)$, we have that $(\widehat{M}/C)^{-|\gamma|}\sum_{\ell\in [M']}(x'_\ell)^{|V(\gamma)|-|\gamma|}\leq 1$.
Note that $|V(\gamma)|\leq |\gamma|+1$. We discuss the cases $|V(\gamma)|= |\gamma|+1$ and $|V(\gamma)|\leq |\gamma|$ separately. If $|V(\gamma)|= |\gamma|+1$, then
\[
\left(\frac{\widehat{M}}{C}\right)^{-|\gamma|}\sum_{\ell\in [M']}(x'_\ell)^{|V(\gamma)|-|\gamma|}= \left(\frac{\widehat{M}}{C}\right)^{-|\gamma|}\leq 1.
\]
If $|V(\gamma)|\leq |\gamma|$, then we have
\begin{align*}
\left(\frac{\widehat{M}}{C}\right)^{-|\gamma|}\sum_{\ell \in [M']} (x'_\ell)^{|V(\gamma)|-|\gamma|} \leq  \left(\frac{M}{C}\right)^{-|\gamma|} \, M' \,  (x'_{(1)})^{|V(\gamma)|-|\gamma|} &\stackrel{(a)}{\leq}  \left(\widehat{M}x'_{(1)}\right)^{|V(\gamma)|-|\gamma|} \, \widehat{M}^{1-|V(\gamma)|} \, C^{|\gamma|}\\
&\stackrel{(b)}{\leq} C^{|V(\gamma)|} \, \widehat{M}^{1-|V(|\gamma|)|}\stackrel{(c)}{\leq} 1,
\end{align*}
where (a) is from $M'\leq M$; (b) is from $M'x'_{(1)}\geq C$ and $|V(\gamma)|\leq |\gamma|$; (c) is from $C\leq 1$, $\widehat{M}\geq 1$, and $|V(\gamma)|\geq 1$ for all $\gamma\in \mathcal{C}(\alpha\backslash \beta)$.
We have shown that
\[
\left|r_\beta \mathbb{E}_{\mathbb{Q}}X^{\alpha\backslash\beta}\right|
    \leq \left(|\beta|+1\right)^{|\beta|}\left(\frac{\widehat{M}\lambda}{C}\right)^{|\alpha|}\left(\frac{k}{n}\right)^{|V(\alpha)|}.
\]
Plug in the values of $\mathbb{E}_\mathbb{P}[X^\alpha]$ and $\mathbb{E}_\mathbb{Q}[X^\alpha]$ to~\eqref{eq:w.expand} to obtain
\begin{align*}
\left|r_\alpha\right| &\leq  \left(\frac{\widehat{M}\lambda}{C}\right)^{|\alpha|}\left(\frac{k}{n}\right)^{|V(\alpha)|} + \lambda^{|\alpha|}\left(\frac{k}{n}\right)^{|V(\alpha)|}+\sum_{0<\beta\lneq\alpha}\binom{\alpha}{\beta} \left(|\beta|+1\right)^{|\beta|}\left(\frac{\widehat{M}\lambda}{C}\right)^{|\alpha|}\left(\frac{k}{n}\right)^{|V(\alpha)|}\\
& \leq \left[1+1+\sum_{0<\beta\lneq\alpha}\left(|\beta|+1\right)^{|\beta|}\right]\left(\frac{\widehat{M}\lambda}{C}\right)^{|\alpha|}\left(\frac{k}{n}\right)^{|V(\alpha)|}\\
&\leq  \left(|\alpha|+1\right)^{|\alpha|}\left(\frac{\widehat{M}\lambda}{C}\right)^{|\alpha|}\left(\frac{k}{n}\right)^{|V(\alpha)|},
\end{align*}
where the last inequality is because
\begin{align*}
 2+\sum_{0<\beta\lneq\alpha}\binom{\alpha}{\beta} \left(|\beta|+1\right)^{|\beta|} &= 2+\sum_{0<\ell<|\alpha|}\binom{|\alpha|}{\ell}(\ell+1)^\ell \leq  \left(|\alpha|+1\right)^{|\alpha|}.
\end{align*}
We have shown that~\eqref{eq:w.bound} holds for all $\alpha$.
\end{proof}

\needspace{3\baselineskip}
\section{Additional proofs}
\label{sec:additional-proofs}

\subsection{Proof of Lemma~\ref{lem:adv-sep}}

First we prove the statement for weak separation. Assume, for the sake of contradiction, that some degree-$D$ test $g: \RR^N \to \RR$ weakly separates $\PP$ and $\QQ$. Without loss of generality, we can shift and scale $g$ so that $\EE_\QQ[g] = 0$ and $\EE_\PP[g] = 1$. Weak separation guarantees that for sufficiently large $n$, $\Var_\QQ[g] = \EE_\QQ[g^2] \le C$ for some positive constant $C > 0$. Defining $f = g + C$, we have
\[ \Adv_{\le D} \ge \frac{\EE_\PP[f]}{\sqrt{\EE_\QQ[f^2]}} = \frac{1+C}{\sqrt{\EE_\QQ[g^2]+C^2}} \ge \frac{1+C}{\sqrt{C+C^2}} = \sqrt{\frac{1+C}{C}}, \]
which is a constant strictly greater than $1$, contradicting $\Adv_{\le D} = 1+o(1)$. The proof for strong separation is identical, except now $C = o(1)$.

\subsection{Proof of Proposition~\ref{prop:adv-gauss}}

The proof is similar to the proof of Theorem~2.2 in~\cite{SW-estimation}, so we only explain the differences. Our distribution $\QQ$ plays the role of the single ``planted'' distribution in~\cite{SW-estimation}. The only difference is that the quantity $\EE[f(Y)x]$ from~\cite{SW-estimation} needs to be replaced by our $\EE_\PP[f(Y)]$, which means (in the notation of~\cite{SW-estimation}) the vector $c$ needs to be redefined as $c_\alpha = \EE_\PP[h_\alpha(Y)] = \EE_\PP[X^\alpha]/\sqrt{\alpha!}$.

\subsection{Proof of Proposition~\ref{prop:adv-binary}}

Follow the proof of Theorem~2.7 in~\cite{SW-estimation}, but redefine $c = (c_\alpha)_{\alpha \in \{0,1\}^N}$ by $c_\alpha = \EE_\PP[\widetilde{X}^\alpha]$ where $\widetilde{X}_i = (\mu + 1/\mu)X_i - 1/\mu$ and $\mu = \sqrt{\frac{1-\tau_1}{\tau_0}}$. This gives the bound
\[ \Adv_{\le D}(\PP,\QQ)
\leq \sqrt{\sum_{\alpha \in \{0,1\}^{N} , \, |\alpha|\leq D}\frac{r_\alpha(\widetilde X)^2}{(1+\tau_0-\tau_1)^{2|\alpha|}}} \]
where $r_\alpha(\widetilde X)$ is defined in~\eqref{eq:r-def}. Using Lemmas~\ref{lem:constantshift} and~\ref{lem:scalar}, we have $r_\alpha(\widetilde X) = (\mu+1/\mu)^{|\alpha|} r_\alpha(X)$, so the above simplifies to give the result.

\subsection{Proof of Lemma~\ref{lem:constantshift}}

\emph{Base case(s).}
Note that by definition $\tr_\varnothing = r_\varnothing = 1$. Let $|\alpha|=1$, i.e.\ $\alpha=\{ij\}$ for some $1 \leq i \leq j \leq n$. Then the base step follows directly from the definition
\[ 
\tr_\alpha= \E_P\left[\tX^{ij}\right]-\E_Q\left[\tX^{ij}\right] = \E_P\left[X^{ij}\right]+y_{ij}-\E_Q\left[X^{ij}\right]-y_{ij}=r_\alpha.
\] 

\noindent\emph{Inductive step.} Fix $\alpha$ with $|\alpha|>1$ and assume $\tr_\beta=r_\beta$ for all $\beta \subsetneq \alpha$. Directly from the definition of $r$ and the inductive hypothesis,
\begin{equation}
\begin{split}
\label{eq:tilder}
\tr_\alpha
&=\E_P\left[\tX^\alpha\right]-\E_Q\left[\tX^\alpha\right] - \sum_{\varnothing \subsetneq \beta \subsetneq \alpha} \tr_{\alpha\backslash\beta} \,\E_Q\left[\tX^\beta\right] \\
&= \E_P\left[\tX^\alpha\right]-\E_Q\left[\tX^\alpha\right] - \sum_{\varnothing \subsetneq \beta \subsetneq \alpha} r_{\alpha\backslash\beta} \,\E_Q\left[\tX^\beta\right].
\end{split}
\end{equation}
We consider the third term, call it $\Asterisk$. Writing $y^{\eta}$ to indicate $\prod_{ij \in \eta} y_{ij}$, first notice that
\begin{equation}
\label{eq:tildeX}
\E_Q\left[\tX^\beta\right]
= \E_Q\left[\prod_{ij \in \beta}(X_{ij}+y_{ij})\right]
= \E_Q\left[X^\beta\right] + \sum_{\varnothing \subsetneq \eta \subseteq \beta} y^\eta \, \E_Q\left[X^{\beta\backslash \eta}\right];
 \end{equation}
hence,
\[
\Asterisk=\sum_{\varnothing \subsetneq \beta \subsetneq \alpha} r_{\alpha\backslash\beta} \, \E_Q\left[X^{\beta}\right] + \sum_{\varnothing \subsetneq \beta \subsetneq \alpha} r_{\alpha\backslash\beta}  \sum_{\varnothing \subsetneq \eta \subseteq \beta} y^\eta \, \E_Q\left[X^{\beta\backslash \eta}\right]. 
 \]
If we let $\beta'=\beta\backslash \eta$, instead of summing over $\varnothing  \subsetneq \beta \subsetneq \alpha$ and then $\varnothing \subsetneq \eta \subseteq \beta$, we may sum over $\varnothing \subsetneq \eta \subsetneq \alpha$ then $\varnothing \subseteq \beta' \subsetneq \alpha\backslash \eta$. Thus, noting also that $\alpha \backslash \beta = (\alpha\backslash \eta)\backslash \beta'$,
\[
\Asterisk=\sum_{\varnothing \subsetneq \beta \subsetneq \alpha} r_{\alpha\backslash\beta} \, \E_Q\left[X^{\beta}\right] + \sum_{\varnothing \subsetneq \eta \subsetneq \alpha } y^\eta \sum_{\varnothing \subseteq \beta' \subsetneq \alpha\backslash\eta} r_{(\alpha\backslash \eta)\backslash\beta'} \,  \E_Q\left[X^{\beta'}\right].
 \]
But, by the definition of $r_{\alpha\backslash \eta}$,
\begin{align*}
\sum_{\varnothing \subseteq \beta' \subsetneq \alpha\backslash\eta} r_{(\alpha\backslash \eta)\backslash\beta'}   \, \E_Q\left[X^{\beta'}\right] &= r_{\alpha\backslash\eta} + \sum_{\varnothing \subsetneq \beta' \subsetneq \alpha\backslash\eta} r_{(\alpha\backslash \eta)\backslash\beta'}  \,  \E_Q\left[X^{\beta'}\right] \\
&= \E_P\left[X^{\alpha\backslash \eta}\right] - \E_Q\left[X^{\alpha\backslash \eta}\right],
\end{align*}
which gives the following expression for $\Asterisk$ where we no longer have the sum over $\beta'$:
\[
\Asterisk=\sum_{\varnothing \subsetneq \beta \subsetneq \alpha} r_{\alpha\backslash\beta} \, \E_Q\left[X^{\beta}\right] + \sum_{\varnothing \subsetneq \eta \subsetneq \alpha } y^\eta \, \left(\E_P\left[X^{\alpha\backslash \eta}\right] - \E_Q\left[X^{\alpha\backslash \eta}\right]\right).
 \]
Substituting this expression for $\Asterisk$ into our original expression for $\tr_\alpha$ in \eqref{eq:tilder}, we have
\[
\tr_\alpha= \E_P\left[\tX^\alpha\right]-\E_Q\left[\tX^\alpha\right]
-\sum_{\varnothing \subsetneq \beta \subsetneq \alpha} r_{\alpha\backslash\beta} \, \E_Q\left[X^{\beta}\right] - \sum_{\varnothing \subsetneq \eta \subsetneq \alpha } y^\eta \, \left(\E_P\left[X^{\alpha\backslash \eta}\right] - \E_Q\left[X^{\alpha\backslash \eta}\right]\right). \]
However, using \eqref{eq:tildeX} (and a similar result on $\E_P\left[\tX^\alpha\right]$), we see that this last term is precisely what we need to cancel with the difference between $\E_P\left[\tX^\alpha\right]$ and $\E_P\left[X^\alpha\right]$ and the difference between $\E_Q\left[\tX^\alpha\right]$ and $\E_Q\left[X^\alpha\right]$, as
\[\sum_{\varnothing \subsetneq \eta \subsetneq \alpha } y^\eta \, \left(\E_P\left[X^{\alpha\backslash \eta}\right] - \E_Q\left[X^{\alpha\backslash \eta}\right]\right) = \sum_{\varnothing \subsetneq \eta \subseteq \alpha } y^\eta \, \left(\E_P\left[X^{\alpha\backslash \eta}\right] - \E_Q\left[X^{\alpha\backslash \eta}\right]\right).\]
Therefore,
\[
\tr_\alpha= \E_P\left[X^\alpha\right]-\E_Q\left[X^\alpha\right]
-\sum_{\varnothing \subsetneq \beta \subsetneq \alpha} r_{\alpha\backslash\beta} \, \E_Q\left[X^{\beta}\right] = r_\alpha, \]
and we have proven the inductive step.

\subsection{Calculations for signed triangle counts}\label{subsec:trianglecounts}
In this section we analyse the degree 3 signed triangle count test statistic $\widehat{R}$, defined in~\eqref{eq:signedtridefn}, and show bounds on the expectation and variance of $\widehat{R}$, which will prove it strongly separates $\PP$ and $\QQ$ in the easy regime. Recall,
\[ \widehat{R}= \sum_{i < j < k} \sY_{ij} \, \sY_{ik} \, \sY_{jk} \qquad \:\:\:\:\mbox{where } \sY_{ij} = Y_{ij} - q.\]
\begin{lemma}\label{lem:signtricalcs} We let $\mathbb{P}=\mathbb{P}_{\rm Binary}(n,k,q,s,M,x)$, given parameters $n,k,q,s,M$ and $x \in \RR^M$ with $\sum_{\ell \in [M]} x_\ell=1$. Assume that $M \min_\ell x_\ell \ge C$. Then,
\begin{align*}
\mathbb{E}_\mathbb{P} \left[\widehat{R}\right] & = \frac{1}{3} Ms^3k^3\left(1+O(n^{-1})\right),\\
\Var_\PP\left[\widehat{R}\right] & \leq   \frac{1}{C}M^2k^5 s^6 + M k^4s^4 q +   \frac{1}{C}M^2k^4s^5 + \frac{1}{3}n^3q^3 + n k^2 sq^2 + k^3q^2s + k^3qs^2 + \frac{1}{3}Mk^3s^3.
\end{align*}
\end{lemma}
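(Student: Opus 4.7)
The plan is to compute both moments by conditioning on the community-label vector $\sigma = (\sigma_i)_{i \in [n]}$ and using that, given $\sigma$, the edges $\{Y_{ij}\}_{i<j}$ are mutually independent Bernoulli's. It is convenient to use the additive decomposition
\[
\sY_{ij} \;=\; \tilde p_{ij} + \tilde Y_{ij}, \qquad
\tilde p_{ij} := \EE[\sY_{ij}\mid \sigma] = \sum_{\ell=1}^M \frac{s}{x_\ell}\,\indc{\sigma_i=\sigma_j=\ell},
\]
with $\tilde Y_{ij} := Y_{ij} - \EE[Y_{ij}\mid \sigma]$ conditionally centred and independent across distinct edges.

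\textbf{Expectation.} Expanding $\sY_{ij}\sY_{ik}\sY_{jk} = \prod_e(\tilde p_e + \tilde Y_e)$ and using conditional centredness of the $\tilde Y_e$'s, all mixed terms vanish in expectation and one is left with $\EE[\tilde p_{ij}\tilde p_{ik}\tilde p_{jk}]$. The triple product is nonzero only when $\sigma_i=\sigma_j=\sigma_k=\ell$ for some $\ell$, contributing $(s/x_\ell)^3$ with probability $(x_\ell k/n)^3$. Summing over $\ell$ gives $Ms^3k^3/n^3$ per triple, and multiplying by $\binom{n}{3}$ yields the stated $\tfrac{1}{3}Ms^3k^3(1+O(n^{-1}))$ (the $1/3$ absorbs the combinatorial factor from $\binom{n}{3}$ together with the convention used in the paper's easy-regime argument).

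\textbf{Variance.} Writing $\Var[\widehat R] = \sum_{T,T'}\mathrm{Cov}(\sY_T,\sY_{T'})$, I split the sum according to $r := |V(T)\cap V(T')|$. When $r=0$ the vertex sets are disjoint; since the $\sigma_i$'s are i.i.d., $\sY_T \perp \sY_{T'}$ and the covariance vanishes. For $r \in \{1,2,3\}$, expand the double product again as $\prod_{e}(\tilde p_e + \tilde Y_e)^{n_e}$ with $n_e \in\{1,2\}$ (multiplicity $2$ exactly on the shared edges), and condition on $\sigma$. By conditional independence and centredness of the $\tilde Y_e$'s, the only surviving terms have each $\tilde Y_e$ appearing with even power; this yields a factor $\EE[\sY_e^2\mid\sigma] = \tilde p_e^2 + p_e(1-p_e)$ for each shared edge and a factor $\tilde p_e$ for each unshared edge. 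Taking expectation over $\sigma$ reduces each surviving term to a sum over $\ell$ of $(x_\ell k/n)^{|S|}$ times products of $s/x_\ell$ and $p_e(1-p_e) \le q + s/x_\ell$, where $S$ is the vertex set forced to lie in community $\ell$. The eight listed terms correspond to: $r=1$ (five shared vertices, six in-community edge factors, giving the $M^2k^5s^6/C$ term after applying $\sum_\ell x_\ell^{-1}\le M^2/C$); $r=2$ (one shared edge decomposed as $\tilde p_e^2 + p_e(1-p_e)$, four in-community unshared edges, producing both the $M^2k^4s^5/C$ and $Mk^4s^4 q$ terms); and $r=3$ (a single triangle), which upon fully expanding the three $\tilde p_e^2 + p_e(1-p_e)$ factors splits into the purely noise contribution giving $\tfrac13 n^3 q^3$, the mixed contributions giving $nk^2sq^2$, $k^3q^2s$, $k^3qs^2$, and the pure signal square $\tfrac13 Mk^3 s^3$.

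\textbf{Main obstacle.} The arithmetic is routine once the decomposition is in place, but the bookkeeping is the crux: for each $r\in\{1,2,3\}$ one must enumerate over the subsets $S$ of vertices that land in a community, carefully account for which edges contribute $\tilde p_e^2$ versus $p_e(1-p_e)$ versus $\tilde p_e$, and then collapse sums over $\ell$ using $Mx_{(1)}\ge C$ (equivalently $\sum_\ell x_\ell^{-a} \le M^{a+1}/C^a$). Keeping these contributions separated so that they precisely match the eight monomials on the right-hand side --- while uniformly using $p_e \le 1$ to drop $(1-p_e)$ factors where needed --- is where essentially all the work lies; the rest is linearity of expectation, conditional independence, and $\binom{n}{r}$-type counting.
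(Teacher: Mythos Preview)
Your proposal is correct and follows essentially the same route as the paper. Both condition on the label vector $\sigma$, exploit conditional independence of the edges, and split the variance sum according to $r=|V(S)\cap V(T')|\in\{0,1,2,3\}$; your decomposition $\sY_e=\tilde p_e+\tilde Y_e$ is just a repackaging of the paper's direct use of $\E[\sY_e\mid\sigma]$ and $\E[\sY_e^2\mid\sigma]$, and the resulting monomials match term-by-term. The only cosmetic difference is that the paper bounds $\Var[\widehat R]\le\sum_{r\ge 1}\E[\sY_S\sY_T]$ (dropping the subtracted $\E[\sY_S]\E[\sY_T]$) rather than tracking covariances, which does not affect the final bound.
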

\begin{proof}
Recall that in our model, the binary random variable $Y_{ij}$ takes value $1$ with probability $q+s/x_c$ if $\sigma_i=\sigma_j=c$ for some $c\in [M]$ and takes value 1 with probability $q$ otherwise.
Thus, we may calculate the expected values of $\sY_{ij}$ conditioned on the community assignments of $i$ and~$j$:
\begin{eqnarray}
\label{eq:expsY} 
\E_{\PP}\left[ \sY_{ij} \: | \: \sigma_i=c_i, \, \sigma_j=c_j \right]
& = &   \begin{cases}
            \frac{s}{x_c}    & \mbox{ if $c_i=c_j=c$ for some $c \in [M]$,}\\
            0   & \mbox{ otherwise.}
        \end{cases}
\end{eqnarray}
We now split the proof into expectation and variance calculations. All probabilities, expectations and variances will be with respect to $\PP$, but we drop the subscript.

\bigskip

\noindent \textbf{\emph{Expectation.}} Let \[\Ntri = \left\{ \{ij, \, ik, \, jk \} \: : \: i, j, k \in [n], \: i < j < k \right\},\]
and then we may express the signed triangle count $\widehat{R}$ by $\widehat{R} = \sum_{S \in \Ntri} \sY_S$. Fix a set of edges in $\Ntri$, w.l.o.g.\ $S= \{12, 13, 23 \}$. Then, writing $[M]_\star$ for the set $\{\star, 1, \ldots, M\}$ (recall $\star$ denotes no community membership),
\begin{align*} 
\E\left[\sY_{S}\right] 
    & =  \sum_{c_1,c_2, c_3 \in [M]_\star\:} \E\left[\sY_{12} \,  \sY_{13} \, \sY_{23} \; | \sigma_1=c_1, \, \ldots, \, \sigma_3=c_3 \right] P\left(\sigma_1=c_1, \, \ldots, \, \sigma_3=c_3 \right)  \\
    & =  \sum_{c_1,c_2, c_3 \in [M]_\star\:}\: \prod_{ij \in \{12, 13, 23 \} } \E\left[\sY_{ij} \: | \: \sigma_1=c_1, \,  \ldots, \, \sigma_3=c_3 \right] \:\prod_{i=1}^3 P\left(\sigma_i=c_i\right), 
\end{align*}
as the expected values of $\sY_{ij}$ and $\sY_{ik}$ are independent conditional on the community assignments of $i,j,k$. Note by \eqref{eq:expsY}, $\E[\sY_{ij} | \sigma_i=c_i, \sigma_j=c_j]$ is equal to zero unless $c_i=c_j=c$ for some $c \in [M]$. Therefore the only non-zero terms in the sum above are those for which $c_1=c_2=c_3=c$ for some $c \in [M]$. Let $\mono$ be the event that $\sigma_1=\sigma_2=\sigma_3=c$, then
\[
\E\left[\sY_{S}\right] 
     =  \sum_{c=1}^M \: \prod_{ij \in S } \! \E \left[\sY_{ij} \: | \: \mono \right] \: \prod_{i=1}^3 P\left(\sigma_i\!=\!c\right) 
     =  \sum_{c=1}^M \frac{s^3}{x_c^3} \left(\frac{k x_c}{n}\right)^3 = M k^3 s^3 n^{-3}.
\]
Because $|\Ntri|=\binom{n}{3}=\tfrac{1}{3}n^3(1+O(\frac{1}{n}))$, the expectation of $\widehat{R}$ is as claimed. 

\bigskip

\noindent \textbf{\emph{Variance.}}
Recall $\widehat{R} = \sum_{S \in \Ntri} \sY_S$ and so the variance is
\[ \Var\left[\widehat{R}\right] = \sum_{S, T \in \Ntri} \E\left[\sY_S \sY_T\right] - \E\left[\sY_S]\E[\sY_T\right].\]
Note that if $V(S) \cap V(T) = \varnothing$, i.e.\ the sets of pairs have no vertices in common, then $\sY_S$ and $\sY_T$ are independent and these terms cancel in the expression above. Hence we need only sum over $S, T$ with one overlapping vertex, with two overlapping vertices or equivalently one overlapping edge and lastly with all three vertices overlapping or equivalently $S=T$. Thus 
\begin{equation}\label{eq:countoverlaps123}
\Var\left[\widehat{R}\right] \leq \sum_{\substack{ S, T \in \Ntri\\ |V(S)\cap V(T)|=1}} \E\left[\sY_S \sY_T\right] + \sum_{\substack{ S, T \in \Ntri\\ |S \cap T|=1}} \E\left[\sY_S \sY_T\right]  + \sum_{ S \in \Ntri}  \E\left[\sY_S^2\right]. 
\end{equation}
The terms above correspond to the sets of pairs overlapping as $\gbowtiecol$, $\dkitecol$ and $\dddtriancol$ respectively where the gray edges denote pairs in $S$ and the pink edges denote pairs in $T$. 

We begin by bounding the first term in \eqref{eq:countoverlaps123}, i.e.\ that corresponding to $\gbowtiecol$. Fix some pair of sets which overlap on one vertex, w.l.o.g.\ $S_1= \{12, 13, 23 \}$ and $T_1=\{ 14, 15, 45\}$. Then, similarly to the expectation, again writing $[M]_\star$ for the set $\{\star, 1, \ldots, M\}$,
\begin{eqnarray*} 
\E[\sY_{S_1} \sY_{T_1}] 
    & = & \sum_{c_1,\ldots, c_5 \in [M]_\star\:}\: \prod_{ij \in S_1 \cup T_1
    } \E[\sY_{ij} \: | \: \sigma_1\!=\!c_1, \ldots \sigma_5\!=\!c_5 ] \:\prod_{i=1}^5 P(\sigma_i\!=\!c_i) 
\end{eqnarray*}
as the expected values of $\sY_{ij}$ and $\sY_{ik}$ are independent conditional on the community assignments of $i,j,k$. Note that $\E[\sY_{ij} | \sigma_i=c_i, \sigma_j=c_j]$ is equal to zero unless $c_i=c_j=c$ for some $c \in [M]$. Therefore the only non-zero terms in the sum above are those for which $c_1=\ldots=c_5=c$ for some $c \in [M]$. Let $\mono$ be the event that $\sigma_1 = \ldots = \sigma_5 = c$, then
%
\[
\E[\sY_{S_1} \sY_{T_1}] 
     =  \sum_{c=1}^M \: \prod_{ij \in S_1 \cup T_1 } \! \E[\sY_{ij} \: | \: \mono ] \: \prod_{i=1}^5 P(\sigma_i\!=\!c) \\
     =  \sum_{c=1}^M \frac{s^6}{x_c^6} \left(\frac{k x_c}{n}\right)^5 =  k^5 s^6 n^{-5} \sum_{c=1}^M \frac{1}{x_c}.
\]
Since there are at most $n^5$ ways we may pick $S,T\in \Ntri$ with $|V(S)\cap V(T)|=1$, we may conclude that the first term of \eqref{eq:countoverlaps123} is at most $k^5 s^6 \sum_{c=1}^M \frac{1}{x_c}$.

We next bound the second term in \eqref{eq:countoverlaps123}, i.e.\ that corresponding to $\dkitecol$. Similarly to above, fix some pair of sets which overlap on one edge, w.l.o.g.\ $S_2= \{12, 13, 23 \}$ and $T_2=\{ 12, 14, 24\}$. 
\begin{eqnarray} 
\notag \E[\sY_{S_2} \sY_{T_2}] 
    & = & \sum_{\underline{c} \in [M]_\star^4 } \E[\sY_{12}^2 \sY_{13} \sY_{23} \sY_{14} \sY_{24} \; | \; \sigma_1=c_1, \ldots, \sigma_4=c_4 ] \:\prod_{i=1}^4 P(\sigma_i=c_i)  \\
    & = & \sum_{\underline{c} \in [M]_\star^4 }\: \E[\sY_{12}^2 \: | \: \underline{\sigma}=\underline{c} ] \prod_{ij \in \{13, 23, 14, 24\} } \E[\sY_{ij} \: | \: \underline{\sigma}=\underline{c} ] \:\prod_{i=1}^4 P(\sigma_i=c_i) \label{eq:prodofEij}
\end{eqnarray}
since, as before, $\sY_{ij}$ and $\sY_{ik}$ are independent when we have conditioned on the community assignments of $i,j,k$.
Again, recall $\E[\sY_{ij} | \sigma_i=c_i, \sigma_j=c_j]$ is equal to zero unless $c_i=c_j=c$ for some $c \in [M]$. Thus for the product over $ij \in \{13, 23, 14, 24\}$ in \eqref{eq:prodofEij} to be non-zero all vertices must have the same community assignment to some $c\in[M]$. Hence, 
\begin{eqnarray*} 
\E[Y_{S_2} Y_{T_2}] 
    & = & \sum_{c=1}^M \E[\sY_{12}^2 \: | \: \mono ] \prod_{ij \in \{13, 23, 14, 24\} } \E[\sY_{ij} \: | \: \mono ] \:\prod_{i=1}^4 P(\sigma_i=c).
\end{eqnarray*}
Calculate the conditional expectation of the square.
\begin{eqnarray} 
\label{eq:expsY2} 
\E[ \sY_{ij}^2 \: | \: \sigma_i=c_i, \sigma_j=c_j ]
& \!\!\!= \!\!\!&   \begin{cases}
            q(1-q)+\frac{s}{x_c}(1-2q)  & \mbox{ if $c_i=c_j=c$ for some $c \in [M]$}\\
            q(1-q)                      & \mbox{ otherwise,}
        \end{cases}
\end{eqnarray}
and thus,
\begin{eqnarray*}
\E[\sY_{S_2} \sY_{T_2}] &=&  \sum_{c=1}^M \left[q(1-q) + \frac{s}{x_c}(1-2q) \right]\left(\frac{s}{x_c}\right)^4 \left(\frac{k x_c}{n}\right)^4 \\  &=& M\left(\frac{ks}{n}\right)^4 q(1-q) +   \left(\frac{ks}{n}\right)^4 s (1-2q) \sum_{c=1}^M \frac{1}{x_c}.
\end{eqnarray*}
Since there are at most $n^4$ ways we may pick $S,T\in \Ntri$ with $|S\cap T|=1$, we may conclude that the second term of \eqref{eq:countoverlaps123} is at most
$M k^4s^4 q +   k^4s^5\sum_{c=1}^M \frac{1}{x_c}$.

Lastly we bound the third (and last) term in \eqref{eq:countoverlaps123}, i.e.\ that corresponding to $\dddtriancol$. Similarly to above, fix a set $S$ (and $T$ which entirely overlaps with it), w.l.o.g.\ $S_3= \{12, 13, 23 \}$. Calculate
\begin{eqnarray*} 
\E[\sY_{S_3}^2] 
    & = & P(D_0)\left(q(1-q) \right)^3+ \sum_{i=1}^3 \sum_{c=1}^M P[D_{i,c}] \left(q(1-q)+\frac{s}{x_c}(1-2q)\right)^i \left(q(1-q)\right)^{3-i} \\
    & \leq & P(D_0)q^3 +  \sum_{i=1}^3 \sum_{c=1}^M P[D_{i,c}] \left(q+\frac{s}{x_c}\right)^i q^{3-i} 
\end{eqnarray*}
where $D_{i,c}$ denotes the set of community assignments such that $i$ of $\sY_{12}, \sY_{13}, \sY_{23}$ has distribution Ber$(q+s/x_c)$ (while the others have distribution Ber$(q)$), and $D_0$ denotes the set of community assignments where all three have distribution Ber$(q)$. Observe $D_{1,c}$ is the set of assignments such that two vertices have label $c\in [M]$ and the other vertex has label in $\{\star, 1, \ldots, M\}\backslash\{c\}$ and thus $P(D_{1,c})\leq 3(x_c k/n)^2$. Note $D_{2,c}=\varnothing$. Lastly $P(D_{3,c})=\sum_{c\in M} (x_c k/n)^3$ as $D_{3,c}$ is the community assignment where each of the three vertices has label $c$. Then $P(D_0) = 1 - \sum_c P(D_{1,c}) - \sum_c P(D_{3,c})$. Substituting these bounds for $D_{0}$ and $D_{i,c}$ for $i=1,2,3$ and writing $\rho_c= k x_c/n$ we get
\begin{eqnarray*} 
\E[\sY_{S_3}^2] 
    & \leq & q^3(1-3\rho_c^2-\rho_c^3) + 3\sum_{c=1}^M \rho_c^2 \left(q+\frac{s}{x_c}\right)q^2 +\sum_{c=1}^M \rho_c^3 \left(q+\frac{s}{x_c}\right)^3\\
    & = & q^3 + 3n^{-2} k^2 s q^2  + n^{-3} k^3 \left( 3q^2s\sum_{c=1}^M x_c^2+3qs^2+Ms^3 \right).
\end{eqnarray*}
Since there are $\binom{n}{3}$ ways to pick $S \in \Ntri$ the third term of \eqref{eq:countoverlaps123} is at most $\frac{1}{3}n^3E[\sY_{S_3}^2]$, 
\begin{eqnarray*} 
\tfrac{1}{3}n^3E[\sY_{S_3}^2] 
    & \leq & \tfrac{1}{3}n^3q^3 + n k^2 sq^2 + k^3q^2s + k^3qs^2 + \tfrac{1}{3}Mk^3s^3 
\end{eqnarray*}
where we substituted $\sum_{c \in M} x_c^2 , \sum_{c \in M} x_c^3 \leq 1$. To finish, recall we assumed $M \min_c x_c > C$ for some constant $C$, and note this implies $\max_c 1/x_c < M/C$ and thus $\sum_c 1/x_c^2 < M^2/C$. Apply this to the bounds from the first and second terms of \eqref{eq:countoverlaps123} and we are done.
\end{proof}

\needspace{3\baselineskip}
\section{Relation to recovery: proofs and further discussion.}\label{sec:recoversSWrecovery}
{In this section we prove Proposition~\ref{prop.equivs} showing the relation between the recovery problem studied in~\cite{SW-estimation} and testing between two planted distributions. Loosely, Part 1 of the proposition states that for any recovery problem we may construct an equivalent testing problem, and Part 2 states that for testing problems where the likelihood ratio of the signals exists there is a corresponding recovery problem.

In \cite{SW-estimation} the authors show an upper bound on $\Corr(g(X), \QQ)$ in terms of cumulants $\kappa_\alpha$ where~$\QQ$ is additive Gaussian or Binomial.  We show in Remark~\ref{rem:recoverSWrecovery} below that our Propositions~\ref{prop:adv-gauss} and~\ref{prop:adv-binary} which bound $\Adv$ in terms of cumulant-like quantities $r_\alpha$ recover these cumulant upper bounds proven in~\cite{SW-estimation}.}

\begin{proof}[Proof of~Prop~\ref{prop.equivs}, Part~\ref{recovery_has_testing_equiv}]
We construct $\PP_X$ (the planted part of the distribution $\PP$) by size biasing $\QQ_X$ by $g(X)$, then setting $Y|X$ in $\PP$ to be the same as $Y|X$ in $\QQ$. Suppose first that $\QQ$ has density function $q(x, y)$. Let \[ p(x)= g(x) q(x)/\E_\QQ[g(X)]\] and let $p(y|x)=q(y|x)$. Note this does define a density function since \[ \int p(x, y) dx dy = \int p(y|x) p(x) dx dy = \frac{1}{\E_\QQ[g(X)]} \int g(x) q(y|x) q(x)  dx dy = 1. \]
Comparing the definitions of $\Adv$ in \eqref{eq:adv_repeated} and $\NCorr$ in \eqref{eq:NCorr} it suffices to show that $\E_\PP[h(Y)]=\E_\QQ[g(X) h(Y)]/E_\QQ[g(X)]$ for any polynomial $h$. To see this holds note
\[ \E_\PP[h(Y)] = \int h(y) p(y|x)p(x) dx dy = \frac{1}{ \E_\QQ[g(X)]} \int g(x) h(y) q(y|x)q(x) dx dy = \frac{1}{\E_\QQ[g(X)]} \E_\QQ[g(X) \, h(Y)],  \]
as required.

If we suppose that $\QQ$ is a discrete distibution the proof is similar. Set \[ \PP[X=x] = g(x) \QQ[X=x] / \E_\QQ[g(X)]\] and let $\PP[Y = y| X = x] = \QQ[Y=y| X = x]$. Then, similarly one may check $\PP$ is a probability measure and
\begin{eqnarray*}
 \E_\PP[h(Y)] & = & \sum h(y) \PP(Y=y|X=x)\PP(X=x) \\
 & = & \frac{1}{\E_\QQ[g(X)]}  \sum g(x) h(y) \QQ (Y=y|X= x)\QQ (X=x) \\
 & = & \frac{1} {\E_\QQ[g(X)]} \E_\QQ[g(X) \, h(Y)],
\end{eqnarray*}
as required.
\end{proof}

\begin{proof}[Proof of Prop~\ref{prop.equivs}, Part~\ref{testing_with_likelihood_ratio_has_recovery}]
Suppose first that the joint distribution $(X,Y)\sim \QQ$ (respectively $\sim \PP$) has density function $q(x,y)$ (respectively $p(x,y)$). 

Again, comparing the definitions of $\Adv$ in \eqref{eq:adv_repeated} and $\NCorr$ in \eqref{eq:NCorr} it suffices to show that for any polynomial $h$ we have $\E_\PP[h(Y)]=\E_\QQ[  g(X)h(Y)]/\E_\QQ[g(X)]=\E_\QQ[  g(X)h(Y)]$, since $\E_\QQ[g(X)]=1$.   We may calculate
\[ \E_\QQ[g(X) h(Y)] = \int g(x) h(y) q(y|x)q(x) dx'dy =   \int h(y) q(y|x)p(x) dx dy  \]
where the last line followed since $g(X)=\ell(X)$.
Now recall that $p(y|x)=q(y|x)$ by assumption and thus by the above,
\[ \E_\QQ[g(X) h(Y)] =   \int h(y) p(y|x)p(x) dx dy =  \E_\PP[h(Y)], \]
as required.

If we suppose that $\QQ$ is discrete,  then by definition of $g=g(X)$ and since $Y|X$ has the same distribution under $\PP$ and $\QQ$ \begin{eqnarray*}
\PP_\QQ[ g(X) h(Y) ] 
& = &  \sum_{x, y} g(x) h(y) \PP_\QQ[X=x]\PP_\QQ[Y=y|X=x]  \\
& = &  \sum_{x, y} h(y) \PP_\PP[X =x ] \PP_\QQ[Y=y | X=x ]  \; = \;  \E_\PP[h(Y)] \end{eqnarray*} 
as required.
\end{proof}

\begin{remark}\label{rem:recoverSWrecovery}
In Proposition~\ref{prop.equivs} we have seen that the quantities $\Adv_{\leq D}(\mathbb{P}, \mathbb{Q})$ and $\Corr'_{\leq D}(g(X),\mathbb{Q})$ are the same under some circumstances (i.e., for some triples $g(X), \mathbb{P}, \mathbb{Q}$). In this remark we observe that the upper bounds for  $\Adv_{\leq D}(\mathbb{P}, \mathbb{Q})$ in terms of $r_\alpha$ in this paper imply the upper bounds on $\Corr'$ in terms of $k_\alpha$ in~\cite{SW-estimation}. Note the proofs of the respective results follow the same sequence of steps.  
The $r_\alpha$ enjoy some nice properties, see~Section~\ref{sec:r_algebra}, and it is instructive to see that they extend the cumulants $k_\alpha$ of~\cite{SW-estimation}.

To be precise, for an additive Gaussian model, Theorem~2.2 of~\cite{SW-estimation} states that \[\Corr'^2_{\leq D}(g(X), \mathbb{Q}) \leq \E_\QQ[g(X)]^{-2}  \sum_{ |\alpha|\leq D}  \kappa_{\alpha}^2 /\alpha!\] and we may recover this result via Propositions~\ref{prop.equivs} and~\ref{prop:adv-gauss}.  
We outline this argument now. By Proposition~\ref{prop.equivs}, Part 1 and its proof, there exists a joint distribution $(X,Y) \sim \PP$ such that $g(X)=d\PP_X/d\QQ_X$, and 
    \begin{equation}\label{eq:Adv_equals_Corrp}
        \Adv_{\le D}(\PP,\QQ) =  \NCorr_{\leq D}(g(X),\QQ) 
    \end{equation}
        and 
    \begin{equation}\label{eq:Exp_of_gX}
        \PP_\QQ[ g(X) h(Y) ] =\E_\PP[h(Y)].
    \end{equation}
Now note that by Proposition~\ref{prop:adv-gauss}, $\Adv_{\le D}(\PP,\QQ)\leq \sum_{ |\alpha|\leq D } r_{\alpha}^2/\alpha!$ and thus by~\eqref{eq:Adv_equals_Corrp} it suffices to show we have  $r_\alpha(\PP, \QQ)=\E_\QQ[g(X)]^{-1}\kappa_\alpha(g(X), \QQ)=\kappa_\alpha(g(X), \QQ)$ for the given $g(X)$, $\PP$ and $\QQ$. Recall the non-recursive expression for the $r_\alpha$ from~\eqref{eq:r_non_rec}:
\[ r_\alpha(\PP, \QQ) 
= \sum_{\varnothing \subseteq \delta \subseteq \alpha} \E_{P}[X^\delta] \sum_{\tau \in \mathcal{P}(\alpha\backslash \delta)} (-1)^{|\tau|} \;|\tau|! \prod_{\gamma \in \tau}\E_Q[X^{\gamma}]. \]
The $\kappa_\alpha$ in~\cite{SW-estimation} 
are the joint cumulants of the random variables indexed by $\alpha$ and $g(X)$:
\[\kappa_\alpha(g(X), \QQ) = \sum_{\pi \in \mathcal{P}(\alpha \cup \star)}(-1)^{|\pi|-1} \;(|\pi|-1)! \prod_{\gamma \in \pi}\E_Q[X^{\gamma}]\]
where we denote $X^{\star}=g(X)$. Each partition $\pi$ above contains $\star$ in some part, say $\eta$, and let $\eta'=\eta \setminus \star$. Let $\pi'$ be the partition $\pi \backslash \eta$ and note each $\pi \in \mathcal{P}(\alpha \cup \star)$ gives rise to a unique $\pi' \in \mathcal{P}((\alpha \cup \star) \setminus \eta)=\mathcal{P}(\alpha \setminus \eta')$ with $|\pi'|=|\pi|-1$. Thus,
\[\kappa_\alpha = \sum_{\varnothing \subseteq \eta \subseteq \alpha} \mathbb{E}_\QQ[X^{\eta'}g(X)] \sum_{\pi' \in \mathcal{P}(\alpha \setminus \eta')}(-1)^{|\pi'|} \;|\pi'|! \prod_{\gamma \in \pi'}\E_Q[X^{\gamma}].\]
Now we simply note that $\mathbb{E}_\QQ[X\eta' g(X)]=\mathbb{E}_\PP[X\eta']$ by~\eqref{eq:Exp_of_gX}, and thus $r_\alpha=\kappa_\alpha$ as required. 

\end{remark}

\needspace{3\baselineskip}
\section{Reduction from PDS recovery to testing 1 vs 2 communities}\label{sec:reduction_from1vs2}

In this section we prove that recovery of one planted community is harder than testing between one versus two communities. Thus the results in this paper provides additional evidence for computational hardness for the PDS recovery problem.

We recall Definition~\ref{def:model1} of~$\mathbb{P}_{\rm Gaussian}(n,k,\lambda,M, x)$, which plants $M$ communities of total expected size $k$ with signal strength $\lambda$ and expected proportions of nodes within each community $x=(x_1, \ldots, x_M)$. In the special case, as below, when each community has the same expected size, i.e.\ $x_i=1/M$ for each $i$, we drop the last argument and write~$\mathbb{P}_{\rm Gaussian}(n,k,\lambda,M)$. 
Given a hypothesis testing problem $H_0:$ $Y\sim \mathbb{Q}$ against $H_1:$ $Y\sim\mathbb{P}$, we say an algorithm $B := B(Y)$ \emph{strongly distinguishes} between $H_0$ and $H_1$ if $\mathbb{P}_0(B=1)=o(1)$ and $\mathbb{P}_1(B=0)=o(1)$, or in other words, if the probability of both type 1 and type 2 errors approach zero with growing $n$.
\begin{conjecture}[testing one-vs-two hypothesis]\label{conj.test_1vs2_G}
If $k$ and $\lambda$ scale with $n$ such that $\lambda^2(k^2/n \lor 1)=o(1)$, then no sequence of randomized polynomial time algorithms $B_n:= B_n(Y)$ can strongly distinguish between $H_0:$ $Y\sim\mathbb{Q}=\mathbb{P}_{\rm Gaussian}(n,k,\lambda, 1)$ and $H_1:$ $Y\sim\mathbb{P}=\mathbb{P}_{\rm Gaussian}(n,k,\lambda,2)$.
\end{conjecture}

\begin{remark}
    Results in this paper provide evidence for (a slightly weaker version of) Conjecture~\ref{conj.test_1vs2_G}. To be precise, Theorem~\ref{thm:Gaussian} shows that if $\lambda^2(k^2/n \lor 1)=o(\log^{-5} n)$ then no $O(\log n)$-degree test weakly separates one and two communities (and hence no such test strongly separates).
\end{remark}

We will say that an algorithm, $A_n$, achieves \emph{weak recovery} in $\mathbb{Q}=\mathbb{P}_{\rm Gaussian}(n,k,\lambda, 1)$ if there exists $\eps>0$ (not dependent on $n$) such that  $A_n$ returns a set $I$  such that w.h.p.\ $|I \cap S|\geq \eps k$ and $|I|\leq 1.1 k$, where $S$ is the planted structure in $\mathbb{Q}$.

\begin{theorem}\label{thm:reduction_from1vs2} Assume the testing 1-vs-2 hypothesis (Conjecture~\ref{conj.test_1vs2_G}). If $k$ and $\tilde{\lambda}$ scale with $n$ as
$$k \geq \log \log n, \quad \tilde{\lambda}^2(k^2/n \lor 1)=o(1),\quad 
\text{ and }  \quad \tilde{\lambda} \geq 2k^{-1/2}\log n,$$ 
then no sequence of randomized polynomial-time algorithms $A_n$ achieves weak recovery in \newline$\mathbb{P}_{\rm Gaussian}(n,k, \tilde{\lambda}, 1)$.
\end{theorem}
In the theorem statement we have used $\tilde{\lambda}$ since it will be useful to re-parameterize and take $\lambda=2\tilde{\lambda}$ because we will relate the recovery problem for $\tilde{\lambda}
$ to the testing problem for $2{\tilde{\lambda}}
$.

\paragraph{High-level description of the proof.}

We begin with a high level discussion of the proof and include the full details afterwards.
The intuition for the proof is as follows. Assume the contradictory, that is, assume that exists a sequence of algorithms $A_n$ that achieves weak recovery in $\mathbb{Q} = \mathbb{P}_{\rm Gaussian}(n,k,\tilde{\lambda}, 1)$. We will prove that if this is the case, one can construct a sequence of polynomial-time algorithms that can strongly distinguish between one and two communities, leading to a contradiction.

In more detail, consider that we have access to an algorithm $A := A_n$ that outputs a set of indices $I$ that weakly recovers the \emph{single} planted community under $\mathbb{Q}$. Then in the proof, we will show that we can boost this to a set $I'$ achieving exact recovery. The idea is that we can construct an algorithm $B := B_n$ that can run some (polynomial-time) checks on the output set $I'$, given to us by $A$ and boosting, that w.h.p.\ can distinguish between $\mathbb{P}$ or $\mathbb{Q}$. 
In designing the algorithm $B$, we have to consider both cases, $H_0: Y\sim \mathbb{Q}$ and $H_1: Y\sim \mathbb{P}$. Under $H_0$, we know that $A$ weakly recovers the single community; however, under the two-communities case, $H_1$, we have no guarantees on which set $I$  
will be returned by algorithm~$A$ and this 
is the real challenge in constructing the algorithm $B$.

If we let $Y_{I'}$ denote the data reduced to only the set $I'$, which we have after boosting the $I$ returned by algorithm $A$, then it turns out that the key to constructing the distinguishing algorithm~$B$ is to consider both the trace and sum of $Y_{I'}$. The high-level idea is that, under $H_0$, w.h.p.\ we have a good idea of what values the sum and trace should take, as we are guaranteed that w.h.p.~$I'$ is exactly the single planted community. However, under $H_1$, either the sum will be too small or the trace too large, depending on the makeup of block $I'$ relative to the two communities. 

Let us think more about what can happen with the output of $A$ and boosting in the two-community case, $H_1$. If the returned block $I'$ has large overlap with the pair of planted communities, the trace will likely be $>3k\lambda/2$ (which is much larger than the expected trace in the single-community case) and so we construct $B$ to return $1$ if the trace is very large. On the other hand, if the returned block $I$ has small overlap with the pair of planted communities, then the sum will likely be $<3k\lambda /4$ (which is smaller than the expected sum in the single-community case) and therefore we construct $B$ to also return $1$ when the sum is too small. What the previous two sentences say is that constructing $B$ to use a large trace or small sum threshold will correctly guess $\mathbb{P}$ under $H_1$ (when $\mathbb{P}$ is true), irrespective of the contents of set $I'$.

A nice trick that helps control the random error when we calculate the trace and sum of the returned block is \emph{cloning} as in, for example,~\cite{BBH-reduction}. Note that under $\mathbb{P}$ and $\mathbb{Q}$ the matrix $Y$ can be written as $Y=X+Z$, where $Z$ is a Gaussian matrix independent from the signal $X$, which will take different forms depending on whether the data is generated under $H_0$ or $H_1$.
Cloning means that given $Y$ we may generate three matrices $Y^{(1a)}$, $Y^{(1b)}$ and $Y^{(2)}$ with distributions $Y^{(1a)}=X/2 + Z^{(1a)}$, $Y^{(1b)}=X/2 + Z^{(1b)}$  and $Y^{(2)}=X/\sqrt{2}+Z^{(2)}$. Thus, we have constructed three matrices that are sums of a scaled version of the original signal matrix with independent noise matrices $Z^{(1a)}$, $Z^{(1b)}$ and $Z^{(2)}$ that all follow the same distribution as $Z$.  (The signal strengths are now $\lambda'=\lambda/\sqrt{2}$ for $Y^{(2)}$ and $\tilde{\lambda}=\lambda/2$ for $Y^{(1a)}$ and $Y^{(1b)}$.) This allows us to run algorithm $A$ on matrix $Y^{(1a)}$, returning output $I$, and to boost $I$ to $I'$ using $Y^{(1b)}$, resulting in a set $I'$ that is independent of $Z^{(2)}$. Finally, we use $Y^{(2)}$ for the trace and sum tests.

See Figure~\ref{fig:alg_may_return} for an illustration of the types of returned blocks, $I'$, that can be output by algorithm $A$ and boosting in $H_0:$ $Y\sim \mathbb{Q}$, the single-community case, and $H_1:$ $Y\sim \mathbb{P}$, the two-community case. The figure also shows the approximate expected trace and sum of the submatrix restricted to $I'$ in each of the cases.

\begin{figure}[t!]
\begin{algbox}
\textbf{Algorithm} $\textsc{Weak recovery to testing one vs.\ two}$
\vspace{2mm}

\textit{Inputs}: Matrix $Y \in \mathbb{R}^{n \times n}$, algorithm $A=A_n$.
\begin{enumerate}
\item (\textbf{Pre-processing --- cloning step}.) \newline
Generate independent matrices $Z', Z'' \sim \mathcal{N}(0, 1)^{\otimes n \times n}$ with independent Gaussian entries.  
Compute the matrices
$$Y^{(1)} = \tfrac{1}{\sqrt{2}} (Y + Z') \quad \text{and} \quad Y^{(2)} = \tfrac{1}{\sqrt{2}} (Y - Z'),$$
$$Y^{(1a)} = \tfrac{1}{\sqrt{2}} (Y^{(1)} + Z'') \quad \text{and} \quad Y^{(1b)} = \tfrac{1}{\sqrt{2}} (Y^{(1)} - Z'').$$
\item  (\textbf{Weak recovery}.)  \newline Run algorithm $A$ on $Y^{(1a)}$ to get output $I\subset [n]$. Let $e_I$ be the vector with $i$-th entry equal to 1 if~$i\in I$ and 0 otherwise.

\item (\textbf{Boosting step}.) \newline Let $v=Y^{(1b)}e_I$, then define $I'=\{ \ell \; : \; v_\ell >  k \lambda/(\log \log k) \}$ to get $I'\subset [n]$. 

\item (\textbf{Check size, sum and trace of returned block}.)  \newline Consider submatrix $M=Y^{(2)}_{I'}$ and set $\lambda'=\lambda/\sqrt{2}$. 
If 
\begin{equation}
\label{eq_checks} 
\big|\, |I'|  - k\, \big|  \leq \sqrt{k  \log k}, \;\;\; {\rm sum}(M)\geq  \tfrac{3}{4} \lambda' k^2 
\;\;\;\mbox{and}\;\;\; {\rm tr}(M) \leq \tfrac{3}{2}\lambda' k,
\end{equation}
then output 0. Otherwise, output 1.
\end{enumerate}
\vspace{1mm}
\end{algbox}
\vspace{-2mm}
\caption{Reduction from weak recovery of a planted dense submatrix (PDS) to testing between one-block and two-block models. See also Fig.~\ref{fig:alg_may_return}. 
}\label{alg:reduction}
\end{figure}

\paragraph{Algorithm Definition.}

Mathematically, the algorithm $B=B_n$ is defined as follows --- see also Fig.~\ref{alg:reduction}. Recall that we have been given $Y$, an $n\times n$ matrix, and that $A_n$ is an algorithm that achieves $\eps$-weak recovery in $\mathbb{Q}=\mathbb{P}_{\rm Gaussian}(n, k,  \tilde{\lambda}, 1)=\mathbb{P}_{\rm Gaussian}(n, k, \lambda/2, 1)$, the one-community model.

First, generate $Z'$ and $Z''$, both $n\times n$ symmetric matrices, by independently sampling $Z_{ij}'\sim \mathcal{N}(0,1)$ for $i\leq j$ and setting $Z'_{ji}=Z_{ij}'$ (and independently generate $Z''$ in the same way).
Then set
\[
Y^{(1)}=\frac{Y+Z'}{\sqrt{2}}, \quad \;\;\;\; Y^{(2)}=\frac{Y-Z'}{\sqrt{2}}, \qquad \text{ and } \qquad Y^{(1a)}=\frac{Y^{(1)}+Z''}{\sqrt{2}}, \quad \;\;\;\; Y^{(1b)}=\frac{Y^{(1)}-Z''}{\sqrt{2}}.
\]
Next, run $A = A_n$ on $Y^{(1a)}$ to produce output $I$. Denote by $e_I$ the binary vector with support $I$. Now, let $v=Y^{(1b)}e_I$ and define $I'$ by thresholding: set $i\in I'$ if $v_i >\lambda k/(\log \log k)$. 
    Then, with $\lambda'=\lambda/\sqrt{2}$,
    \begin{equation}\label{eq:alg_def}
        B = 
        \begin{cases}
        0 & \mbox{ if } \; \big | |I'|  - k \big| \leq \sqrt{k  \log k}, \;\; {\rm sum}(Y^{(2)}_{I'})\geq { \tfrac{3}{4}} \lambda' k^2,  
        \;\; \mbox{and} \;\; {\rm tr}(Y^{(2)}_{I'}) \leq \tfrac{3}{2}\lambda' k,   \\
        1 & \mbox{ otherwise}.
        \end{cases}
    \end{equation}

\begin{figure}
      \begin{center}
            \includegraphics[scale=0.8]{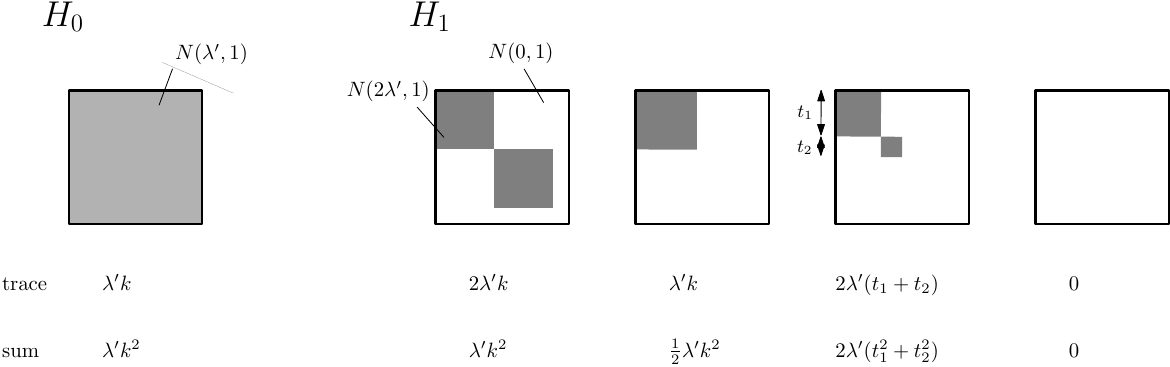}
        \end{center}
    \caption{\small 
    Diagram showing possible returned blocks of the matrix after running the recovery algorithm and boosting. In detail, we depict $Y^{(2)}_{I'}$, i.e.\ the submatrix of $Y^{(2)}$ restricted to the indices $I'$ output by the recovery algorithm $A_n$ run on $Y^{(1a)}$ and boosting on $Y^{(1b)}$ in the proof of Theorem~\ref{thm:reduction_from1vs2}. Here, the shaded regions represent entries~$Y ^{(2)}_{ij}$ where $i,j$ are both in the same planted community. 
    Under $H_0$, we show the recovery algorithm and boosting returns a set $I'$ which is w.h.p.\ exactly the planted structure $S$. Under $H_1$, there are no guarantees on what the algorithm will return, and we present a selection of possibilities: (i)~approximately the entire planted structure (both communities), (ii) one of the two planted communities, (iii) $t_1$ vertices from community one and $t_2$ in community two and (iv) 
    none of the planted structure. For each possibility, we give the expected values (up to leading order) of the trace and sum of the submatrix~$Y^{(2)}_{I'}$. In the proof, we show that we can distinguish between $H_0$ and $H_1$ via the sum and trace of $Y^{(2)}_{I'}$. 
    }
    \label{fig:alg_may_return}
\end{figure}
To show that $B$ distinguishes correctly w.h.p.\ 
requires some auxiliary lemmas which are provided in Section~\ref{sec:clone_tail_bounds}. In particular, we give some known results about cloning from~\cite{BBH-reduction} and 
we also record  
some well-known tail bounds for normal and binomial distributed random variables that we will use in the proof.

\begin{proof}[Proof of Theorem~\ref{thm:reduction_from1vs2}.]
    Let $k$ and $\tilde{\lambda}$ scale with $n$ as prescribed and set $\lambda = 2\tilde{\lambda}$. Also, let $\mathbb{Q}=\mathbb{P}_{\rm Gaussian}(n,k,\lambda, 1)$ and let $\mathbb{P}=\mathbb{P}_{\rm Gaussian}(n,k,\lambda,2)$. Assume for the sake of contradiction that a randomized polynomial-time algorithm $A_n$ achieves weak recovery in $\mathbb{P}_{\rm Gaussian}(n,k,\lambda/2, 1)=\mathbb{P}_{\rm Gaussian}(n,k,\tilde{\lambda}, 1)$. We will show that under this assumption, the algorithm~$B_n$, defined in~\eqref{eq:alg_def}, achieves strong detection between $H_0:$ $Y\sim\mathbb{Q}$ and $H_1:$ $Y\sim\mathbb{P}$, leading to a contradiction.
    
    Recall that to show that $B$ achieves strong detection we must prove that $\mathbb{P}_0(B=1)=o(1)$ and $\mathbb{P}_1(B=0)=o(1)$. In what follows, we call these proofs \textbf{Part 1} and \textbf{Part 2}. 

    \bigskip

    \noindent\textbf{Part 1: proving $\mathbb{P}_0(B=1)=o(1)$.}
    Throughout \textbf{Part 1} we assume the data is generated from $Y=X+Z \sim \mathbb{Q}$, the one-community model. 

    By Lemma~\ref{lem:Gcloning}, $Y^{(1a)}, Y^{(1b)}$ and $Y^{(2)}$ follow the one community model with signal strengths $\tilde{\lambda}=\lambda/2$ for $Y^{(1a)}$ and $Y^{(1b)}$, and $\lambda'=\lambda/\sqrt{2}$ for $Y^{(2)}$, with identical signal locations and independent random noise. Therefore it is equivalent to sample them as follows. 
    
    Construct $X$ in the usual way: i.e.\ independently for each $i\in [n]$, sample the community label $\sigma_i$ such that $\sigma_i=1$ with probability $k/n$ and $\sigma_i= \star$ otherwise. For each $i,j\in [n]$, for $i\leq j$ set $X_{ij}= \lambda \mathbf{1}[\sigma_i=\sigma_j=1]$, and for $i>j$ set $X_{ij}=X_{ji}$. 
    Sample an $n\times n$ matrix $Z^{(1a)}$ in the following way. Independently for each $i,j \in [n]$,  for $i\leq j$ sample $Z^{(1a)}_{ij}\sim \mathcal{N}(0,1)$, and for $i>j$ set $Z^{(1)}_{ij}=Z^{(1)}_{ji}$. Independently sample $Z^{(1b)}$ and $Z^{(2)}$ in the same way.  
    Then set 
    \begin{equation}
    \label{eq:new_Ys}
    Y^{(1a)}=\tfrac{1}{2}X + Z^{(1a)}, \;\;\; \quad Y^{(1b)}=\tfrac{1}{2}X + Z^{(1b)} \;\; \quad \mbox{ and } \quad \;\; Y^{(2)}=\tfrac{1}{\sqrt{2}}X + Z^{(2)}.
    \end{equation}

    Let $I$ be the returned set after running the algorithm $A_n$ on $Y^{(1a)}$, and let $I'$ be the returned set after the boosting step. The important observation is that $I'$ is independent of $Z^{(2)}$. 

    We define a number of events that hold w.h.p.\ and show that if these events occur, then deterministically the algorithm $B$ returns $0$.\\

    \noindent \textbf{Definition of events.}
    Define $E_1$ to be the event that $I'=S_1$, i.e.\ that after the boosting step, the set $I'$ is exactly the planted community, which we have denoted $S_1$. Let $E_2$ be the event that~$ \left \lvert |S_1| - k \right \lvert \leq \sqrt{ k \log k}$.
    The next two events concern the sum and trace of the noise matrix $Z^{(2)}$ induced on the set $I'$. Recall that since $I'$ depends only on $Y^{(1a)}$ and $Y^{(1b)}$, it is independent of $Z^{(2)}$. Let $E_3$ be the event that ${\rm sum}(Z_{I'}^{(2)}) \geq  - 3k \sqrt{\log k}$ 
    and let $E_4$ be the event that ${\rm tr}(Z_{I'}^{(2)}) \leq { 2 \sqrt{ k \log k}}  
    .$ \\

    \noindent \textbf{Claim 1a.}  Under $H_0$, if $E_1, E_2, E_3, E_4$ hold, then the algorithm $B$ outputs 0 for sufficiently large $n$.\\

    \noindent \textbf{Claim 1b}. Under $H_0$, w.h.p.\ the events $E_1$, $E_2$, $E_3$, $E_4$ all hold. \\

    Note that the proof of \textbf{Part 1} follows by these claims which we now prove.

    \bigskip
    \noindent \textbf{Proof of Claim 1a.} To output $B=0$, we need to pass the checks on sum and trace of the block and size of the index set in~\eqref{eq_checks} (see Step 4 of~Fig.~\ref{alg:reduction}): i.e.\ we need that \textbf{(a)} the sum is large, \textbf{(b)} the trace is small and \textbf{(c)} $|I'|\in [k-\sqrt{ k \log k},k+\sqrt{ k \log k}]$. 
    
    Recall that when $E_1$ holds, we have $I'=S_1$. 
    Then since $E_2$ also holds, we have
    \begin{equation}
    \label{eq_I'bound}
    \big||I'| - k \big| =  \big||S_1| - k \big| \leq \sqrt{ k \log k}
    .
     \end{equation}
Notice that the above implies that test \textbf{(c)} is satisfied under $E_1$ and $E_2$.

We will now calculate bounds on the sum and the trace to show that tests \textbf{(a)} and \textbf{(b)} are also passed.
First, from \eqref{eq:new_Ys}, we have ${\rm sum}(Y_{I'}^{(2)}) = \tfrac{1}{\sqrt{2}} {\rm sum}(X_{I'}) + {\rm sum}(Z^{(2)}_{I'})$. Notice that $\tfrac{1}{\sqrt{2}} {\rm sum}(X_{I'}) =\tfrac{1}{\sqrt{2}} \lambda |I'|^2 = \lambda' |I'|^2$, where we recall $\lambda'=\lambda/\sqrt{2}$.  
    Since $E_1$ holds we have $I'=S_1$ and hence from~\eqref{eq_I'bound}, 
    \begin{equation}
    \label{eq:Iprime_bound}
    k - { \sqrt{k\log k}} \leq |I'|   \leq k + { \sqrt{k\log k} } \quad \implies \quad  |I'|^2 = k^2 + {O(k^{3/2}\sqrt{\log k})}.
    \end{equation}
    Therefore, $\tfrac{1}{\sqrt{2}}{\rm sum}(X_{I'}) = \lambda' k^2 + O(\lambda 'k^{3/2}\sqrt{\log k})=\lambda' k^2(1+o(1)).$
    Now, since $E_3$ holds,
    \[ {\rm sum}(Y_{I'}^{(2)}) = \tfrac{1}{\sqrt{2}} {\rm sum}(X_{I'}) + {\rm sum}(Z^{(2)}_{I'})\geq  \lambda' k^2(1+o(1))  - 3k { \sqrt{\log k}} > \tfrac{3}{4}\lambda' k^2,
    \]
    where the last inequality follows since $\sqrt{2}\,\lambda' = \lambda \geq k^{-1/2}\log n$ by assumption; thus we have $3k\sqrt{\log k}=o(\lambda' k^2)$.
    This shows that test \textbf{(a)} is passed, since the requirement in \eqref{eq_checks} is met.

    Similarly, from \eqref{eq:new_Ys}, we have ${\rm tr}(Y_{I'}^{(2)}) = \tfrac{1}{\sqrt{2}} {\rm tr}(X_{I'}) + {\rm tr}(Z^{(2)}_{I'})$.  Since $\lambda'=\lambda/\sqrt{2}$, we find $\tfrac{1}{\sqrt{2}} {\rm tr}(X_{I'}) = \tfrac{1}{\sqrt{2}}  \lambda |I'| =  \lambda' |I'| \leq  \lambda' k(1 + o(1))$ where the inequality follows from \eqref{eq:Iprime_bound}.  Then, since~$E_4$ holds, we have 
    \[ {\rm tr}(Y_{I'}^{(2)}) = \tfrac{1}{\sqrt{2}} {\rm tr}(X_{I'}) + {\rm tr}(Z^{(2)}_{I'}) \leq  \lambda' k(1+o(1))  + 2\sqrt{k  \log k}.\]
    Next, since $ \sqrt{k \log k} \leq {\sqrt{2}}\lambda' k (\sqrt{\log k}/\log n)$ by the assumption that ${ \sqrt{2}}\lambda'\geq k^{-1/2}\log n$, from the above we have
     \[ {\rm tr}(Y_{I'}^{(2)}) \leq  \lambda' k(1+o(1))  + 2\sqrt{k \log k} \leq \lambda' k\left(1+o(1) + \frac{2\sqrt{2\log k}}{\log n}\right)   \leq \tfrac{3}{2} \lambda' k,\]
    where the final inequality holds when $n$ and $k$ are large enough. This shows that test \textbf{(b)} is passed, since the requirement in \eqref{eq_checks} is met.
    Hence, conditional on $E_1, E_2, E_3$ and $E_4$, the interval~$I'$ passes the three checks in~\eqref{eq_checks} and $B$ returns $0$ as required. Therefore, we have proven Claim~1a.\\

    \needspace{4\baselineskip}
    \noindent \textbf{Proof of Claim 1b}.\\

    \noindent\emph{Event $E_1$ holds w.h.p.}
    To prove this, first define $E_0$ to be the event that for some $\eps$ independent of~$n$, both $|S_1 \cap I|\geq \eps k$ and $|I|\leq 1.1k$. Note that by the assumption that $A_n$ achieves weak recovery, $\mathbb{P}_0(E_0)=1-o(1)$. 
    
    Let $v=Y^{(1b)}e_I$. Fix $\ell \in [n]$ and calculate the $\ell$-th entry of vector~$v$ as follows using \eqref{eq:new_Ys} and the definition of $e_I$:
    \begin{eqnarray*}
    v_\ell 
        =  \left[\tfrac{1}{2}Xe_I + Z^{(1b)}e_I \right]_\ell 
         =  \tfrac{1}{2}\sum_{i \in I }X_{\ell i} + \sum_{i \in I }Z^{(1b)}_{\ell i} 
        & = & \begin{cases}
            \tfrac{1}{2}\lambda |I \cap S_1| +  \sum_{i\in I } Z^{(1b)}_{\ell i} & \mbox{if } \ell \in S_1,\\
             \sum_{i \in I } Z^{(1b)}_{\ell i} & \mbox{if } \ell \not\in S_1. \\
        \end{cases}
    \end{eqnarray*}
    Recall that by the design of algorithm $B$, the set $I$ is independent of the noise~$Z^{(1b)}$. Thus, $\sum_{i \in I } Z^{(1b)}_{\ell i}$ has distribution $\mathcal{N}(0, |I|)$ conditional on $I$. 
    Let $\mathcal{E}_\ell$ be the event $|\sum_{i \in I } Z^{(1b)}_{\ell i}|\leq 2\sqrt{k \log n}$.
    Recall that if $E_0$ holds, then $|I|\leq 1.1 k$. 
    Hence, under $E_0$, we apply Lemma~\ref{lem:tail_of_normal_exp} (with 
    $r=\log n+\log \log n$ and $\sigma=\sqrt{1.1k}$, noticing that $\sigma\sqrt{2r}=\sqrt{2.2k (\log n + \log \log n)}\leq 2\sqrt{k\log n}$) to show that w.h.p.\ $\mathcal{E}_\ell$ holds. Indeed,
    \begin{equation}\label{eq:El_v_likely}
    \mathbb{P}_0\left( \mathcal{E}_\ell^c\right)  = \mathbb{P}_0\left( \left|\sum_{i \in I } Z^{(1b)}_{\ell i} \right|\geq 2\sqrt{k \log n}\right) \leq \mathbb{P}_0\left( \left|\mathcal{N}(0, |I|) \right|\geq \sigma \sqrt{2r}\right) \leq 2e^{-r} = \frac{2}{n\log n}.\end{equation}
    Now, if $E_0$ (implying $|I \cap S_1| \geq \eps k$) and $\mathcal{E}_\ell$ hold and $\ell \in S_1$, then for some $\eps>0$ independent of~$n$,
    \[ v_\ell = \tfrac{1}{2}\lambda |I \cap S_1| +  \sum_{i\in I } Z^{(1b)}_{\ell i} \geq \tfrac{1}{2}\lambda \eps k  - 2\sqrt{k\log n}  \geq \frac{\lambda \eps k}{2}  - \frac{2k\lambda}{\sqrt{\log n}} = k\lambda\left(\frac{ \eps}{2}   - \frac{2}{\sqrt{\log n}}\right),\]
    where the second inequality follows as $\lambda \geq k^{-1/2} \log n$. But note that the RHS above is greater than the threshold $k\lambda /(\log \log k)$ for determining $I'$ when $n$ is large (since $k$ grows with $n$). Hence, if $E_0$ and $\mathcal{E}_\ell$ hold, and $\ell \in S_1$ then $\ell \in I'$. Similarly for $\ell\notin S_1$, if $E_0$ and $\mathcal{E}_\ell$ hold, then
    \[ v_\ell =  \sum_{i \in I } Z^{(1b)}_{\ell i} \leq  2\sqrt{k\log n} \leq  2k\lambda/\sqrt{\log n},\]
    and the RHS above is below the threshold $k\lambda /(\log \log k)$, so $\ell \notin I'$. Hence, if $E_0$ and $\mathcal{E}_\ell$ for all $\ell$ hold, then~$I'=S_1$. By~\eqref{eq:El_v_likely}, we may now take a union bound over $\ell \in [n]$, and thus w.h.p.\ $E_0$ and \emph{all} $\mathcal{E}_\ell$ hold and so w.h.p.\ $I' = S_1$. \\

    \noindent\emph{Event $E_2$ holds w.h.p.}   
    The event $E_2$ is an indicator that the single planted community has approximately expected size, i.e.\ that $ \left \lvert |S_1| - k \right \lvert \leq \sqrt{k\log k}$ where $S_1 \subseteq [n]$ is the set of vertices labeled `1', meaning $S_1 = \sigma^{-1}(1)$ 
    for $\sigma$ sampled as above.  Note that $|S_1|\sim {\rm Bin}(n, k/n)$; thus, by Lemma~\ref{lem:conc_for_bins}, 
    \begin{align*}
    \mathbb{P}_0(E_2) &= \mathbb{P}_0\left(\left \lvert |S_1| - k \right\lvert\leq   \sqrt{k \log k}\right) \geq 1 - 2\exp\left\{-\tfrac{1}{3}\log k\right\} = 1 - o(1),
    \end{align*}
    since $k$ is growing with $n$. Notice that by construction, the community assignment is the same for $Y^{(1a)}$, $Y^{(1b)}$ and $Y^{(2)}$. \\
    
    \noindent\emph{Events $E_3$ and $E_4$ hold w.h.p.}
    We have already established $E_1$ and $E_2$ hold w.h.p., and now we show that on $E_1$ and $E_2$, w.h.p.\  $E_3$ and $E_4$ hold as well.

    Next, recall that $I'$ and $Z^{(2)}$ are independent and let $|I'|=t$. Then ${\rm tr}(Z^{(2)}_{I'}) \sim \mathcal{N}(0, t)$ and because the sum of the terms above the diagonal in $Z^{(2)}_{I'}$ is distributed as $\mathcal{N}(0,\binom{t}{2})$, we find ${\rm sum}(Z_{I'}^{(2)}) \sim \mathcal{N}(0, t + 4\binom{t}{2}) = \mathcal{N}(0, 2t^2-t)$. 
    Hence, we may apply Lemma~\ref{lem:tail_of_normal_exp} (with 
    $r=\log k$ and $\sigma =\sqrt{2}t$), to see
    \[ \mathbb{P}_0\left({\rm sum}(Z^{(2)}_{I'}) \geq - 2t\sqrt{\log k} \right) = 1 - \mathbb{P}_0\left(\mathcal{N}(0, 2t^2-t) \geq  2t\sqrt{\log k} \right) \geq 1 - e^{-\log k} = 1 - \tfrac{1}{k}. \]
    Since $E_1$ and $E_2$ hold, 
    we have $t = |I'|=k(1+o(1))$ (see the argument around \eqref{eq_I'bound}). Then w.h.p.\ $E_3$ holds as
    \[ - 2t\sqrt{\log k} =  - 2k(1+o(1))\sqrt{\log k} \geq - 3k \sqrt{ \log k}. \]
    Similarly, since ${\rm tr}(Z^{(2)}_{I'}) \sim \mathcal{N}(0, t)$, by Lemma~\ref{lem:tail_of_normal_exp} (with $r=\log k$ and $\sigma=\sqrt{t}$),
    \[\mathbb{P}_0\left( {\rm tr} (Z_{I'}^{(2)}) \leq \sqrt{2t \log k} \right) = 1 - \mathbb{P}_0\left( \mathcal{N}(0, t) \geq \sqrt{2t \log k} \right) \geq 1- e^{-\log k} = 1 - \tfrac{1}{k}. \]
    Then, as earlier, with $t = |I'|=k(1+o(1))$, we have $\sqrt{2t \log k} \leq 2 \sqrt{k \log k}$; hence, w.h.p.\ $E_4$ holds. 

    This concludes the proof of Claim~1b, and thus we have established Part 1 of the proof.
    \vspace{4mm}

    \noindent\textbf{Part 2:  proving $\mathbb{P}_1(B=0)=o(1)$.}
    Throughout \textbf{Part 2} we assume the data is generated from $Y=X+Z \sim \mathbb{P}$, the two community model. 

    By Lemma~\ref{lem:Gcloning}, $Y^{(1a)}, Y^{(1b)}$ and $Y^{(2)}$ follow the two community model with signal strengths $\tilde{\lambda}=\lambda/2$ for $Y^{(1a)}$ and $Y^{(1b)}$, and $\lambda'=\lambda/\sqrt{2}$ for $Y^{(2)}$, with identical signal locations and independent random noise.
    Equivalently, we may sample $Y^{(1a)}$, $Y^{(1b)}$ and $Y^{(2)}$ as follows. 
    
    Independently for each $i\in [n]$, sample the community label $\sigma_i$ such that $\sigma_i=1$ with probability $k/(2n)$, $\sigma_i=2$ with probability~$k/(2n)$ and $\sigma_i = \star$ otherwise. For each $i,j\in [n]$ with $i\leq j$, set $X_{ij}=2\lambda\mathbf{1}[\sigma_i=\sigma_j=\ell]$ for $\ell\in\{1,2\}$, and for $i>j$ set $X_{ij}=X_{ji}$. 
    Sample an $n\times n$ matrix $Z^{(1a)}$ as in Part 1: independently for each $i,j \in [n]$ with $i\leq j$, sample $Z^{(1a)}_{ij}\sim \mathcal{N}(0,1)$ 
    and for $i>j$, set $Z^{(1)}_{ij}=Z^{(1)}_{ji}$. Independently sample $Z^{(1b)}$ and $Z^{(2)}$ in the same way.  
    Then set \begin{equation}
    \label{eq:clones}
    Y^{(1a)}=\tfrac{1}{2}X + Z^{(1a)}, \;\;\; Y^{(1b)}=\tfrac{1}{2}X + Z^{(1b)} \;\;\mbox{ and }\;\; Y^{(2)}=\tfrac{1}{\sqrt{2}}X + Z^{(2)}.
    \end{equation}
    
    The proof in \textbf{Part 2} is a bit more subtle than the proof in \textbf{Part 1} since we have no guarantees on which set $I$ the algorithm~$A$ will return when given $Y^{(1)}$ with \emph{two} planted communities --- and therefore no knowledge of what set $I'$ will be returned after boosting; see~Figure~\ref{fig:alg_may_return}.

    Notice that if $||I'|-k|>\sqrt{k \log k}$, then the algorithm $B$ returns 1 and we would be done; hence, we assume that $||I'|-k|\leq \sqrt{k \log k}$ throughout.\\ 

    \noindent \textbf{Definition of events.}
    We let $E_1$ be the event that the planted communities have roughly their expected size.    Namely, for $i=1,2$, let $S_i \subseteq [n]$ be the set of vertices labeled `$i$', i.e.\ $S_i = \sigma^{-1}(i)$. Then we let $E_1$ be the event that $|S_1|\leq k/2 + \sqrt{k  \log k} \text{ and } |S_2| \leq k/2 + \sqrt{k  \log k}$. 
    The events~$E_2$, $E_3$ concern the noise in $Z^{(2)}$ for the returned block~$I'$. 
    Define $E_2$ as the event that the sum of the noise in the block is small: ${\rm sum}(Z^{(2)}_{I'}) < { 3k \sqrt{\log k}},$ and  define $E_3$ as the event that the trace of the noise in the block is not too small: ${\rm tr}(Z^{(2)}_{I'}) > - { 2\sqrt{k \log k}}.$

    \bigskip

    \noindent \textbf{Claim 2a.} \emph{Under $H_1$, if $E_1, E_2$ and $E_3$ hold, then the algorithm $B$ outputs 1 for sufficiently large $n$.}\\

    \noindent \textbf{Claim 2b.} \emph{Under $H_1$, w.h.p.\ the events $E_1, E_2, E_3$ all hold for sufficiently large $n$.}\\

    \noindent \textbf{Proof of Claim 2a.} We give a proof by contradiction. Assume that $E_1, E_2$ and $E_3$ all hold and that the algorithm $B$ outputs $B=0$.

    We will see that since $E_2$ and $E_3$ hold, we can find bounds on the sum and trace of~$Y_{I'}^{(2)}$ in terms of the number of vertices of each planted community in~$I'$. In particular, let $t_1 =t_1(I') = |S_1 \cap I'|$ and let $t_2=t_2(I')=|S_2 \cap I'|$. See also Figure~\ref{fig:alg_may_return}. First, considering the diagonal elements of $X_{I'}$, we have that an element equals zero if it is in $I'$ but not in $S_1$ or $S_2$ and equals $2\lambda$ if it is in $S_1$ or $S_2$.  Hence, ${\rm tr}(X_{I'}) = 2 \lambda(t_1 + t_2)$. For off-diagonal elements $[X_{I'}]_{i,j}$, these will equal $2\lambda$ only if both $i$ and $j$ belong to the same community. Hence, ${\rm sum}(X_{I'}) = 2 \lambda(t_1 + t_2) + 2 \times 2 \lambda(\binom{t_1}{2} + \binom{t_2}{2}) = 2 \lambda(t_1^2 + t_2^2)$.

    Now, notice from \eqref{eq:clones}, the fact that $\lambda' = \lambda/\sqrt{2}$, the argument above for ${\rm sum}(X_{I'})$ and $E_2$ that
    \begin{equation}
    \label{eq:sum_upper_bound}
    {\rm sum}(Y^{(2)}_{I'}) = \frac{1}{\sqrt{2}}{\rm sum}(X_{I'}) + {\rm sum}(Z^{(2)}_{I'}) \leq  2\lambda' (t_1^2 + t_2^2)  + { 3k\sqrt{\log k}}.
    \end{equation}
    Similarly, notice from \eqref{eq:clones}, the argument above for ${\rm tr}(X_{I'})$ and $E_3$ that
    \begin{equation}\label{eq:trace_lower_bound}
    {\rm tr}(Y^{(2)}_{I'}) =  \frac{1}{\sqrt{2}}{\rm tr}(X_{I'}) + {\rm tr}(Z^{(2)}_{I'}) = 2\lambda'(t_1+t_2) + {\rm tr}(Z^{(2)}_{I'})  \geq 2\lambda' (t_1 + t_2) - {2\sqrt{k\log k}}.
    \end{equation}
    Since $\sqrt{2}\lambda' \geq k^{-1/2} \log n$ by assumption we have,
    \begin{equation}\label{eq:trace_lower_bound_2}
    {\rm tr}(Y^{(2)}_{I'}) \geq 2\lambda' (t_1 + t_2) - { 2\sqrt{k\log k}} \geq 2\lambda' (t_1 + t_2) -  2\sqrt{2}\,\lambda' k \frac{\sqrt{\log k}}{ \log n} { \; = 2\lambda'(t_1+t_2) + o(\lambda' k)},
    \end{equation}
    where the last equality follows since $\sqrt{\log k}=o(\log n)$.

    In the next part of the proof, we show that since we assumed $B$ outputs 0, the trace of~$Y_{I'}^{(2)}$ must be small; thus, $t_1+t_2$ is also small from \eqref{eq:trace_lower_bound_2}. Then we show that  $t_1+t_2$ being small implies that $Y^{(2)}_{I'}$ will have small sum using \eqref{eq:sum_upper_bound}. This will be the contradiction -- we necessarily fail the sum test and the algorithm outputs 1.

    Now, since we have assumed that $B$ output 0, using the checks  in~\eqref{eq_checks}, the following must hold:
    \begin{equation}\label{eq:fool_B_tr_test}
        {\rm tr}(Y_{I'})\leq \tfrac{3}{2}\lambda' k,
    \end{equation}
    Then by \eqref{eq:trace_lower_bound_2} and~\eqref{eq:fool_B_tr_test}, we may deduce that
    \begin{equation}\label{eq:gotta_be_close_to_planted_v1}
    \tfrac{3}{2}\lambda' k \geq 2\lambda' (t_1 + t_2) { +o(\lambda' k)} \quad \implies \quad   t_1 + t_2  \leq  \tfrac{3}{4}k + { o(k)}.
    \end{equation}
    Next, note that since $E_1$ holds and each $t_i$ is  the cardinality of $S_i \cap I' \subseteq S_i$, then we know that both $t_1$ and $t_2$ are upper bounded by $k/2 + \sqrt{k  \log k}$. Also, recall that for any $y_1, y_2$ with $0 \leq  y_1, y_2 \leq \eta$ we have $y_1^2+y^2_2 \leq \eta^2 + (y_1+y_2-\eta)^2$.
    Hence, for large enough $n$ (since $k$ grows with $n$) 
    \begin{align*}
    t_1^2+t_2^2 &\leq (\tfrac{1}{2}k+\sqrt{k  \log k})^2+ (t_1 + t_2 - \tfrac{1}{2}k-\sqrt{k \log k})^2  \\
    &\overset{(a)}{\leq} (\tfrac{1}{2}k+\sqrt{k  \log k})^2+ (\tfrac{3}{4}k + { o(k)}  - \tfrac{1}{2}k-\sqrt{k \log k})^2 \\
    &\leq (\tfrac{1}{2}k+{ o(k)})^2+ (\tfrac{1}{4}k + { o(k)} )^2  \\
    & < \tfrac{1}{3}k^2
    \end{align*}
    where step $(a)$ follows from \eqref{eq:gotta_be_close_to_planted_v1}. 
    Thus by~\eqref{eq:sum_upper_bound}, for large enough $n$,
    \begin{equation}\label{eq:nearly_contradiction}
    \notag {\rm sum}(Y^{(2)}_{I'}) < \tfrac{2}{3}\lambda' k^2  + {  3k\sqrt{\log k}}.
    \end{equation}
    Recalling that $\sqrt{2}\lambda'\geq k^{-1/2}\log n$,
    \begin{equation}\label{eq:contradiction}
    \notag {\rm sum}(Y^{(2)}_{I'}) < \tfrac{2}{3}\lambda' k^2  +  3\lambda' k^{3/2} \sqrt{2\log k} \log^{-1} n { \; < \tfrac{3}{4}\lambda' k^2},
    \end{equation}
    for large enough $n$. Therefore, inspecting the definition in~\eqref{eq:alg_def}, $I'$ fails the sum check and thus $B$ returns `1', a contradiction, and we have proven Claim~2a.\\

    \noindent\textbf{Proof of Claim 2b.}\\

    \noindent\emph{Event $E_1$ holds w.h.p.} Notice that $|S_1|, |S_2| \sim {\rm Bin}(n, k/(2n))$; thus, by Lemma~\ref{lem:conc_for_bins}, w.h.p.\ $|S_1| \leq k/2 + \sqrt{k  \log k}$ for large~$k$, and similarly for $S_2$. Hence, $\mathbb{P}_1(E_1)=1-o(1)$. \\

    \noindent\emph{Events $E_2$ and $E_3$ hold w.h.p.}
    Let $t=|I'|$ then, as in \textbf{Part 1} of this proof, since $Z^{(2)}$ is independent of $I'$, ${\rm sum}(Z_{I'}^{(2)})\sim \mathcal{N}(0, 2t^2 - t)$ and ${\rm tr}(Z_{I'}^{(2)})\sim \mathcal{N}(0, t)$ where $t=|I'|$. Also, we may assume that $t\in k\pm \sqrt{k   \log k}$. Thus, the events $E_2$ and $E_3$ hold by applying Lemma~\ref{lem:tail_of_normal_exp}, as in \textbf{Part 1} of this proof. \\

    This completes the proof of Claim~2b, which concludes the proof of the theorem.\end{proof}

    \subsection{Lemmas used to prove Theorem~\ref{thm:reduction_from1vs2}}\label{sec:clone_tail_bounds}

    For the Gaussian cloning trick, we note Lemma 10.2 of~\cite{BBH-reduction}, see also Figure 23 of that paper. Following notation of that paper, given a distribution $\PP$, we denote by $\PP^{\otimes n}$ (respectively $\PP^{\otimes n \times n}$) the distribution $(X_1, \ldots, X_n)$ (respectively the distribution on $n\times n$ matrix) where the $X_i$ are i.i.d.\ and $X_i \sim \PP$. We write $\overset{d}{=}$ to denote that the two sides are equal in distribution.
    \begin{lemma}[Gaussian cloning]\label{lem:Gcloning}
    Given a random matrix $M \overset{d}{=} A + \mathcal{N}(0, 1)^{\otimes n \times n}$ for any fixed matrix $A \in \mathbb{R}^{n\times n}$, independently sample $G \sim \mathcal{N}(0, 1)^{\otimes n \times n}$ and set $M_1=(M+G)/\sqrt{2}$ and $M_2=(M-G)/\sqrt{2}$. Then $M_1$ and $M_2$ are independent, both $\overset{d}{=} \frac{1}{\sqrt{2}}A + \mathcal{N}(0, 1)^{\otimes n \times n}$.
    \end{lemma}
    The following standard result will help us prove bounds on the noise in the returned block, i.e.\ for $Z^{(1b)}_I$ and $Z^{(2)}_{I'}$. 
    \begin{lemma}\label{lem:tail_of_normal_exp} Let $X$ be a random variable that is distributed as $\mathcal{N}(0, \tilde{\sigma}^2)$ for some $\tilde{\sigma} \leq \sigma$. Then \[ \mathbb{P}\big(X \geq\sigma \sqrt{2 r} \; \big) \leq e^{-r}.\]
    \end{lemma}
    Finally, we use a tail bound for binomial random variables that follows, for example, from Theorems~2.1 and 2.8 of \cite{JLRbook}. 
    \begin{lemma}\label{lem:conc_for_bins}
    Let random variables $X_1, \ldots, X_m$ be independent, with $0 \leq  X_j \leq 1$ for each $j$. Let $S = \sum_{j=1}^{m} X_j$ and $\mu = \E(S)$. Then, for $0 < x \leq \sqrt{\mu}$, 
    \[ \mathbb{P}(|S-\mu| \geq x \sqrt{\mu}) \leq 2 e^{-\frac13 x^2}.
    \]
    \end{lemma}

    \section*{Acknowledgements} 
    This work began when the authors were visiting the Simons Institute for the Theory of Computing during the program on Computational Complexity of Statistical Inference. We are grateful to Guy Bresler for helpful discussions.

    \typeout{} 

    \bibliographystyle{alpha}
    \bibliography{main}

    \end{document}